\newtheorem{theorem}{Theorem}
 \newtheorem{lemma}{Lemma}
 \newdefinition{remark}{Remark}
 \newdefinition{definition}{Definition}
\newproof{proof}{Proof}
\newtheorem{example}{Example}%
\newtheorem{Assumption}{Assumption}%
\journal{Stochastic Processes and their Applications}
\begin{document}

\begin{frontmatter}



\title{Pathwise convergence of  a novel  numerical scheme  based  on semi-implicit method for stochastic differential-algebraic equations with  non-global Lipschitz coefficients}


\author[gt]{Guy Tsafack} 
\ead{guy.tsafack@hvl.no (guytsafack4@gmail.com)}
\address[gt]{Department of Computing Mathematics and Physics,  Western Norway University of Applied Sciences, Inndalsveien 28, 5063 Bergen, Norway.}
\author[gt,at1]{Antoine Tambue} 
\ead{antoine.tambue@hvl.no (antonio@aims.ac.za)}
\address[at1]{The Department of Mathematics `\& Applied Mathematics, University of Cape Town,Private Bag, 7701
Rondebosch, Cape Town, South Africa }

\begin{abstract}
This paper delves into the well-posedness and the numerical approximation of non-autonomous stochastic differential algebraic equations (SDAEs) with nonlinear local Lipschitz coefficients  that satisfy the more general monotonicity condition called Khasminskii condition. The key challenge is the presence of a singular matrix which makes the numerical integration hard and heavy. To address this challenge, we propose a novel numerical scheme based on semi-implicit method for the drift component of the SDAEs.  More precisely we split the drift term as the sum of a linear term and a nonlinear term. The linear part is approximated implicitly, while the nonlinear part is approximated explicitly. The linear component's role is to handle the singularity issues during the numerical integration without  the resolution of nonlinear algebraic equations in the constraint equations.  This novel scheme is therefore very efficient   for  SDAEs in high dimension that come after the spatial discretisation of  stochastic  partial differential algebraic equations (SPDAEs).  To prove the pathwise convergence  of our novel scheme, we first  derive a equivalent scheme called dual scheme,  suitable for mathematical analysis and  linked to the inherent stochastic differential equation resulting from the elimination of constraints in the initial SDAEs.  We  prove  that  our novel  scheme  converges  to the exact solution with rate  $\frac{1}{2}-\epsilon$, for arbitrary $\epsilon>0$ in the pathwise sense.  Numerical simulations are performed to demonstrate  the efficiency of the scheme in high dimension and to  show that our theoretical results are in agreement with numerical experiments.
\end{abstract}



\begin{keyword}


 Non-autonomous stochastic differential-algebraic equations\sep nonlinear and locally Lipschitz coefficients\sep index-1 differential-algebraic equations \sep Pathwise convergence.
\end{keyword}

\end{frontmatter}



\section{Introduction}\label{sec1}
We are investigating the numerical approximation of the following stochastic differential algebraic equation
\begin{equation}\label{equa1a}
	A(t)dY(t)=\left[B(t)Y(t)+f(t,Y(t))\right]dt + g(t,Y(t))dW(t), \quad Y(0)=\zeta, \quad  t\in \left[0,T\right].
\end{equation}

Here $T>0,\quad T\ne \infty$,  A, B : $ \left[0,T\right] \to \mathcal{M}_{d\times d}(\mathbb{R})$ are continuously differentiable and $ A(t)$ is a singular matrix for all $t\in \left[0,T\right]  $ and $\zeta\in \mathbb{R}^d$.  
The functions  $f:\left[0,T\right]\times D \to \mathbb{R}^d  $ (drift) and $g:\left[0,T\right]\times D\to \mathbb{R}^{d\times d_1} $ (diffusion)  are Borel functions on  $\left[0,T\right]\times D$ with  $D\subseteq\mathbb{R}^d$. 
The process $W(\cdot)$ is a $d_1$-dimensional  Wiener process defined on the probability space $(\Omega, \mathcal{F},\mathbb{P})$ with the natural filtration  $\left( \mathcal{F}_t\right)_{t\geq 0} $. 
The unknown function  $Y(\cdot)$ is a vector-valued stochastic process of dimension  $d$ and depends both on time $ t\in \left[0,T \right] $ and the sample $\omega \in \Omega$. To ease the notation, the argument $\omega$ is omitted. 
Our assumptions  on \eqref{equa1a} will  allow  the solution $Y(t)$ to never leave $D$, therefore the values of $\mu=B +f$ and $g$ outside $D$ are irrelevant and just for convenience we define $\mu(t,Y(t))=B(t)Y(t)+f(t,Y(t))=0_{d}$\footnote{null vector in $\mathbb{R}^d$ } and $ g(t,Y(t))=0_{d\times d_1}$\footnote{ null in matrix  in $\mathbb{R}^{d \times d_1}$ } for $ Y \notin   D, \;\; t \in \left[0,T\right]$.

Note that if the matrix $ A(t)$ is  inversible, we have the standard  Stochastic differential equations (SDEs). For  SDEs,  It is well known that   for smooth  and bounded coefficients  in $\mathbb{R}^d$, the convergence in the $p$th mean  norm  with $p \geq 1$ \footnote{the strong convergence} with order $\alpha$  implies the pathwise convergence  with order $\alpha-\epsilon$ for arbitrary $\epsilon>0$  \cite{Koeden}. Indeed  the  theory of random dynamical systems is of pathwise nature  and the $p$th mean  norm  of the solution $p \geq 1$ do not always exist \cite{Jentzen}, so the strong and weak convergences  cannot be studied  in the $p$th mean  norm  setting. 
  Pathwise convergence is an alternative to the strong and weak convergences  in  the literature. It arises naturally in many important applications
    \cite {Jentzen,Koeden},  in addition  numerical calculations are  always carried out pathwise. 
The resolution of the simple differential algebraic equations (DAEs) is not easy due to the singularity of the matrix $A(\cdot)$ \cite{gear1988differential,rabier2002theoretical,kunkel2006differential}. The problems become more challenging when we add  noise to the  DAEs. This combination is well known as stochastic differential algebraic equations (SDAEs) \cite{denk2008efficient, riaza2008differential}.  Many researchers have been interested in analyzing this type of equation with index-1 and constant $A(t)=A$. For example, under the global Lipschitz conditions, the strong convergence for numerical methods applied to stochastic differential algebraic equations (SDAEs) with index-1 was established in \cite{Renate2003, kupper2012runge,kupper2015stability}.
The basic idea used in those papers was to transform the SDAEs into the SDEs and some constraints. Using some well-known methods in the case of SDEs, they could generalize in the case of SDAEs to obtain the convergence results \footnote{in the means-square sense}.  Clearly, in the high dimension, this transformation can  time consuming due to the number of equations. This is for example for  SDAEs that come after the spatial discretisation of  stochastic  partial differential algebraic equations (SPDAEs). Moreover, many important SDAEs models satisfy only the local Lipschitz conditions \cite{serea2025existence,Mao2007}. As we have mentioned for SDEs,  the $p$th mean  norm  of the solution $p \geq 1$ do not always exist, so pathwise convergence will be the  only alternative. To the  best of  our knowledge, pathwise convergence for  general SDAEs  given  in \eqref{equa1a} have been lacked in the literature.
 
%

%
The existence and uniqueness of the solution for SDAEs   \eqref{equa1a}  under the local Lipschitz condition and monotone conditions have been investigated in \cite{serea2025existence}. Unfortunately, the classical  monotonicity condition is  just a special case of the Khasminskii condition \cite{Khasminskii, luo2011generalised}. Indeed the Khasminskii condition is a more general requirement, especially when dealing with local Lipschitz conditions to ensure the existence of a continuous-time solution or process $Y(\cdot)$ \cite{luo2011generalised}. 

 This  paper has two contributions, the well-posedness of SDAEs under Khasminskii condition and the rigorous mathematical study of pathwise  convergence in the context of SDAEs with local Lipschitz conditions.   Indeed, the presence of singular matrices in these equations poses considerable challenges for both theoretical analysis and simulation methods.  Firstly, we establish the existence and uniqueness of the solution for SDAEs under the local Lipschitz and the Khasminskii conditions, and secondly, we develop  a semi-implicit Euler scheme for numerical approximation of  SDAEs and provide the pathwise  convergence analysis of the scheme.   It is important to mention  that the scheme can fail to convergence strongly \cite{HUT} because of the non global Lipschitz conditions of the coefficients, so the pathwise  convergence is the only alternative. 
 
 For the well-posedness of  SDAEs  \eqref{equa1a}, we demonstrate that the coefficients of the corresponding SDEs obtained after transforming the initial SDAEs also satisfy the Khasminskii conditions, and use   well known results for  SDEs  in  \cite{Khasminskii, luo2011generalised}.
For numerical integration, we build a numerical scheme based on  semi implicit method  and  transform that  semi-implicit  scheme for SDAEs  into a equivalent scheme called  dual semi-implicit  for  the corresponding  inherent  SDEs by eliminating the constraints. The equivalent  dual scheme is then used  for the mathematical analysis and the initial semi-implicit scheme  for SDAEs is used  numerical simulations. The advantage of this method is that  to simulate,  there is no need to solve systems of nonlinear algebraic equations at every time step as  \cite{Renate2003, kupper2015stability, kupper2012runge}, which is very costly, mostly in high dimension. 

This paper is organized as follows. In the second part, we present some definitions and the theorem of the existence and uniqueness of solutions for SDAEs under local Lipschitz and Khasminskii conditions. 
The third part of this paper focuses on the numerical method.
We develop  our  semi-implicit Euler scheme for the numerical approximation of  SDAEs and provide a pathwise convergence analysis.  Finally, we present some simulation results in order to motivate  and  check our theoretical results.

\section{Definitions and well-posedness of the SDAEs}
The goal here is to provide keys definitions and notations that will be used in this work along with the well-posedness result of    SDAEs \eqref{equa1a}  under Khasminskii condition. \\
Here,  we will consider $D=\mathbb{R}^d$ and  the drift part will be denoted by $$\mu(t,Y(t))=B(t)Y(t)+f(t,Y(t)).$$
The well-posedness for general domain  $D\subseteq\mathbb{R}^d$ can easily be updated by following \cite[Theorem 3.5]{Khasminskii}.

\subsection{Definitions and  Notations}
\begin{definition}\label{def2b}
	A strong solution of \eqref{equa1a}  is a process $Y(\cdot) = (Y(t))_{t\in \left[0,T \right] }$ with continuous sample paths that respects the following conditions:\\
	\begin{itemize}
		\item [(i)]  $Y(\cdot)$ is adapted to the filtration $\left\lbrace \mathcal{F}_t\right\rbrace_{t\in \left[0,T\right] } $,\\
		\item [(ii)] $\int_{0}^t|\mu_i(s,Y(s))|ds< \infty$ a.s, for all   $ i=1,..., d$,  $t\in \left[0,T\right]$,\\
		\item[ (iii)] $\int_{0}^tg_{ij}^2(s,Y(s))dW_j(s)< \infty$ a.s, for all  $i=1,...,d; j=1,...,d_1$, $t\in \left[0,T\right]$,\\
		\item[(iv)]  $Y(\cdot) = (Y(t))_{t\in \left[0,T \right] }$ satisfies the following equation
		\begin{eqnarray}
			\label{reps}
			A(t)Y(t)=A(0)Y_0+\int_0^tA'(s)Y(s)+\mu(s, Y(s))ds+\int_0^tg(s, Y(s))dW(s).
		\end{eqnarray} 
	\end{itemize}		
\end{definition} 
Note that for having the representation \eqref{reps}, we have  applied  the It\^o lemma  to the  $k(t,Y)=A(t)Y$. This means that $dk(t,Y)=A'(t)Ydt+A(t)dY$.
\newline\\
\begin{definition}\label{def2b}
	The SDAEs (\ref{equa1a}) is said to be of index-1  if the noise sources do not appear in the constraints, and the constraints (AEs) are globally uniquely solvable. 
\end{definition}  A suitable hypothesis for having this assumption is, for example, that 
$$ \text{Im} g(t, Y)\subseteq \text{Im}A(t),\;\;\;  \text{for all }\;\;\; Y\in \mathbb{R}^d\;\;\; ,  t\in \left[0, T\right]. $$
We denote by $$J(t,Y)=A(t)+R(t)\mu_{Y}(t,Y),\;\;  t\in \left[0,T\right],\;\;\;\; Y\in \mathbb{R}^d.$$
Observe that $J(\cdot,\cdot)$ is the Jacobian matrix   of the constraint  $(\ref{equa5A})$ 
where $R(t)$ is the projector matrix along $\text{Im} A(t)$ with $A(t)R(t)=0, ~t\in \left[0,T\right] $ \footnote{see \cite{Renate2003} for more explanation}.
\newline\\
\begin{definition} \label{def2a}
	The Jacobian $J(\cdot,\cdot)$ possesses a global inverse if there exists a positive constant $N\neq 0$ such that 
	$$ \det( J(t,Y))= N,\;\;\; \text{ for all}\;\;  Y\in \mathbb{R}^d\;\;\; t\in \left[0, T\right]. $$
\end{definition}

Throughout this work,  $(\Omega, \mathcal{F},\mathbb{P})$ denotes a complete probability space with the natural 
filtration $\left( \mathcal{F}_t\right)_{t\geq 0} $. We define,  $\left\|Y \right\|^2=\sum_{i=1}^{n}\left| Y_i\right|^2  $ , for any vector $Y\in \mathbb{R}^d$ and   $\left| B \right|^2_F=\sum_{i=1}^{d}\sum_{j=1}^{d_1}\left|b_{i,j}\right| ^2 $ is the Frobenius norm, for any matrix $B=(b_{i,j})_{i,j=1}^{d, d_1}$.  We denote also by $\left\|\cdot\right\|_{\infty}$ the supremum norm for continuous functions. Moreover, for $A:t\mapsto  A(t),~t\in \left[ 0,T\right]$, we have  $\left\| A\right\|_{\infty}:=\max_{t\in \left[ 0,T\right]}\left| A(t)\right| _F$. Recall that, $a\lor b=\max(a,b)$ is the maximum between a  and  b,  and  $a\land b=\min(a,b)$ is the minimum between $a$ and $b$, for $a$, $b$ $\in \mathbb{R}$ .

\subsection{Well-posedness of the SDAEs}
Our well-posedness result will be investigated under the following assumptions.
\begin{Assumption}
	\label{assum1}
	We assume that
	\begin{enumerate}
		\item [(A1.1)]
		The functions $\mu(\cdot,\cdot)$ and $g(\cdot,\cdot)$ are locally Lipschitz with respect to $Y$ i.e. for any $q > 0$,   there exists $L_q > 0$, such that:
		\begin{equation}\label{equa4}
			\left\| \mu(t,X)-\mu(t,Y)\right\|\lor	\left| g(t,X)-g(t,Y)\right|_F \leq L_q \left\|Y- X\right\|,~t\in \left[0,T\right]~~~~~~~~~~~~~~~~~~~~
		\end{equation}
		for all $ X, Y\in \mathbb{R}^d$ with $ \left\|X \right\|\lor \left\|Y \right\|<q $.\\
		\item [(A1.2)]  The functions $\mu(\cdot,\cdot)$ and $g(\cdot,\cdot)$ satisfy  the Khasminskii  condition in  \cite[Theorem 3.5]{Khasminskii},  that is   there exists  a non-negative
		function $V(\cdot, \cdot)\in  C^{1;2}(\left[0,T\right]\times \mathbb{R}^d)$ and the  constants $M>0$  such that
		\begin{align}
			LV(t,Y)&\leq MV(t,Y),~t\in \left[0,T\right], Y\in \mathbb{R}^d, \label{eqp1}~~~~~~~~~~~~~~~~~~~~~~~~~~~~~~~~~~~~~~~~~~~~~~~~~~~~\\
			V_q(t)~~~~&=\inf_{\left\| Y\right\|>q } V(t,Y)\to \infty ~~ as~~ q\to \infty,\label{eqp3}
		\end{align}
		for any $q>0$, 
		and $ L$ is the differential operator given by
		$$L=\frac{\partial}{\partial t}+\sum_{i}(A^-(t)\mu(t,Y))_i\frac{\partial}{\partial Y_i}+\frac{1}{2}\sum_{i,j}(A^-(t)g(A^-(t)g)^T)_{ij}(t,Y)\frac{\partial^2}{\partial Y_i \partial Y_j}.~~~~~~~~$$
		Here $A^-(\cdot)$ is the pseudo-inverse matrix associated to the matrix $A(\cdot)$.\\
		
		\item [(A1.3)] The singular matrix $A(t),~t \in \left[0,T \right] $ and the pseudo-inverse matrix $A^-(t) ~ t\in \left[0,T \right]$ are such that the function $P(t)=A^-(t)A(t)$ is differentiable and  satisfy the relation 
		\begin{equation}\label{eqp2}
			(A^-(t)A(t))'= 0_{d\times d},~~t\in \left[ 0,T\right].~~~~~~~~~~~~~~~~~~~~~~~~~~~~~~~~~~~~~~~~~~~~~~~~~~~~~~~~~~~~~~~~~~
		\end{equation}
		Here $ A'(t)$ represents the derivative with respect to the time $t$, and $0_{d\times d}$ is a null matrix of size $d\times d$.
	\end{enumerate}
\end{Assumption}
\begin{remark}
	Note that the Assumption (A1.2) is the general condition of the monotonicity condition used in \cite{serea2025existence}. For example for a  function   $$V(Y,t)=\left\|Y\right\|^2,~t\in [0, T], ~Y\in \mathbb{R}^d.$$ 
	We obtain exactly the same monotone condition in  \cite[Assumption 1 (A1.1) with p=2]{serea2025existence}. 
	Indeed  for $~t\in [0,T], ~Y\in \mathbb{R}^d$, we have $$ \frac{\partial V}{\partial t}(Y,t)=0,~\frac{\partial V}{\partial Y}(Y,t)=2Y,~ \frac{\partial ^2V}{\partial Y_i \partial Y_j}(Y,t)=2I_d,$$ $I_d$ is the identity matrix.
	From (A1.2) and by using  the fact that $P(t)=A^-A(t)$  and  $A^-AA^-=A^-$, we have
	\begin{eqnarray*}
		LV(t,Y)&=&\frac{\partial V(t,Y)}{\partial t}+\sum_{i}(A^-(t)\mu(t,Y))_i\frac{\partial V(t,Y)}{\partial Y_i}\\
        &+&\frac{1}{2}\sum_{i,j}(A^-(t)gA^-(t)g^T)_{ij}(t,Y)
		\frac{\partial ^2V}{\partial Y_i \partial Y_j}(t,Y)\\
		&=&0+\sum_{i}(A^-(t)\mu(t,Y))_i2Y_i+\sum_{i,j}(A^-(t)gA^-(t)g^T)_{ij}(t,Y)\\
		&=&2\left\langle (P(t) Y)^T,A^-(t)\mu(t,Y)\right\rangle +\left\|A^-(t)g(t,Y\right\|^2 \\
		&\leq & MV(t,Y)=M\left\|Y\right\|^2,~t\in [0,T], ~Y\in \mathbb{R}^d.
	\end{eqnarray*}		
	Therefore we have 
	\begin{align}\label{we}
		\left\langle (P(t)Y)^T,A^-(t)\mu(t,Y)\right\rangle +\frac{1}{2}\left\|A^-(t)g(t,Y)\right\|^2\leq & M(1+\left\|Y\right\|^2),~t\in [0,T], ~Y\in \mathbb{R}^d.
	\end{align}
\end{remark} 
\begin{example}
	Let us  consider the functions $$\mu(t,y)=y\ln(y^2+1)\;\text{ and}\; g(t,y)=\sqrt{2}\ln(y^2+1),~y\in \mathbb{R},~ t\in [0,10].$$\\
	We can easily check that the functions $\mu$ and $g$ are locally Lipschitz. But they do not satisfy the monotone condition \eqref{we}. Indeed we have
	
		\begin{align*}
		h(y)=	\frac{\left\langle y,\mu(t,y)\right\rangle +\frac{1}{2}\left\|g(t,y)\right\|^2}{(y^2+1)}&=\frac{y^2\ln(y^2+1) +(\ln(y^2+1))^2}{(y^2+1)}~~~~~~~~~~~~~~~~~~~~~~~~~~\\
		&=\ln(y^2+1)\left(\frac{y^2}{(y^2+1)}+\frac{ln(y^2+1)}{(y^2+1)}\right),~y\in \mathbb{R}.
	\end{align*}
	
	We can  easily prove that $ \underset {y \to +\infty}{\lim}h(y)=+\infty$. 
	
	However, the functions $\mu$ and $g$ satisfy the general conditions defined in Assumption (A1.2) for $V(t,y)=\ln(y^2+1)$.  Indeed we have
		\begin{eqnarray*}
		\frac{\partial V(t,y)}{\partial t }= 0;~~~~	\frac{\partial V(t,y)}{\partial y }=\frac{2y}{y^2+1},\\
		\frac{\partial^2 V(t,y)}{\partial y^2 }=\frac{2(y^2+1)-4y^2}{(y^2+1)^2}=\frac{2(-y^2+1)}{(y^2+1)^2},\;\; y\in \mathbb{R}.
	\end{eqnarray*}
	Here the operator  $L$ is defined  as $$L:=\frac{\partial}{\partial t}+\sum_{i}\mu_i(t,y)\frac{\partial}{\partial y_i}+\frac{1}{2}\sum_{i,j}({gg^T}_{ij}(t,y)\frac{\partial^2}{\partial y_i \partial y_j}.~~~~~~~~~~~~~~~~~~~~~~~~~~~~~~~~$$
	We therefore have 
	\begin{align*}
		LV(t,y)&=  y\ln (y^2+1)\frac{2y}{y^2+1}+(\ln(y^2+1))^2\frac{2(-y^2+1)}{(y^2+1)^2}~~~~~~~~~~~~~~~~~\\
		&\leq \frac{2y^2\ln(y^2+1)}{y^2+1}+\frac{2(\ln(y^2+1))(-y^2+1)}{y^2+1}\\
		&\leq \frac{2y^2\ln(y^2+1)}{y^2+1}+\frac{2\ln(y^2+1)}{y^2+1}\\
		&\leq 2\ln(y^2+1)\frac{^2y+1}{y^2+1}\\
		&\leq 2\ln(y^2+1),\\
		&\leq 2V(t,y),~y\in \mathbb{R}.
	\end{align*}
\end{example}
\begin{remark}\label{rem1}
	Note that when the matrix A($\cdot$) is non-singular the projector $A^-(\cdot)A(\cdot)$ is the identity matrix and  $(A^-(t)A(t))'= 0_{d\times d},~~t\in \left[ 0,T\right] $.
\end{remark}
\begin{example}
	We can present some matrices that satisfy the Assumption (A1.3)
	\begin{itemize}
		\item[a)] Let $A(t)=\begin{pmatrix}
			0& 0  \\
			t^2+1 & 0
		\end{pmatrix}$ then $A^-(t)=\begin{pmatrix}
			0& \frac{1}{t^2+1} \\
			0 & 0
		\end{pmatrix}$ and \\
		$P(t)=A^-(t)A(t)=\begin{pmatrix}
			1 & 0 \\
			0 &0
		\end{pmatrix} $ and $P'(t)=\begin{pmatrix}
			0 & 0 \\
			0 &0
		\end{pmatrix}, t\in \left[ 0,T\right]$
		
		\item[b)] Let $A(t)=\begin{pmatrix}
			-t& 0  \\
			t & 0
		\end{pmatrix}$ then $A^-(t)=\begin{pmatrix}
			-\frac{1}{2t}& \frac{1}{t^2+1} \\
			0 & 0
		\end{pmatrix}$ and  \\
		$P(t)=A^-(t)A(t)=\begin{pmatrix}
			1 & 0 \\
			0 &0
		\end{pmatrix} $ and $P'(t)=\begin{pmatrix}
			0 & 0 \\
			0 &0
		\end{pmatrix}, t\in \left] 0,T\right]$
		\item [c)] Let $A(t)=\begin{pmatrix}
			-t& 0 &0 \\
			0 & 0&0\\
			t & 0&1
		\end{pmatrix}$ then $A^-(t)=\begin{pmatrix}
			-\frac{1}{2t}&0&0 \\
			0 & 0&0\\
			1 & 0&1
		\end{pmatrix}$ and\\
		$P(t)=A^-(t)A(t)=\begin{pmatrix}
			1 & 0&0 \\
			0 &0&0\\
			0 &0&1
		\end{pmatrix} $ and $P'(t)=\begin{pmatrix}
			0 & 0 &0\\
			0 & 0 &0\\
			0 & 0 &0
		\end{pmatrix}, t\in \left] 0,T\right]$.
	\end{itemize}
\end{example}

The first main result of this section is given in the following theorem.
\newline\\
\begin{theorem}\label{theo1} 
	Assume that the equation \eqref{equa1a}  is an index 1 SDAEs and the Jacobian matrix $J(\cdot,\cdot)$   possesses a global inverse for any $Y\in \mathbb{R}^d$. We also assume that the conditions (A1.1), (A1.2), and (A1.3) in Assumption \ref{assum1} are satisfied.
	Then a unique process $Y(\cdot)$  solution of \eqref{equa1a} exists. 
\end{theorem}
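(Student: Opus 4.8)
The plan is to reduce the index-1 SDAE \eqref{equa1a} to its \emph{inherent} ordinary stochastic differential equation and then to invoke the classical Khasminskii existence-and-uniqueness theorem \cite[Theorem 3.5]{Khasminskii} (see also \cite{luo2011generalised}). First I would decouple: multiplying \eqref{equa1a} on the left by the pseudo-inverse $A^-(t)$ and using $A^-(t)A(t)=P(t)$ together with Assumption (A1.3), i.e.\ \eqref{eqp2}, which says that $P$ is a \emph{constant} projector, the product rule gives
\[
d\big(P(t)Y(t)\big)=P(t)\,dY(t)=A^-(t)\mu(t,Y(t))\,dt+A^-(t)g(t,Y(t))\,dW(t).
\]
Set $u(t):=P(t)Y(t)$; since $\text{Im}\,A^-(t)=\text{Im}\,P$, the process $u$ stays in the fixed subspace $\text{Im}\,P$, while the complementary component $(I_d-P(t))Y(t)$ carries no Brownian increment (index-1, with $\text{Im}\,g(t,\cdot)\subseteq\text{Im}\,A(t)$) and is pinned by the algebraic constraint, whose Jacobian with respect to $Y$ is $J(t,\cdot)$.

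Because $\det J(t,Y)\equiv N\neq 0$ on $[0,T]\times\mathbb{R}^d$ by Definition \ref{def2a}, and $A,B,f$ are continuously differentiable, the global implicit function theorem furnishes a map $\Phi(t,u)$, of class $C^{1}$ in $t$ and locally Lipschitz in $u$, with $P\,\Phi(t,u)=u$ and such that every solution of \eqref{equa1a} obeys $Y(t)=\Phi(t,u(t))$. Plugging this into the previous display yields the \emph{inherent SDE}
\[
du(t)=A^-(t)\mu\big(t,\Phi(t,u(t))\big)\,dt+A^-(t)g\big(t,\Phi(t,u(t))\big)\,dW(t),\qquad u(0)=P\zeta .
\]
Its coefficients are locally Lipschitz in $u$ (composition of locally Lipschitz maps, with $A^-(\cdot)$ bounded on the compact $[0,T]$), so (A1.1) transfers. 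Moreover the operator $L$ appearing in (A1.2) is exactly the generator built from the inherent drift $A^-\mu$ and diffusion $A^-g(A^-g)^T$; hence, taking $\widetilde V(t,u):=V(t,\Phi(t,u))\in C^{1,2}$, the Khasminskii inequalities \eqref{eqp1}--\eqref{eqp3} pass to $\widetilde V$ along the inherent SDE (using that $\|u\|$ and $\|\Phi(t,u)\|$ are comparable, uniformly on $[0,T]$). Thus the inherent SDE satisfies all hypotheses of \cite[Theorem 3.5]{Khasminskii}.

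That theorem then gives a unique global strong solution $u(\cdot)$ on $[0,T]$, with continuous paths and no explosion; define $Y(t):=\Phi(t,u(t))$. Adaptedness, local integrability of the drift and diffusion, and the It\^o identity \eqref{reps} that define a strong solution of \eqref{equa1a} follow from the corresponding properties of $u(\cdot)$ and the continuity of $\Phi,\mu,g$: one recovers \eqref{reps} by applying $A(t)$ to $Y(t)=(I_d-P)Y(t)+u(t)$, using the constraint on the first summand and the inherent SDE (together with $P'=0$) on $u$. For uniqueness, any strong solution $\bar Y$ of \eqref{equa1a} projects, by the decoupling step, to a solution $\bar u=P\bar Y$ of the inherent SDE and satisfies the constraint, whence $\bar u=u$ and $\bar Y=\Phi(\cdot,\bar u)=Y$.

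The main obstacle is the rigorous justification of the decoupling for the \emph{non-autonomous} singular pencil $(A(t),B(t))$: one must check that $(A^-A)'=0$ is precisely what makes $u=P(t)Y$ an It\^o process with the stated coefficients despite the extra term $A'(t)Y$ in \eqref{reps}; establish enough \emph{global} regularity and growth control on the constraint map $\Phi$ for the local Lipschitz property to survive the substitution $Y=\Phi(t,u)$; and, most delicately, verify that the Lyapunov inequality \eqref{eqp1} for $V$ really does translate into the corresponding inequality for $\widetilde V$ along the inherent SDE, which is the heart of the non-explosion argument.
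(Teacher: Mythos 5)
Your overall architecture is the same as the paper's: multiply \eqref{equa1a} by the projector $R(t)$ to isolate the constraint \eqref{equa5A}, solve it globally for the algebraic component via the invertibility of $J$ (the paper's implicit function $\hat v$, your $\Phi(t,u)=u+\hat v(t,u)$), multiply by $A^-(t)$ and use $(A^-(t)A(t))'=0$ to obtain the inherent SDE \eqref{equa7A} for $U=P(t)Y$, transfer (A1.1) (the paper's Lemma \ref{lem1}) and (A1.2) (Lemma \ref{lem2}) to the reduced coefficients, and invoke \cite[Theorem 3.5]{Khasminskii}. Your uniqueness argument and your reconstruction of \eqref{reps} from the constraint together with the inherent SDE also match the paper's.

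The genuine gap is exactly the one you flag in your last paragraph: you never establish the Lyapunov inequality \eqref{eqp1} for your candidate $\widetilde V(t,u)=V(t,\Phi(t,u))$, and with that choice it does not follow from $LV\le MV$ by any formal manipulation --- computing the inherent generator applied to $\widetilde V$ forces you to differentiate through $\Phi$, producing terms in $\partial_t\Phi$, $\partial_u\Phi$ and $\partial^2_{uu}\Phi$ that are controlled by no hypothesis of the theorem. Since this inequality \emph{is} the non-explosion argument, the proof is not closed as written. The paper avoids the difficulty by not composing at all: Lemma \ref{lem2} takes $V_1(t,U):=V(t,U)$ unchanged, and then, because (A1.3) kills the $P'$ term in $\hat f$ and the operator $L$ of (A1.2) is already built from the reduced data $A^-\mu$ and $A^-g$, the generator $L_1$ of the inherent SDE applied to $V_1$ is identified with $LV$, giving $L_1V_1\le MV_1$ with the same constant $M$; moreover \eqref{eqp3} for $V_1$ is literally \eqref{eqp3} for $V$, whereas for $\widetilde V$ you additionally need coercivity of $\Phi$, which you only assert via an unproved ``comparability of norms''. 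If you replace $\widetilde V$ by $V$ itself, the obstacle you identify dissolves and your argument coincides with the paper's.
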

For the proof of this theorem, we need some preliminary results.
\subsubsection{Preliminary results for Theorem \ref{theo1}}
This section introduces the key lemmas  to prove the first main result  in Theorem \ref{theo1}.
\begin{lemma}\label{lem1} Let us consider the following functions  $\hat{f}: \left[ 0,T\right]\times\mathbb{R}^d \to \mathbb{R}^d $, and $\hat{g}: \left[ 0,T\right]\times\mathbb{R}^d\to \mathbb{R}^{d\times d_1}$ define by 
	\begin{eqnarray}
		\label{efuns}
		\left\{\begin{array}{l}
		\hat{f}(t,U)= P'(t)(U(t)+\hat{v}(t,U(t)))+A^-(t)\mu(t, U(t)+\hat{v}(t,U(t))) \label{f}\\
		\newline\\
		\hat{g}(t,U)=A^-(t)g(t, U(t)+\hat{v}(t,U(t))),~U\in \mathbb{R}^d, ~t\in \left[ 0,T\right]\label{g},
		 \end{array}\right.
	\end{eqnarray}
	where   $P'(t)$ is the  derivative with respect to the time $t$.
	We assume that the functions $\mu(\cdot,\cdot)$ and $g(\cdot,\cdot)$ satisfied the condition (A1.1) in Assumption 1 and  the function $\hat{v}(\cdot,\cdot)$ is such that the solution given in the Theorem \ref{theo1} is in the form $Y(t)=U(t)+\hat{v}(t,U(t)),~t\in \left[0,T \right] $. Then 
	the functions $\hat{f}(\cdot,\cdot)$ and $\hat{g}(\cdot,\cdot)$ satisfied the locally Lipschtiz condition with respect to  the second  variable $u$.

\end{lemma}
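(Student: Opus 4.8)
The plan is to show the locally Lipschitz property for $\hat{f}$ and $\hat{g}$ by exploiting that each is built by composing and summing maps that are either globally Lipschitz (the constant-in-$u$, time-dependent matrices $P'(t)$, $A^-(t)$, restricted to the compact time interval $[0,T]$) or locally Lipschitz (the coefficients $\mu$, $g$ via (A1.1), and the map $u\mapsto u+\hat v(t,u)$). The key structural observation I would record first is that, because $P'$, $A^-$ are continuous on the compact set $[0,T]$, the quantities $\|P'\|_\infty := \max_{t\in[0,T]}|P'(t)|_F$ and $\|A^-\|_\infty := \max_{t\in[0,T]}|A^-(t)|_F$ are finite; these play the role of uniform-in-$t$ bounds. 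So the whole argument is: locally Lipschitz is preserved under finite sums, under left-multiplication by a uniformly bounded time-dependent matrix, and under composition with a map that is locally Lipschitz and locally bounded.

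The main technical input that must be isolated is a property of $\hat v$. Since $\hat f,\hat g$ involve $\mu,g$ evaluated at $u+\hat v(t,u)$, I need that $u\mapsto u+\hat v(t,u)$ is (i) locally Lipschitz in $u$ uniformly on $[0,T]$, and (ii) locally bounded, i.e. maps balls to balls uniformly in $t$. Property (ii) together with (A1.1) then guarantees that if $\|u_1\|\vee\|u_2\|<q$, the arguments $u_i+\hat v(t,u_i)$ lie in a ball of some radius $q'=q'(q)$, so the local Lipschitz constant $L_{q'}$ of $\mu$ and $g$ applies; property (i) then gives $\|\mu(t,u_1+\hat v(t,u_1))-\mu(t,u_2+\hat v(t,u_2))\|\le L_{q'}\,\mathrm{Lip}_q(u+\hat v)\,\|u_1-u_2\|$, and similarly for $g$ in Frobenius norm. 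I expect the paper intends $\hat v$ to come from the index-1 constraint together with the global invertibility of the Jacobian $J$, so that the implicit function theorem (applied globally using $\det J \equiv N \neq 0$) yields $\hat v(t,\cdot)$ continuously differentiable with derivative controlled by $A^-$, $\mu_Y$ and $J^{-1}$; hence on any ball $\{\|u\|<q\}$ the map $u\mapsto\hat v(t,u)$ has a Lipschitz constant bounded uniformly in $t\in[0,T]$. This is the step I anticipate being the main obstacle: making the regularity and uniform local bounds of $\hat v$ precise, since $\hat v$ is only implicitly defined. I would either cite the index-1 / global-inverse hypotheses of Theorem~\ref{theo1} to assert this, or note that it is established in the construction of the inherent SDE preceding the lemma.

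Granting the $\hat v$ properties, I would then carry out the estimate for $\hat f$ term by term. Fix $q>0$ and $u_1,u_2$ with $\|u_1\|\vee\|u_2\|<q$; write $Y_i=u_i+\hat v(t,u_i)$, and let $q'$ be the radius such that $\|Y_i\|<q'$ for all $t\in[0,T]$. For the first term, $|P'(t)(Y_1-Y_2)|\le \|P'\|_\infty(1+\mathrm{Lip}_q\hat v)\|u_1-u_2\|$. For the second, $\|A^-(t)(\mu(t,Y_1)-\mu(t,Y_2))\|\le \|A^-\|_\infty L_{q'}(1+\mathrm{Lip}_q\hat v)\|u_1-u_2\|$ using (A1.1). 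Summing gives a constant $\hat L_q$ depending only on $q$ (through $q'$, $L_{q'}$, $\mathrm{Lip}_q\hat v$) and on the fixed quantities $\|P'\|_\infty,\|A^-\|_\infty$, establishing local Lipschitzness of $\hat f$. The argument for $\hat g(t,u)=A^-(t)g(t,Y)$ is identical and in fact shorter, since there is no $P'$ term: $|A^-(t)(g(t,Y_1)-g(t,Y_2))|_F\le \|A^-\|_\infty L_{q'}(1+\mathrm{Lip}_q\hat v)\|u_1-u_2\|$. Taking the maximum of the two constants yields a single $L_q$ valid for both $\hat f$ and $\hat g$, which is the assertion of the lemma.
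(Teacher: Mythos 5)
Your proposal is correct, and it is essentially the argument the paper has in mind: the paper itself does not prove this lemma but simply cites Part~2 of the proof of Theorem~1 in the reference \cite{serea2025existence}, where exactly this decomposition (uniform bounds on $P'$ and $A^-$ over the compact interval $[0,T]$, local Lipschitzness of $\mu$ and $g$ from (A1.1), and the Lipschitz/locally bounded property of $u\mapsto u+\hat v(t,u)$ obtained from the index-1 constraint and the global invertibility of the Jacobian $J$ via the implicit function theorem) is carried out. You correctly identify the regularity of the implicitly defined $\hat v$ as the only nontrivial input; the paper treats this as already established in the construction of the inherent SDE (indeed, later in the text the analogous map $\hat v^n$ is asserted to be globally Lipschitz with constant $L_{\hat v}$ by the same reference), so your reconstruction supplies precisely the details the paper delegates. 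The only point worth making explicit is that the uniform-in-$t$ local boundedness of $\hat v$ (your property (ii)) follows from the Lipschitz bound together with the continuity of $t\mapsto\hat v(t,0)$ on the compact interval $[0,T]$.
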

\begin{proof}
	We refer the readers to Part 2 in the proof of  \cite[ Theorem 1]{serea2025existence}.
\end{proof}
\begin{lemma}\label{lem2}
	Let us reconsider the functions $\hat{f}(\cdot, \cdot)$ and $\hat{g}(\cdot, \cdot)$ defined in \eqref{efuns}.  We assume that  the condition (A1.3) in Assumption \ref{assum1} is satisfied.    If   the functions $\mu(\cdot,\cdot)$ and $g(\cdot,\cdot)$ satisfied the condition (A1.2) in Assumption \ref{assum1},  then there exists  a non-negative function $V_1(\cdot,\cdot)\in \mathcal{C}^{1,2}(\left[ 0,T\right]\times \mathbb{	R}^d )$ and a positive constant $M_1$ such that
	\begin{eqnarray}
	\left\{\begin{array}{l}
		L_1V_1(t,U)\leq M_1 V_1(t,U), ~U\in \mathbb{R}^d, t\in \left[0,T \right]\\
		\newline\\
		V_{1_{q}}(t)=\inf_{\left\| U\right\| >q}V_1(t,U)\to \infty ~as ~~q\to \infty.
	\end{array}\right.
	\end{eqnarray} 
	for any $q>0$ such that $\left\| U\right\|<q ,~U\in \mathbb{	R}^d,~~ and ~~ t\in \left[0,T \right]$. The differential  operator  $L_1$ being  defined by
	$$L_1:=\frac{\partial}{\partial t}+\sum_{i}\hat{f}_i(t,U)\frac{\partial}{\partial U_i}+\frac{1}{2}\sum_{i,j}(\hat{g}\hat{g}^T)_{ij}(t,U)\frac{\partial^2}{\partial U_i \partial U_j}.~~~~~~~~~~~~~~~~~~~~~~~~~~~~~~~~~~~$$
\end{lemma}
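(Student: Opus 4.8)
The plan is to transfer the Khasminskii structure available for $\mu,g$ under the operator $L$ (condition (A1.2)) into the analogous structure for $\hat f,\hat g$ under $L_1$, using the substitution $Y(t)=U(t)+\hat v(t,U(t))$ that links the inherent SDE to the original SDAE. The natural candidate is to set $V_1(t,U):=V(t,\,U+\hat v(t,U))$, where $V$ is the Lyapunov-type function from (A1.2). First I would check that $V_1\in\mathcal{C}^{1,2}$: since $V\in\mathcal{C}^{1,2}([0,T]\times\mathbb{R}^d)$ and (by Lemma 1 / the construction in \cite{serea2025existence}) $\hat v(\cdot,\cdot)$ inherits enough smoothness from the implicit-function construction applied to the constraint equation (the Jacobian $J$ being globally invertible), the composition is $\mathcal{C}^{1,2}$; non-negativity is immediate because $V\ge 0$.

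Next I would verify the coercivity condition $V_{1_q}(t)\to\infty$. This uses \eqref{eqp3} together with a bound of the form $\|\hat v(t,U)\|\le C(1+\|U\|)$ with $C<1$ in the leading order, or more simply the fact that $U\mapsto U+\hat v(t,U)$ is a homeomorphism of $\mathbb{R}^d$ that is proper (norms of $U$ and of $U+\hat v(t,U)$ are comparable up to additive/multiplicative constants, uniformly in $t$, from the structure of the constraint). Hence $\|U\|>q$ forces $\|U+\hat v(t,U)\|>\psi(q)$ for some $\psi(q)\to\infty$, and so $\inf_{\|U\|>q}V_1(t,U)\ge V_{\psi(q)}(t)\to\infty$.

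The main work is the drift inequality $L_1V_1(t,U)\le M_1 V_1(t,U)$. Here I would compute $L_1V_1$ via the chain rule for the composition $V(t,U+\hat v(t,U))$. Writing $Y=U+\hat v(t,U)$, the first- and second-order $U$-derivatives of $V_1$ produce the Jacobian $D_U Y = I + \partial_U\hat v$ and the relevant Hessian terms, and the point is that the coefficients $\hat f,\hat g$ in \eqref{efuns} were designed precisely so that, after applying $D_U Y$, the diffusion coefficient $\hat g\hat g^T$ transforms into (a piece of) $A^-g(A^-g)^T$ evaluated at $Y$, and the drift $\hat f$ reproduces $\partial_t(\hat v)$-type corrections plus $A^-\mu(t,Y)$; combined with the identity $(A^-(t)A(t))'=0$ from (A1.3), which kills the spurious terms coming from differentiating the projector $P(t)=A^-(t)A(t)$, the whole expression collapses to $(LV)(t,Y)$ possibly up to a linear-in-$V$ remainder controlled on the range of the map. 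Then \eqref{eqp1} gives $L_1V_1(t,U)\le M V_1(t,U)$ plus lower-order terms absorbed into a new constant $M_1$.

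The hard part will be this last computation: carefully matching the chain-rule expansion of $L_1V_1$ with $(LV)\circ(\mathrm{id}+\hat v)$ and showing that all the extra terms — those involving $\partial_U\hat v$, $\partial_t\hat v$, and the derivative of the projector — either cancel (via (A1.3)) or are dominated by a constant multiple of $V_1$ using the local Lipschitz bounds from Lemma 1 and the at-most-linear growth of $\hat v$. I would likely organize this by first treating the non-singular case (Remark \ref{rem1}, where $P=I_d$ and the correction $\hat v$ vanishes, so $V_1=V$, $\hat f=A^{-1}\mu$, $\hat g=A^{-1}g$ and the claim is immediate) and then handling the genuinely singular case by splitting $Y$ into the components in $\mathrm{Im}\,P(t)$ and its complement, where on the complement the constraint equation pins down $\hat v$ and its derivatives explicitly, so the remainder terms become manifestly of order $V_1$.
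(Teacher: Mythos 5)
Your choice $V_1(t,U):=V(t,\,U+\hat v(t,U))$ is a genuinely different (and much heavier) route than the paper's, and the step you yourself flag as ``the hard part'' is where the argument breaks down. Expanding $L_1V_1$ by the chain rule produces, besides the term you hope to identify with $(LV)(t,U+\hat v(t,U))$, the remainders $\langle \nabla_Y V,\partial_t\hat v\rangle$, $\langle \nabla_Y V, (\partial_U\hat v)\hat f\rangle$, the second--order contributions carrying $(I+\partial_U\hat v)$ on both sides of $\nabla^2_Y V$, and $\langle\nabla_Y V,\partial^2_U\hat v\,\hat g\hat g^T\rangle$-type terms. Assumption (A1.2) is a bound on the \emph{combination} $LV$ only; it gives no pointwise control of $\nabla_Y V$ or $\nabla^2_Y V$ separately, so there is no mechanism in (A1.1)--(A1.3) by which these remainders are ``dominated by a constant multiple of $V_1$.'' Condition (A1.3) kills only the $P'$ term inside $\hat f$; it does not make $\partial_U\hat v$ or $\partial_t\hat v$ vanish. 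Note also that $L$ as defined in (A1.2) is \emph{not} the generator of the actual process $Y(t)=U(t)+\hat v(t,U(t))$ (whose dynamics contain the $\hat v$-corrections), so the identification of the chain-rule expansion with $(LV)\circ(\mathrm{id}+\hat v)$ cannot be exact either. As written, the proposal cannot be completed under the stated hypotheses.

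The paper's proof is the trivial one: it takes $V_1(t,U)=V(t,U)$ with no composition. Because the operator $L$ in (A1.2) is already written with the inherent-SDE coefficients $A^-(t)\mu$ and $A^-(t)g$, and because (A1.3) removes the $P'$ contribution from $\hat f$, one reads off $L_1V_1(t,U)=LV(t,U)\le MV(t,U)=MV_1(t,U)$ and sets $M_1=M$; the coercivity of $V_1$ is then literally \eqref{eqp3}. (Even there one should note that $\hat f$ and $\hat g$ are evaluated at $U+\hat v(t,U)$ while $L$'s coefficients are evaluated at the same point as $V$'s derivatives --- the identification tacitly uses that (A1.2) is imposed for every argument $Y$ --- but that is a far smaller issue than the remainder terms your composition introduces.) If you want to keep your Lyapunov function, you would need to add explicit hypotheses such as $\|\nabla_Y V\|(1+\|Y\|)+\|\nabla^2_Y V\|(1+\|Y\|)^2\le CV$ together with linear growth and bounded first and second derivatives of $\hat v$; the lemma as stated does not provide these.
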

\begin{proof}
	Let $U\in \mathbb{	R}^d,~~ and ~~ t\in \left[0,T \right]$,  we define $V_1$ as  $V_1(t,U)=V(t,U))$,  where $V$ is the function defined in  (A1.2) of  Assumption \ref{assum1}.
	By  using (A1.3) of  Assumption \ref{assum1}, we have
	\begin{align*}
		L_1V_1(t,U)&=\frac{\partial V(t,U)}{\partial t}+\sum_{i}\hat{f}_i(t,U)\frac{\partial V(t,U)}{\partial U_i}+\frac{1}{2}\sum_{i,j}(\hat{g}\hat{g}^T)_{ij}(t,U)\frac{\partial^2 V(t,U)}{\partial U_i \partial U_j}\nonumber\\
		&=\frac{\partial V(t,U)}{\partial t}+\sum_{i}\left(  A^-(t)\mu(t,U(t)+\hat{v}(t,U(t)))\right) _i\frac{\partial V(t,U)}{\partial U_i}\nonumber\\
		&+\frac{1}{2}\sum_{i,j}\left(A^-(t)g(t,U(t)+\hat{v}(t,U(t)))(A^-(t)\right.\\
        &\left. \times g(t,U(t)+\hat{v}(t,U(t))))^T\right)_{ij}\frac{\partial^2 V(t,U)}{\partial U_i \partial U_j}\\
		&=LV(t,U)\\
		& \leq MV(t,U)=MV_1(t,U), ~ t\in \left[0,T \right], ~U\in \mathbb{R}^d.
	\end{align*}
	We take $M_1=M$, and the proof is completed.
\end{proof}
At this point, we have the necessary results of the  proof of Theorem \ref{theo1}.\\
\subsubsection{Proof of the first main result  in Theorem \ref{theo1}}

In this section, we present substantially the transformation  of the SDAEs \eqref{equa1a} into regular SDEs \eqref{equa7A} with algebraic constraints. We refer the readers to \cite{serea2025existence} for details. We also establish that the coefficients of the new SDEs and the SDAEs  satisfy the  same conditions.  Finally, we use the classical results from SDEs to conclude the proof.
For  any vector Y, let us consider the following transformation
\begin{equation}\label{xx}
	Y=P(t)Y+Y-P(t)Y=P(t)Y+(I-P(t))Y=P(t)Y+Q(t)Y,
\end{equation}  where $P(\cdot)$  is a projector matrix and I the identity matrix and $t\in \left[0,T\right]$.
Let $$U=P(t)Y \text{ and } v=Q(t)Y .$$ We write the process $Y(\cdot)$ as     $$Y(t)=U(t)+v(t),~t \in \left[0,T \right].$$
Note that the choice of $P(\cdot)$ is such that the $$	\text{Ker}A(t)=\text{Im}(I-P(t)),~ t\in \left[0,T\right].$$ Let  $R(\cdot)$ another projector such that $$R(t)A(t)=0,~t\in \left[0,T\right].$$ The existence of the projectors $R(\cdot)$ and $P(\cdot)$ are given in \cite[Proposition 1]{serea2025existence}.
We can deduce from this notation the constraint equation by multiplying the equation \eqref{equa1a} by the projector $R(t),~ t\in \left[0,T\right]$, and we have
\begin{equation}\label{equa5A}
	A(t)v(t)+R(t)\mu(t,U(t)+v(t))= 0_{d\times d},                                 ~~ t\in\left[0,T \right]. ~~~~~~~~~~~~~~~~~~~~~~~~~~~~~~~~~~~~
\end{equation} 
Because the Jacobian matrix of \eqref{equa5A} is invisible then there exists a unique implicit function note $\hat{v}(\cdot,\cdot)$ solution of \eqref{equa5A} and the process $Y(\cdot)$ becomes: $$Y(t)=U(t)+\hat{v}(t,U(t)), ~~t\in \left[0,T \right]. $$

From \cite[eq (16) ]{serea2025existence},  inherent regular SDE (under P) associated to the
SDAE \eqref{equa1a} is

\begin{eqnarray}\label{equa7A}
	dU(t)&=&\left[ P'(t)(U(t)+\hat{v}(t,U(t)))+A^-(t)\mu(t,U(t)+\hat{v}(t,U(t)))\right] dt\nonumber\\
	\newline
	\nonumber\\
	&&+ \left[ A^-(t)g(t,U(t)+\hat{v}(t,U(t))\right]dW(t), \quad t\in \left[0,T \right] .
\end{eqnarray}



Let us observe that in equation \eqref{equa7A}, the variable $U(\cdot)$ is the new unknown variable.

In  \cite[Theorem 3.5]{Khasminskii}, it was demonstrated that when the coefficients of the Stochastic Differential Equations (SDEs) satisfy the conditions (A1.1) and (A1.2) in Assumption \ref{assum1},  then the SDE  has a unique solution. We establish in Lemma \ref{lem1} that the coefficients of equation \eqref{equa7A} also satisfy the condition (A1.1) in Assumption \ref{assum1}. In addition in Lemma \ref{lem2}, we have demonstrated that these coefficients satisfy the requirements of \eqref{eqp1} and \eqref{eqp3}.
As a result, a unique process $Y(\cdot)$  solution of the SDAEs  \eqref{equa1a} exists. The proof is therefore completed.

In realistic applications, there is no hope to have an explicit form of the exact solution $Y(t)$  of \eqref{equa1a}. Let us provide a numerical algorithm to approximate $Y(t)$. The result is for general domain  $D\subseteq\mathbb{R}^d$.
Our numerical  result is  studied under the following assumption 
\newpage
\begin{Assumption}
	\label{assum2}
	We assume that
	\begin{itemize}
		\item [(A2.1)] There exists an increasing sequence of bounded domains $\left\lbrace D_q\right\rbrace_{q=1}^{\infty} $ such that\\ $\cup_{q=1}^{\infty}D_q=D$, 
		$\sup_{Y\in D_q}\left\|f(t,Y) \right\|\leq M_q,  ~\text{and}~\sup_{Y\in D_q}\left|g(t,Y) \right|_F^2\leq M_q ,$  for every $q>0$,  $t\in \left[0,q\right]$, where $M_q$ is a constant.
		\item [(A2.2)]  $\mathbb{P}(\zeta\in D)=1.$

		\item [(A2.3)] The matrices $ A^-(t),~ B(t),~ P'(t), $ $D_h(t)=(A(t)-\theta B(t))^{-1}$  are  bounded and Lipschitz with the constant $K_1$,  for  $\theta\in (0,1)$ and $t\in \left[0,T \right]$. 
	\end{itemize}
\end{Assumption}
	\begin{remark}\label{rem34}
	
	 Note that  if $D=\mathbb{R}^d$, we can take  $D_q=\{Y\in \mathbb{R}^d, ~~\|Y\|<q\},\;\;\;q\geq 1,$  then  (A2.1) of 
	 Assumption \ref{assum2} becomes
		$$ \sup_{\|Y\|<q}\{\|f(t,Y)\|+|g(t,Y)\|_F^2\}\leq 2M_q, \text{ for every  } t\in [0,T] \text{ and } q\geq 1.$$
		In addition if we take  $V(t,Y)=(1+\|Y\|^2)e^{-Mt} ;~M>0$,  then (A1.2) of   Assumption \ref{assum1} gives   the  standard monotonicity  condition 
		$$\langle (A^{-}(t)A(t) Y)^T, A^-(t)f(t,Y)\rangle+ \frac{1}{2}|A^-(t)g(t,Y)\|_F^2\leq M(1+\|Y\|^2), ~ t\in [0,T], ~Y\in \mathbb{R}^d .$$
        Note that $A^-=A^-AA^-$.  
	\end{remark}
%

		\begin{remark}\cite{gyongy1998note}
        \label{def4}
		Note that a function $f$ defined on $[0,T]\times D$ and satisfying (A2.1) of Assumption \ref{assum2}  is locally Lipschitz in $ D$  if there exists  a bounded measurable  function  $f_q: [0,T]\times \mathbb{R}^d\to \mathbb{	R}^d$ such that $f_q(t,Y)=f(t,Y)$\footnote{We say that $f$ and $f_q$ agree on $D_q$.}  for  $ t\in [0,T],~Y\in D_q,\,\, q\geq 1$ and 
				\begin{eqnarray}
				\left\{\begin{array}{l}
				\|f_q(t,Y)\|	\leq L_q, ~Y\in \mathbb{R}^d , t\in \left[0,T \right]\\
					\newline\\
						\|f_q(t,X)-f_q(t,Y)\|\leq L_q\|X-Y\|, ~X,Y\in \mathbb{R}^d , t\in \left[0,T \right].
				\end{array}\right.
			\end{eqnarray} 
		\end{remark}

\section{Numerical method and its mathematical analysis}
Our objective is to develop and analyze a numerical method to approximate the solution of the stochastic differential algebraic equations (SDAEs) defined in \eqref{equa1a} based on semi-implicit Euler scheme.\\
Let us construct our numerical scheme for the approximation of the solution for the equation \eqref{equa1a}.\\
Let $T>0$, $n\in\mathbb{N}$  and $h=\frac{T}{n}$.
By applying the semi-implicit Euler method in equation \eqref{equa1a},  we obtain the following scheme given for  $i=0,1,2,...,n-1$ and $Y(t_i^n)\approx X_i$ by 
\begin{align}\label{equa2b}
	A(t_i^n)(X_{i+1}-X_i)&= hB(t_i^n)X_{i+1}+f(t_i^n, X_{i})h+g(t_i^n, X_i) (W(t_{i+1}^n)-W(t_i^n)),\nonumber\\
    & X_0=\zeta.
\end{align}

We find it convenient to use the continuous-time approximation, and hence we define $\bar{X}^n(t),~t\in \left[t_i^n,t_{i+1}^n \right] $  by
\begin{eqnarray}
\label{neweq}
	A(t_i^n)(\bar{X}^n(t)-X_i)&= (t-t_i^n)B(t_i^n)\bar{X}^n(t)+f(t_i^n, X_i)(t-t_i^n)\nonumber\\
    &+g(t_i^n, X_i)(W(t)- W(t_i^n)),~t\in \left[t_i^n,t_{i+1}^n \right] 
\end{eqnarray}
with   $\bar{X}^n(t_i^n)=X_{i} $.\\

Note that the equation \eqref{neweq} can be written as
\begin{eqnarray*}
\label{neweq1}
	A(k_n(t_i^n))(\bar{X}^n(t)-X_i)= (t-t_i^n)B(k_n(t_i^n))\bar{X}^n(t)+f(k_n(t_i^n), \bar{X}^n(k_n(t_i^n)))(t-t_i^n)\\+g(k_n(t_i^n), \bar{X}^n(k_n(t_i^n))(W(t)- W(t_{i}^n), 
\end{eqnarray*}
with  $k_n(t)=t_i^n=\frac{i T}{n} ,~ t\in \left[ t_i^n ,t_{i+1}^n\right)$.\\

Note that $\bar{X}^n(t)$ is only defined in $\left[t_i^n,t_{i+1}^n \right]$. To be more rigorous, we define the  continuous-time approximation in the interval $[0,T]$ by 
\begin{eqnarray*}
\label{neweq2}
\bar{X}_n(t) = \bar{X}^n(t)\,\,\, \text{ if}\,\,\,t\in \left[t_i^n,t_{i+1}^n \right].
\end{eqnarray*}
To ease  the notation, in the sequel of this paper, we will identify $\bar{X}_n(t)$ by $\bar{X}^n(t)$.
In our analysis, it will  be more natural to work with the  following equivalent  integral form  given for $ t\in \left[t_i^n,T \right]$ by 
\begin{eqnarray}
	\label{equa3b}
	A(k_n(t))(\bar{X}^n(t)- \bar{X}^n(t_i^n))= \int_{t_i^n}^{t}B(k_n(s))\bar{X}^n(s)+f(k_n(s), \bar{X}^n( k_n(s))ds\nonumber\\
    +\int_{t_i^n}^{t}g(k_n(s), \bar{X}^n( k_n(s))dW(s) ,~~~~~~~~~~~~~~~
\end{eqnarray}
or in the differential form
\begin{eqnarray}
	\label{equa4b1} 
	A(k_n(t))d\bar{X}^n(t)=\left[ B(k_n(t))\bar{X}^n(t)+f(k_n(t), \bar{X}^n( k_n(t))\right] dt\nonumber\\ +g(k_n(t), \bar{X}^n( k_n(t))dW(t)
    ,~~~~~~~~~~\bar{X}^n(0)=\zeta.~~~~~~
\end{eqnarray}

  Our key result, which is our second main result is the pathwise convergence result given  in the following theorem.\\


\begin{theorem}\label{theo3}
	Assume that Assumptions \ref{assum1} and \ref{assum2} hold. Let $ Y(\cdot) $ and $ \bar{X}^n(\cdot), ~~n\in \mathbb{	N}$ be respectively the solution of \eqref{equa1a} and its numerical approximation  given in \eqref{equa2b}-\eqref{equa4b1}.
    Then for every $\alpha <\frac{1}{2}$  there exists a finite random variable $\beta>0$, such that
	\begin{equation}\label{equa8a}
		\sup_{t\leq T}\left\| \bar{X}^n(t)-Y(t)\right\|\leq \beta n^{-\alpha} .
	\end{equation} 
\end{theorem}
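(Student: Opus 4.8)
The plan is to reduce the pathwise convergence of the SDAE scheme to a pathwise convergence statement for the numerical approximation of the inherent regular SDE \eqref{equa7A}, and then invoke (a localized version of) known pathwise convergence results for the Euler scheme of SDEs with locally Lipschitz coefficients satisfying a Khasminskii-type condition, in the spirit of Gy\"ongy's results \cite{gyongy1998note}. First I would observe that the semi-implicit scheme \eqref{equa2b}, after multiplying by $A^-(t_i^n)$ and using the projector decomposition $X_i = P(t_i^n)X_i + Q(t_i^n)X_i$ exactly as in the continuous transformation \eqref{xx}--\eqref{equa7A}, can be rewritten as a \emph{dual scheme} for the component $U_i := P(t_i^n)X_i$: the constraint part yields a discrete analogue of \eqref{equa5A}, whose unique solvability follows from the global invertibility of the Jacobian $J(\cdot,\cdot)$ (so that $X_i = U_i + \hat v_n(t_i^n, U_i)$ for a discrete-index implicit function $\hat v_n$ close to $\hat v$), and the differential part yields a genuine (semi-implicit, but with the implicit term linear and handled by $D_h(t)=(A(t)-\theta B(t))^{-1}$, which is bounded and Lipschitz by (A2.3)) Euler-type scheme for $U$ driven by $\hat f$ and $\hat g$ from \eqref{efuns}. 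By Lemma \ref{lem1} these coefficients are locally Lipschitz, and by Lemma \ref{lem2} they inherit the Khasminskii condition; Assumption \ref{assum2} gives the boundedness/growth control needed on the truncations.

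Next I would set up the localization argument. For each $q\geq 1$ introduce the stopping time $\tau_q = \inf\{t : \|Y(t)\|\vee \|U(t)\| \geq q\}$ (and its discrete counterpart for $\bar X^n$, or a common one by the usual trick), and replace $\mu,g$ by their truncations $\mu_q,g_q$ agreeing with them on $D_q$ (Remark \ref{def4}); on the event $\{\tau_q > T\}$ the truncated and untruncated solutions/approximations coincide. The Khasminskii condition (A1.2), via the standard Lyapunov/Gronwall estimate applied to $V_1$ (Lemma \ref{lem2}), ensures $\mathbb{P}(\tau_q \leq T) \to 0$ as $q\to\infty$, so it suffices to prove \eqref{equa8a} on each such event, i.e. for globally Lipschitz, bounded coefficients. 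This is where I would borrow the core estimate: for the Euler scheme of an SDE with globally Lipschitz bounded coefficients one has, for every $p\geq 1$, $\mathbb{E}\sup_{t\leq T}\|\bar U^n(t)-U(t)\|^p \leq C_p\, n^{-p/2}$; then a Borel--Cantelli / Kolmogorov-type argument (choosing $p$ large enough that $\sum_n n^{-p\alpha} n^{p/2}$... more precisely, applying the Markov inequality to $n^{\alpha}\sup_{t\le T}\|\bar U^n-U\|$ with $p$ large, summing over $n$, and invoking Borel--Cantelli) upgrades this to the almost-sure bound $\sup_{t\leq T}\|\bar U^n(t)-U(t)\|\leq \beta_q n^{-\alpha}$ for every $\alpha<1/2$, with $\beta_q$ a.s. finite. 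Finally, transferring back through $Y = U + \hat v(\cdot,U)$ and $\bar X^n = \bar U^n + \hat v_n(\cdot,\bar U^n)$, using the local Lipschitz continuity of $\hat v$ (from \eqref{equa5A} and the implicit function theorem, controlled on $D_q$) together with the error $\hat v_n - \hat v = O(h)$ coming from the time-discretization of the matrices $A,B$ in the constraint, yields \eqref{equa8a} on $\{\tau_q>T\}$; letting $q\to\infty$ and patching the random variables $\beta_q$ into a single a.s. finite $\beta$ completes the proof.

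\textbf{Main obstacle.} The delicate point is the discrete constraint equation: one must show that the semi-implicit scheme \eqref{equa2b} is well-posed (the map $X_{i+1}\mapsto A(t_i^n)X_{i+1}-hB(t_i^n)X_{i+1}$ is invertible, which is exactly the content of $D_h$ being well-defined under (A2.3)) \emph{and} that its inherent/dual reformulation produces the \emph{same} $\hat f,\hat g$ as the continuous SDE up to a genuinely $O(h)$ perturbation, uniformly on compact sets — i.e. controlling the discrepancy between the exact implicit function $\hat v(t,\cdot)$ solving \eqref{equa5A} with matrices $A(t),R(t)$ and the discrete one $\hat v_n(t_i^n,\cdot)$ solving the analogue with frozen matrices $A(t_i^n),R(t_i^n)$ and an extra $O(h)$ term from the $B$-implicitness. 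This requires a quantitative implicit-function-theorem estimate uniform in the localization parameter $q$, and it is the step where the global invertibility of $J$ and the Lipschitz hypotheses on $A^-,B,P',D_h$ in (A2.3) all get used simultaneously; everything downstream is then a fairly standard localization plus the classical pathwise-from-strong upgrade for SDEs.
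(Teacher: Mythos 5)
Your proposal is correct and follows essentially the same route as the paper: reduction of the semi-implicit scheme to a dual scheme for $U=P(\cdot)X$ via the discrete constraint (Lemma \ref{theo1b}), a localized strong $L^{2p}$ estimate of order $n^{-p}$ for the truncated dual scheme, a Markov/Borel--Cantelli upgrade to the pathwise rate $\alpha<\frac{1}{2}$, and a transfer back through the implicit functions $\hat v$ and $\hat v^n$ followed by removal of the localization following Gy\"ongy. The only internal variation is that the paper derives the strong estimate by showing $(\bar u^n_q)_{n}$ is a Cauchy sequence in $n,m$ and letting $m\to\infty$ (Lemma \ref{exis}) rather than comparing directly with the exact solution, and the ``main obstacle'' you single out --- the quantitative discrepancy between $\hat v^n$ and $\hat v$ --- is precisely what the paper's Lemma \ref{v} supplies.
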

The proof of this theorem is heavy and needs more preliminary results.

\subsection{Preliminary results  for Theorem \ref{theo3}}
Our first preliminary result is given in the following lemma.
\begin{lemma}\label{lemaw}
	Let $Y(t)$ be  the  $\mathcal{F}_t$-adapted process  solution  of \eqref{equa1a}  for $t<\tau:=\inf\{t: Y(t)\notin D\}$.  Assume that the Assumption \ref{assum2}  is  satisfied,  then $\tau=\infty$ a.s.
\end{lemma}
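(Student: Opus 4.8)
The plan is to show that the stopping time $\tau$ is infinite almost surely by combining the transformation of the SDAE into its inherent SDE \eqref{equa7A} with a Khasminskii-type Lyapunov argument applied to the function $V_1$ constructed in Lemma \ref{lem2}. First I would recall that, by the analysis in Section 2, the exact solution admits the representation $Y(t)=U(t)+\hat v(t,U(t))$, where $U(\cdot)$ solves the inherent regular SDE \eqref{equa7A} with coefficients $\hat f,\hat g$. Since $\hat v$ is a well-defined (and, on bounded sets, Lipschitz) implicit function obtained from the constraint \eqref{equa5A}, the event $\{Y(t)\notin D\}$ translates into an explosion-type event for $U(\cdot)$: more precisely, for the increasing family $D_q$ from (A2.1), define $\tau_q:=\inf\{t: Y(t)\notin D_q\}$, so that $\tau=\lim_{q\to\infty}\tau_q$, and it suffices to prove $\tau_q\to\infty$ a.s., equivalently $\mathbb{P}(\tau_q\le R)\to 0$ for every fixed horizon $R>0$.

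The key step is an It\^o estimate for $V_1(t,U(t))$. Applying It\^o's formula to $t\mapsto e^{-M_1 t}V_1(t,U(t))$ and using the inequality $L_1 V_1(t,U)\le M_1 V_1(t,U)$ from Lemma \ref{lem2}, the drift term is nonpositive, so the stopped process $e^{-M_1(t\wedge\tau_q)}V_1(t\wedge\tau_q,U(t\wedge\tau_q))$ is a nonnegative local martingale, hence a supermartingale after localisation. Taking expectations gives
\begin{equation}\label{eqplan1}
\mathbb{E}\bigl[e^{-M_1(t\wedge\tau_q)}V_1(t\wedge\tau_q,U(t\wedge\tau_q))\bigr]\le V_1(0,U(0)).
\end{equation}
On $\{\tau_q\le R\}$ the point $U(\tau_q)$ lies on the boundary of the region corresponding to $D_q$, which for large $q$ forces $\|U(\tau_q)\|$ to be large (using that $Y=U+\hat v$ with $\hat v$ controlled on the relevant set); combining \eqref{eqplan1} with the coercivity $V_{1_q}(t)=\inf_{\|U\|>q}V_1(t,U)\to\infty$ yields
\begin{equation}\label{eqplan2}
\mathbb{P}(\tau_q\le R)\;\le\;\frac{e^{M_1 R}\,V_1(0,U(0))}{\inf_{t\le R}V_{1_q}(t)}\;\xrightarrow[q\to\infty]{}\;0.
\end{equation}
Letting $q\to\infty$ and then $R\to\infty$ gives $\mathbb{P}(\tau<\infty)=0$, i.e. $\tau=\infty$ a.s., as claimed. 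This is essentially the classical argument of \cite[Theorem 3.5]{Khasminskii} transported through the $U$-coordinate.

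The main obstacle I anticipate is the bookkeeping that links the domain condition $Y(t)\in D_q$ to a genuine growth condition on $\|U(t)\|$ (and vice versa), since the transformation $Y=U+\hat v(t,U)$ mixes the two coordinates and $\hat v$ is only controlled locally; one must check that the coercivity \eqref{eqp3} of $V$ translates faithfully into coercivity of $V_1$ along the solution despite this coupling, and that the stopping times for $Y$ and for $U$ are comparable. A secondary technical point is the justification of the localisation/supermartingale step before the solution is known to be global — this is handled in the standard way by first stopping at $\tau_q$, where all coefficients and $\hat v$ are bounded and Lipschitz (by (A1.1), (A2.1), (A2.3) and Lemma \ref{lem1}), so the stochastic integral is a true martingale on $[0,t\wedge\tau_q]$ and \eqref{eqplan1} is rigorous. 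Once these points are in place, passing to the limit as in \eqref{eqplan2} completes the proof.
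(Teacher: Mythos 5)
Your proposal takes a genuinely different route from the paper: the paper does not actually prove Lemma \ref{lemaw}, it simply refers the reader to \cite[Lemma 2.2]{gyongy1998note}, whereas you give a self-contained Khasminskii-type non-explosion argument transported to the inherent SDE \eqref{equa7A}. Your strategy --- apply It\^o's formula to $e^{-M_1 t}V_1(t,U(t))$ with the Lyapunov function $V_1$ of Lemma \ref{lem2}, use $L_1V_1\le M_1V_1$ to get a supermartingale after stopping at $\tau_q$, then invoke the coercivity \eqref{eqp3} --- is the classical argument of \cite[Theorem 3.5]{Khasminskii}, and it is sound in the case $D=\mathbb{R}^d$: since $\hat v^{}(t,\cdot)$ is globally Lipschitz one has $\|Y\|\le C(1+\|U\|)$, so $Y(\tau_q)\in\partial D_q=\{\|Y\|=q\}$ forces $\|U(\tau_q)\|\ge (q-C)/C$, and your estimate $\mathbb{P}(\tau_q\le R)\le e^{M_1R}\,V_1(0,U(0))/\bigl(\inf_{t\le R}\inf_{\|U\|>(q-C)/C}V_1(t,U)\bigr)\to 0$ follows. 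What your argument buys over the paper's bare citation is transparency about which hypotheses actually do the work.

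Two points, however, need attention. First, the lemma as stated assumes only Assumption \ref{assum2}, which by itself cannot yield non-explosion (local boundedness on the $D_q$ does not exclude blow-up of the type $dY=Y^2\,dt$); your proof correctly imports the Khasminskii condition (A1.2)--(A1.3) through Lemma \ref{lem2}, so you are implicitly working under the hypotheses of Theorem \ref{theo3}, and this should be stated rather than left tacit. Second, and more seriously, for a general domain $D\subsetneq\mathbb{R}^d$ the obstacle you flag at the end is not mere bookkeeping: exiting $D_q$ does not force $\|Y(\tau_q)\|$, hence $\|U(\tau_q)\|$, to be large, because the path may approach a finite boundary point of $D$, while the coercivity \eqref{eqp3} controls only the regime $\|Y\|\to\infty$. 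To cover general $D$ one needs a Lyapunov function satisfying $\inf_{t\le R}\inf_{Y\in D\setminus D_q}V(t,Y)\to\infty$, i.e.\ blowing up along $\partial D$ as well as at infinity. As written, your proof establishes the lemma for $D=\mathbb{R}^d$ (the setting of Remark \ref{rem34}) but leaves the general-domain case open; closing it requires either this strengthened coercivity or a direct verification of the hypotheses under which Gy\"{o}ngy's Lemma 2.2 applies.
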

\begin{proof}
	See \cite[ Lemma 2.2]{gyongy1998note}  for the proof of Lemma \ref{lemaw}.
\end{proof}
\begin{remark}\label{rem1}  Note that  Lemma \ref{lemaw} means  that    the  $\mathcal{F}_t$-adapted process  solution  of \eqref{equa1a}  does not ever leave $D$.
\end{remark}
The following result is also important in the proof of our second main result.
\begin{lemma}\label{lem3}
	Let $p\geq 0$ , $g\in M^2(\left[0,T \right],\mathbb{R}^{d\times d_1} )$ such that: $\mathbb{E}\int_{t_0}^{t}\left|g(s) \right|^pds<\infty $ then 
	\begin{equation*}
		\mathbb{E}\left|\int_{t_0}^{t}g(s)dW(s) \right|^p\leq \left( \frac{p(p-1)}{2}\right)^{p/2}\left( T-t_0\right)^{\frac{p-2}{2}}\mathbb{E}\int_{t_0}^{t}\left|g(s) \right|^pds .~~~~~~~~~~~~~~~~~~~~
	\end{equation*}
	\end{lemma}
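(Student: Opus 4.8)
The plan is to prove this moment estimate for stochastic integrals directly, using the Burkholder–Davis–Gundy inequality together with H\"older's inequality; it is a standard $L^p$ bound for It\^o integrals of deterministic time horizon. First I would dispose of the trivial cases: for $p=0$ both sides equal $1$ (or the statement is vacuous), and for $0\le p<2$ the constant $\left(\tfrac{p(p-1)}{2}\right)^{p/2}$ must be read with the convention that the inequality is only asserted where it makes sense (in fact the paper will only invoke it for $p\ge 2$, so I would state and prove it for $p\ge 2$ and simply remark on the convention). The substantive case is therefore $p\ge 2$.

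For $p\ge 2$ I would proceed as follows. Write $M_t=\int_{t_0}^t g(s)\,dW(s)$, a continuous $\mathcal{F}_t$-martingale with quadratic variation $\langle M\rangle_t=\int_{t_0}^t |g(s)|^2\,ds$ (more precisely $\sum_j\int_{t_0}^t |g(s)e_j|^2\,ds$; I will suppress this vector/matrix bookkeeping). Apply the $L^p$ form of the Burkholder–Davis–Gundy inequality to get
\begin{equation*}
\mathbb{E}\left|M_t\right|^p\le C_p\,\mathbb{E}\left(\int_{t_0}^t |g(s)|^2\,ds\right)^{p/2}.
\end{equation*}
Then apply H\"older's inequality in the $s$-variable with exponents $\tfrac{p}{2}$ and $\tfrac{p}{p-2}$ on the interval $[t_0,t]\subseteq[t_0,T]$:
\begin{equation*}
\left(\int_{t_0}^t |g(s)|^2\,ds\right)^{p/2}\le (t-t_0)^{\frac{p-2}{2}}\int_{t_0}^t |g(s)|^p\,ds\le (T-t_0)^{\frac{p-2}{2}}\int_{t_0}^t |g(s)|^p\,ds,
\end{equation*}
and take expectations, using the hypothesis $\mathbb{E}\int_{t_0}^t|g(s)|^p\,ds<\infty$ to justify finiteness. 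Combining the two displays gives the claimed bound with a constant $C_p$ in place of $\left(\tfrac{p(p-1)}{2}\right)^{p/2}$.

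The one point that needs care — and which I expect to be the main obstacle — is obtaining the \emph{sharp} BDG constant $\left(\tfrac{p(p-1)}{2}\right)^{p/2}$ rather than the generic, non-explicit $C_p$. This precise constant comes from applying It\^o's formula to $x\mapsto |x|^p$ (which is $C^2$ for $p\ge 2$), yielding
\begin{equation*}
|M_t|^p=p\int_{t_0}^t |M_s|^{p-2}M_s^{\top}g(s)\,dW(s)+\frac{p(p-1)}{2}\int_{t_0}^t |M_s|^{p-2}|g(s)|^2\,ds,
\end{equation*}
then taking expectations to kill the martingale term, bounding $|M_s|^{p-2}$ inside the integral by $\sup_{r\le t}|M_r|^{p-2}$, and closing the estimate with H\"older in $\omega$ (exponents $\tfrac{p}{p-2},\tfrac{p}{2}$) followed by H\"older in $s$ as above; rearranging the resulting inequality $\mathbb{E}|M_t|^p\le \tfrac{p(p-1)}{2}\big(\mathbb{E}\sup_r|M_r|^p\big)^{(p-2)/p}\big(\cdots\big)^{2/p}$ and using Doob's $L^p$-inequality produces exactly the stated constant (this is the classical argument, e.g. in Mao's book \cite{Mao2007}). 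I would either reproduce this computation or simply cite it. A localization argument (stopping times $\tau_N=\inf\{t:|M_t|\ge N\}$) handles the a priori integrability needed to apply It\^o's formula and to discard the local-martingale term, and the moment hypothesis on $g$ lets us pass to the limit $N\to\infty$ by monotone/dominated convergence.
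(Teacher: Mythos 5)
Your proposal is correct and follows essentially the same route as the paper: the paper simply cites \cite[Theorem 1.7.1]{Mao2007} for this lemma, and your argument (It\^o's formula applied to $|x|^p$, discarding the martingale term via localization, H\"older in $\omega$ and in $s$, and Doob's $L^p$ inequality to close the estimate with the sharp constant $\left(\tfrac{p(p-1)}{2}\right)^{p/2}(T-t_0)^{\frac{p-2}{2}}$) is precisely the proof of that cited theorem. Your remark that the statement should really be read for $p\ge 2$ is also well taken, since that is the only regime in which the constant makes sense and the only one the paper uses.
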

\begin{proof}
	For the proof, we refer the readers to \cite[Theorem 1.7.1]{Mao2007}.
\end{proof}

The next lemma is important for the transformation from SDAEs to SDEs and constraints.
\begin{lemma}\label{lem1b}
	Let $B\in \mathcal{M}_{d\times d}(\mathbb{R})$ be a matrix and  $B: \mathbb{R}^d\to \mathbb{R}^d$ the associated linear operator. Then, there exists a projector matrix $Q$ onto $Ker(B)$ such that $Im(Q)=Ker(B)$. Moreover, we can find another projector matrix $R$ along $Im(B)$ such that $RB= 0_{d\times d}$.
\end{lemma}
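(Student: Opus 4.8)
The plan is to prove this by elementary finite-dimensional linear algebra, constructing both projectors explicitly from a direct-sum decomposition of $\mathbb{R}^d$; no stochastic or analytic input is needed.

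\textbf{Construction of $Q$.} Since $\mathbb{R}^d$ is finite dimensional, the subspace $\mathrm{Ker}(B)$ admits an algebraic complement: one picks a basis $\{e_1,\dots,e_k\}$ of $\mathrm{Ker}(B)$, extends it to a basis $\{e_1,\dots,e_k,e_{k+1},\dots,e_d\}$ of $\mathbb{R}^d$, and sets $S=\mathrm{span}\{e_{k+1},\dots,e_d\}$, so that $\mathbb{R}^d=\mathrm{Ker}(B)\oplus S$. I would then define $Q$ to be the linear map determined by $Qe_i=e_i$ for $i\le k$ and $Qe_i=0$ for $i>k$, and let $Q$ also denote its matrix in the standard basis. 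Since $Q$ fixes each $e_i$ with $i\le k$ and annihilates the remaining ones, applying it twice coincides with applying it once, i.e.\ $Q^2=Q$, so $Q$ is a projector; and $\mathrm{Im}(Q)=\mathrm{span}\{e_1,\dots,e_k\}=\mathrm{Ker}(B)$, which is the first assertion.

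\textbf{Construction of $R$.} In the same way, decompose $\mathbb{R}^d=\mathrm{Im}(B)\oplus\mathcal{T}$ by extending a basis $\{v_1,\dots,v_r\}$ of $\mathrm{Im}(B)$ to a basis $\{v_1,\dots,v_r,v_{r+1},\dots,v_d\}$ of $\mathbb{R}^d$ and putting $\mathcal{T}=\mathrm{span}\{v_{r+1},\dots,v_d\}$. Let $R$ be the projector onto $\mathcal{T}$ along $\mathrm{Im}(B)$, i.e.\ the linear map with $Rv_j=0$ for $j\le r$ and $Rv_j=v_j$ for $j>r$. As before $R^2=R$, and by construction $\mathrm{Ker}(R)=\mathrm{Im}(B)$, which is exactly what ``$R$ is a projector along $\mathrm{Im}(B)$'' means. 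Finally, for every $x\in\mathbb{R}^d$ one has $Bx\in\mathrm{Im}(B)=\mathrm{Ker}(R)$, hence $RBx=0$; since $x$ is arbitrary, $RB=0_{d\times d}$.

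\textbf{On the difficulty.} There is essentially no obstacle: the only ingredients are the existence of a complement to any subspace of a finite-dimensional space (basis extension) and the trivial verification that a map which fixes one direct summand and kills the other is idempotent. The single point deserving care is the interpretation of ``projector along $\mathrm{Im}(B)$'' — we take it to mean $\mathrm{Ker}(R)=\mathrm{Im}(B)$, and it is precisely this kernel identity that delivers $RB=0_{d\times d}$. If a canonical choice is preferred, one may instead take $Q$ to be the orthogonal projection onto $\mathrm{Ker}(B)$ and $R$ the orthogonal projection onto $\mathrm{Im}(B)^{\perp}=\mathrm{Ker}(B^{T})$; this yields the same conclusions, again with $\mathrm{Ker}(R)=\mathrm{Im}(B)$ and therefore $RB=0_{d\times d}$.
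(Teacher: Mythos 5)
Your argument is correct. Note, however, that the paper does not actually prove this lemma: it simply cites Proposition~1 of the reference \cite{serea2025existence}, so there is no in-paper proof to match step by step. What you supply is a complete, self-contained elementary proof, and it is the standard one: extend a basis of $\mathrm{Ker}(B)$ to obtain a complement $S$ and let $Q$ be the projection onto $\mathrm{Ker}(B)$ along $S$, giving $Q^2=Q$ and $\mathrm{Im}(Q)=\mathrm{Ker}(B)$; similarly choose a complement $\mathcal{T}$ of $\mathrm{Im}(B)$ and let $R$ project onto $\mathcal{T}$ along $\mathrm{Im}(B)$, so that $\mathrm{Ker}(R)=\mathrm{Im}(B)$ and hence $RBx=0$ for every $x$, i.e.\ $RB=0_{d\times d}$. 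You are also right that the only interpretive point is what ``projector along $\mathrm{Im}(B)$'' means, and your reading ($\mathrm{Ker}(R)=\mathrm{Im}(B)$) is the one consistent with how the paper uses $R$ later (namely $R(t)A(t)=0$ and the extraction of the constraint equation); the orthogonal-projection variant $R=$ projection onto $\mathrm{Ker}(B^{T})$ is an acceptable canonical choice as well. In short, your proof is valid and arguably more useful to a reader than the paper's bare citation; the only thing the citation buys is consistency with the construction of the specific projectors $P(t)$, $Q(t)$, $R(t)$ used elsewhere in \cite{serea2025existence}, where the time-dependence and smoothness of these projectors matter, an issue your pointwise construction does not address (and is not required to by the statement as given).
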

\begin{proof}
	We refer the reader to \cite[Proposition 1]{serea2025existence}.
\end{proof}
By using the  previous lemma we are able to establish this preliminary result as  follows.

\begin{lemma}\label{lem2b}
	Let $B \in \mathcal{M}_{n\times n}(\mathbb{R})$ a singular matrix, $Q \in \mathcal{M}_{n\times n}(\mathbb{R})$ is a projector given by Lemma \ref{lem1b} such that $Im(Q)=Ker(B)$. Then there exists a suitable non-singular matrix $D \in \mathcal{M}_{n\times n}(\mathbb{R})$ such that $DB=I-Q=P$\footnote{Observe that $P=I-Q$ is also a projector.}.
\end{lemma}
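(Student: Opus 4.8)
The plan is to build $D$ directly as a linear bijection of $\mathbb{R}^n$ by matching two splittings: on the one hand $\mathbb{R}^n=\mathrm{Im}(B)\oplus C$ for any chosen complement $C$ of the range of $B$, and on the other hand $\mathbb{R}^n=\mathrm{Im}(P)\oplus\mathrm{Ker}(P)$ with $P=I-Q$. First I would record the elementary facts that make the construction work: since $\mathrm{Im}(Q)=\mathrm{Ker}(B)$ we have $\mathrm{Ker}(P)=\mathrm{Im}(Q)=\mathrm{Ker}(B)$, and $\mathrm{Im}(P)$ is a complement of $\mathrm{Ker}(B)$. Consequently the restriction $B|_{\mathrm{Im}(P)}\colon\mathrm{Im}(P)\to\mathrm{Im}(B)$ is injective, because its kernel is $\mathrm{Im}(P)\cap\mathrm{Ker}(B)=\mathrm{Im}(P)\cap\mathrm{Ker}(P)=\{0\}$, and by the dimension count $\dim\mathrm{Im}(P)=n-\dim\mathrm{Ker}(B)=\mathrm{rank}(B)=\dim\mathrm{Im}(B)$ it is in fact an isomorphism.

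Next I would define $D$ on $\mathrm{Im}(B)$ by $Dw:=(B|_{\mathrm{Im}(P)})^{-1}w$, i.e.\ for $w\in\mathrm{Im}(B)$ let $Dw$ be the unique vector of $\mathrm{Im}(P)$ whose image under $B$ equals $w$, and on the complement $C$ I would let $D$ act as any fixed linear isomorphism $C\to\mathrm{Ker}(B)$, which exists since $\dim C=n-\mathrm{rank}(B)=\dim\mathrm{Ker}(B)$. Extending by linearity yields a well-defined $D\in\mathcal{M}_{n\times n}(\mathbb{R})$ whose image is $\mathrm{Im}(P)+\mathrm{Ker}(B)=\mathrm{Im}(P)\oplus\mathrm{Ker}(P)=\mathbb{R}^n$; hence $D$ is surjective, therefore bijective, therefore nonsingular.

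Finally I would verify $DB=P$ on an arbitrary $x\in\mathbb{R}^n$: writing $x=Px+Qx$ and using $Qx\in\mathrm{Ker}(B)$ gives $Bx=B(Px)$ with $Px\in\mathrm{Im}(P)$, so by construction $D(Bx)=D(B(Px))=Px$. Since $x$ is arbitrary, $DB=P=I-Q$, as claimed. The only point requiring care is the bookkeeping that the two dimension counts ($\dim C$ versus $\dim\mathrm{Ker}(B)$, and $\dim\mathrm{Im}(P)$ versus $\mathrm{rank}(B)$) line up so that $D$ can be made bijective; the rest is routine linear algebra. An equivalent, more computational route is to take a basis $e_1,\dots,e_r$ of $\mathrm{Im}(P)$, complete it by a basis $e_{r+1},\dots,e_n$ of $\mathrm{Ker}(B)$, note that $Be_1,\dots,Be_r$ is then a basis of $\mathrm{Im}(B)$, complete that to a basis of $\mathbb{R}^n$, and define $D$ on basis vectors so that it sends $Be_k\mapsto e_k$ for $k\le r$ and the remaining $n-r$ basis vectors onto $e_{r+1},\dots,e_n$; in this form the nonsingularity of $D$ is immediate since it maps a basis to a basis.
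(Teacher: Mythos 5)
Your proof is correct. Note that the paper itself does not prove this lemma; it only refers the reader to Proposition~2 of the cited reference \cite{serea2025existence}, so there is no in-paper argument to compare against. Your construction is the standard one and it is complete: the key observations --- that $\mathrm{Ker}(P)=\mathrm{Im}(Q)=\mathrm{Ker}(B)$, that $B$ restricted to $\mathrm{Im}(P)$ is therefore an isomorphism onto $\mathrm{Im}(B)$ by the rank--nullity count, and that $D$ can be defined as the inverse of this restriction on $\mathrm{Im}(B)$ and as an arbitrary isomorphism from a complement of $\mathrm{Im}(B)$ onto $\mathrm{Ker}(B)$ --- are all verified, the surjectivity (hence invertibility) of $D$ follows from $\mathrm{Im}(D)\supseteq\mathrm{Im}(P)\oplus\mathrm{Ker}(P)=\mathbb{R}^n$, and the identity $DBx=D(B(Px))=Px$ is checked on an arbitrary vector. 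Your closing remark giving the basis-by-basis version is a nice way to make the nonsingularity of $D$ transparent. The only cosmetic point: you could state explicitly that $D$ built this way depends on the choice of the complement $C$ and of the isomorphism on $C$, i.e.\ $D$ is not unique, which is consistent with the lemma's phrasing ``there exists a suitable non-singular matrix $D$''.
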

\begin{proof}
	We refer the readers to \cite[Proposition 2]{serea2025existence}.
\end{proof}
%
The most important preliminary result is this equivalence result. 
\begin{lemma}\label{theo1b} Suppose that the matrix $(A(t_i^n)+R(t_i^n)B(t_i^n)),i,n\in\mathbb{N}$ is a non singular matrix.
	The scheme defined in equation \eqref{equa2b} is equivalent to the following scheme
    \begin{equation}\label{equa5b}
		\left \{
		\begin{array}{c c c}
			u_{i+1}-u_i =& P'(t_i^n)\left[ u_{i+1}+\hat{v}^n(t_{i+1}^n,u_{i+1})\right]h+\hat{f}(t_i^n,u_{i})h~~~~~~~~~~~~~~~~~~\\
            \\
            &+A^-(t_i^n) B(t_i^n)\left[u_{i+1}+\hat{v}^n(t_{i+1}^n,u_{i+1})\right]h+\hat{g}(t_i^n,u_{i})\Delta W_i,\\
			\newline\\
           \hat{v}^n(t_{i+1}^n,u_{i+1})=&-(A(t_i^n)+R(t_i^n)B(t_i^n))^{-1}\left(R(t_i^n)B(t_i^n)u_{i+1}\right)~~~~~~~~~~~~~~\\
                      \newline\\
                      &-(A(t_i^n)+R(t_i^n)B(t_i^n))^{-1}f_1(t_i^n,u_{i})~~~~~~~~~~~~~~~~~~~~~\\
             \\
               u_0=&P(0)\zeta, ~~~~~~~~~~~~~~~~~~~~~~~~~~~~~~~~~~~~~~~~~~~~~~~~~~~~~~~~~\\
          \hat{v}^n(0,u_0)=&Q(0)\zeta,~~~~~~~~~~~~~~~~~~~~~~~~~~~~~~~~~~~~~~~~~~~~~~~~~~~~~~~~~\\
               \\
			X_{i+1}=&u_{i+1} +\hat{v}^n(t_{i+1},u_{i+1}),	~ i=0,1,...,n-1,~n\in\mathbb{N},~~~~~
		\end{array}\right. 
	\end{equation}
	where the  matrix $A^{-}(t)$ is the pseudo-inverse matrix of the matrix $A(t)$,   $P(t)$  and $R(t)$ are projectors matrix associated to $A(t)$,  $t\in \left[0,T \right] $ and 
     \begin{equation}\label{chapeau}
		\left \{
        \begin{array}{c c c}
        \hat{f}(t_i^n, u_n)&=A^-(t_i^n)f(t_i^n,u_{i}+\hat{v}^n(t_i^n,u_{i}))\\
        \\
        \hat{g}(t_i^n, u_n)&=A^-(t_i^n)g(t_i^n,u_{i}+\hat{v}^n(t_i^n,u_{i}))\\
        \\
        f_1(t_i^n,u_n)&=R(t_i^n)f(t_i^n,u_{i}+\hat{v}^n(t_i^n,u_{i})).
        \end{array}\right. 
	\end{equation}
    Note that the functions $\hat{f} \text{ and } f_1$ satisfy the same conditions as the functions $f$ and the function $\hat{g} $ satisfied the same conditions as the function $g$  (see for example \cite{serea2025existence} ).
\end{lemma}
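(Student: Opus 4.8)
The plan is to derive the dual scheme \eqref{equa5b} from \eqref{equa2b} by the same projector splitting that takes the SDAE \eqref{equa1a} into the inherent SDE \eqref{equa7A} together with the algebraic constraint \eqref{equa5A}, and then to check that every step is reversible, so that the two recursions generate one and the same sequence $\{X_i\}$. I will use throughout the identities $A(t)A^-(t)A(t)=A(t)$, $A^-(t)A(t)=P(t)$ and $\mathrm{Im}\,Q(t)=\mathrm{Ker}\,P(t)=\mathrm{Ker}\,A(t)$ for $Q(t):=I-P(t)$, the projector relations $R(t)A(t)=0$, $A(t)R(t)=0$ and hence $\mathrm{Im}\,R(t)=\mathrm{Ker}\,A(t)$, $\mathrm{Ker}\,R(t)=\mathrm{Im}\,A(t)$ (Lemma \ref{lem1b} and Definition \ref{def2a}), the index-$1$ hypothesis $\mathrm{Im}\,g(t,\cdot)\subseteq\mathrm{Im}\,A(t)$, which gives $R(t)g(t,\cdot)=0$, and crucially Assumption (A1.3), by which $P(t)\equiv P$, $Q(t)\equiv Q$ are constant and $P'(t)=0$. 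This last point is what makes the reduction an exact algebraic identity rather than an approximation; it also explains why the $P'(t_i^n)[\,\cdot\,]h$ terms written in \eqref{equa5b} (kept only for parallelism with \eqref{equa7A}) vanish, and that the claimed ``linked-to-the-inherent-SDE'' structure of \eqref{equa5b} is precisely a semi-implicit discretisation of \eqref{equa7A}.

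For the forward implication, take a solution $\{X_i\}$ of \eqref{equa2b} and set $u_i:=PX_i$, so $X_i=u_i+QX_i$. Multiplying \eqref{equa2b} on the left by $R(t_i^n)$ kills both $R(t_i^n)A(t_i^n)(X_{i+1}-X_i)$ and $R(t_i^n)g(t_i^n,X_i)\Delta W_i$; dividing by $h$ leaves the discrete constraint $R(t_i^n)B(t_i^n)X_{i+1}+R(t_i^n)f(t_i^n,X_i)=0$. Writing $X_{i+1}=u_{i+1}+\hat v^n(t_{i+1}^n,u_{i+1})$ with $\hat v^n(t_{i+1}^n,u_{i+1}):=QX_{i+1}\in\mathrm{Ker}\,A(t_i^n)$ and $f_1(t_i^n,u_i):=R(t_i^n)f(t_i^n,X_i)$ as in \eqref{chapeau}, and adding the vanishing term $A(t_i^n)\hat v^n(t_{i+1}^n,u_{i+1})$, this becomes $(A(t_i^n)+R(t_i^n)B(t_i^n))\hat v^n(t_{i+1}^n,u_{i+1})=-R(t_i^n)B(t_i^n)u_{i+1}-f_1(t_i^n,u_i)$; inverting the nonsingular matrix $A(t_i^n)+R(t_i^n)B(t_i^n)$ yields the second line of \eqref{equa5b}. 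Multiplying \eqref{equa2b} instead on the left by $A^-(t_i^n)$ and using $A^-(t_i^n)A(t_i^n)=P$ gives $u_{i+1}-u_i$ on the left, and substituting $X_{i+1}=u_{i+1}+\hat v^n(t_{i+1}^n,u_{i+1})$, $X_i=u_i+\hat v^n(t_i^n,u_i)$ and reading off $\hat f,\hat g$ from \eqref{chapeau} produces the first line of \eqref{equa5b}. Finally $X_0=\zeta=P\zeta+Q\zeta$ gives $u_0=P(0)\zeta$, $\hat v^n(0,u_0)=Q(0)\zeta$.

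For the converse, given $\{u_i,\hat v^n\}$ solving \eqref{equa5b} set $X_i:=u_i+\hat v^n(t_i^n,u_i)$ and run the manipulations backwards: apply $A(t_i^n)+R(t_i^n)B(t_i^n)$ to the constraint line and $A(t_i^n)$ to the recursion line, then recombine using $A(t_i^n)P=A(t_i^n)$, $A(t_i^n)\hat v^n(t_{i+1}^n,u_{i+1})=0$, $A(t_i^n)A^-(t_i^n)g(t_i^n,X_i)=g(t_i^n,X_i)$ (index-$1$), and the fact that the constraint forces $B(t_i^n)X_{i+1}+f(t_i^n,X_i)\in\mathrm{Ker}\,R(t_i^n)=\mathrm{Im}\,A(t_i^n)$, on which $A(t_i^n)A^-(t_i^n)$ acts as the identity; this reproduces \eqref{equa2b}. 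The hard part — the only genuinely non-mechanical step — is establishing $\hat v^n(t_{i+1}^n,u_{i+1})\in\mathrm{Ker}\,A(t_i^n)$ in this direction: one argues that $A(t_i^n)+R(t_i^n)B(t_i^n)$ restricts to a bijection of $\mathrm{Ker}\,A(t_i^n)$ onto $\mathrm{Im}\,R(t_i^n)=\mathrm{Ker}\,A(t_i^n)$ (there it acts as $R(t_i^n)B(t_i^n)$, and is injective on it by global nonsingularity), so that $(A(t_i^n)+R(t_i^n)B(t_i^n))^{-1}$ sends the right-hand side $-R(t_i^n)B(t_i^n)u_{i+1}-f_1(t_i^n,u_i)\in\mathrm{Im}\,R(t_i^n)$ back into $\mathrm{Ker}\,A(t_i^n)$. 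This is the discrete counterpart of the unique solvability of the constraint \eqref{equa5A}; once it is in place, the remaining identifications — including that $\hat f,f_1,\hat g$ inherit the local Lipschitz and growth properties of $f,g$, which follows as in \cite{serea2025existence} — are routine bookkeeping, and the equality of the two generated sequences follows because $X\mapsto(PX,QX)$ is a linear bijection of $\mathbb{R}^d$.
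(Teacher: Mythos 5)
Your proposal is correct in substance and rests on the same projector decomposition ($P=A^-A$, $Q=I-P$, $R$ with $RA=0$, $A^-=D(I-R)$) as the paper, but it executes the reduction directly on the discrete recursion \eqref{equa2b}, whereas the paper first passes to the continuous interpolation \eqref{equa4b1}, applies the It\^o formula to $P(k_n(t))\bar{X}^n(t)$ to obtain the inherent SDE \eqref{equa15ba}, and only then reads off the discrete form \eqref{equa17b}. Your route buys two things the paper does not supply: (i) an exact algebraic identity at each time step, with no appeal to stochastic calculus, and (ii) the converse direction, i.e.\ a verification that \eqref{equa5b} reproduces \eqref{equa2b}, for which your key observation — that $A+RB$ restricts to a bijection of $\mathrm{Ker}\,A$ onto $\mathrm{Im}\,R$, so that $(A+RB)^{-1}$ sends the right-hand side of the constraint back into $\mathrm{Ker}\,A$ — is exactly the point the paper leaves implicit while nonetheless claiming ``equivalence''. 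Two caveats. First, your argument needs $P$ (hence $Q$ and $\mathrm{Ker}\,A(t)$) to be constant in order that $A^-(t_i^n)A(t_i^n)(X_{i+1}-X_i)=u_{i+1}-u_i$ and that $A(t_i^n)\hat{v}^n(t_{i+1}^n,u_{i+1})=0$; this holds under (A1.3), but (A1.3) is not among the hypotheses of the lemma as stated, and with a genuinely time-varying $P$ the exact discrete correction is $(P(t_{i+1}^n)-P(t_i^n))X_{i+1}$, which equals the displayed term $P'(t_i^n)X_{i+1}h$ only up to $O(h^2)$ — so you have (correctly) identified that the $P'$ terms in \eqref{equa5b} are either vacuous or only formally present, which is a useful observation but means your proof covers the lemma in the regime where the standing assumptions of Theorem \ref{theo3} are in force rather than verbatim as stated. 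Second, the identity $\mathrm{Im}\,R(t)=\mathrm{Ker}\,A(t)$ you assert is false in general (both are complements of the same dimension, of $\mathrm{Im}\,A$ and in $\mathbb{R}^d$ respectively, but need not coincide); fortunately your argument never uses it — the bijection $\mathrm{Ker}\,A\to\mathrm{Im}\,R$, $v\mapsto RBv$, together with injectivity of $A+RB$ and a dimension count, is all that is required — so this is a slip of notation rather than of logic.
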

\begin{proof}
	The idea here is to transform the numerical scheme for SDAEs to a numerical scheme for SDEs with the associated constraints. 
	In this proof, we will use the continuous form associated to scheme \eqref{equa2b} defined in  \eqref{equa4b1} by	
	\begin{eqnarray*} 
	A(k_n(t))d\bar{X}^n(t)=\left[ B(k_n(t))\bar{X}^n(t)+f(k_n(t), \bar{X}^n( k_n(t))\right] dt\\ 
    +g(k_n(t), \bar{X}^n( k_n(t))dW(t)
    ,~~~~~~~~~~\bar{X}^n(0)=\zeta~~~~~~
\end{eqnarray*}
    
	Recall that Lemma \ref{lem1b} provides the existence of two projectors $Q(t)$ and $R(t)$ such that  $ImQ(t)=KerA(t)$, and $R(t)A(t)= 0_{d\times d}$, $t\in \left[0, T\right] $.
	 From equation \eqref{xx} or from \cite{serea2025existence} and \cite{Renate2003} the solution $\bar{X}^n(\cdot)$ can be written as 
	\begin{eqnarray}
	\label{equa6b}
	\left \{
	\begin{array}{l}
		\bar{X}^n(t)=P(k_n(t))\bar{X}^n(t)+Q(k_n(t))\bar{X}^n(t)=\bar{u}^n(t)+\bar{v}^n(t),\\
		\newline\\
		 \bar{u}^n(t)\in \text{Im}P(k_n(t)), \quad \bar{v}^n(t)\in \text{Im}Q(k_n(t)) ,\quad t\in\left[0,T \right].
	\end{array}\right. 
       \end{eqnarray}
	Let us find the constraints equation associated to our scheme \eqref{equa4b1}.
	Let us multiply the equation (\ref{equa4b1})  by the matrix $R(\cdot)$. Recall that $R(t)g(t, \bar{X}^n(t))=0,  ~t\in \left[0, T \right] $\footnote{see  the definition of index 1 SDAEs (Definition \ref{def2b})}.\\ 
	
	Moreover, we have $A(k_n(t))\bar{v}^n(t)=0$ for  $\bar{v}^n(t)\in \text{ImQ}(k_n(t))=\text{Ker}A(k_n(t)),~ t\in \left[ 0,T\right] $. Consequently, we obtain the following constraints
	\begin{equation}\label{equa7b}
		A(k_n(t))\bar{v}^n(t)+R(k_n(t))B(k_n(t))(\bar{u}^n(t)+\bar{v}^n(t))+R(k_n(t))f(k_n(t),\bar{u}^n(k_n(t))+\bar{v}^n(k_n(t)))=0. ~~~~~~~~~~~~~~~~~~~~~~~~~~
	\end{equation}
	From equation \eqref{equa7b} and because $A(t)+R(t)B(t), ~t\in \left[0,    T\right]$ is a non singular matrix as  we have assumed in  Lemma \ref{theo1b}, we have
	\begin{align}\label{equa8b}
		\bar{v}^n(t)=&\hat{v}^n(t,\bar{u}^n(t))\nonumber\\
        =&-(A(k_n(t))+R(k_n(t))B(k_n(t)))^{-1}\left[R(k_n(t))B(k_n(t))\bar{u}^n(t)\right.\nonumber\\
        &\left. +R(k_n(t))f(k_n(t),\bar{u}^n((k_n(t))+\hat{v}^n(k_n(t),\bar{u}^n((k_n(t)))) \right] , ~t\in \left[ 0,T\right]. \nonumber\\
        =&-(A(k_n(t))+R(k_n(t))B(k_n(t)))^{-1}\left[R(k_n(t))B(k_n(t))\bar{u}^n(t)\right.\nonumber\\
        &\left. +f_1(k_n(t),\bar{u}^n((k_n(t))) \right] , ~t\in \left[ 0,T\right]. 
	\end{align}
    Remember that $f_1 =Rf$ is defined in \eqref{chapeau}.\\
    
	Consequently, equation \eqref{equa7b}  is the constraint equation associated to the SDAEs, and the solution $\bar{v}^n(\cdot)$ is given by the implicit function $\hat{v}^n(\cdot, \bar{u}^n(\cdot))$ given in \eqref{equa8b}
	and the equality $(\ref{equa6b})$ becomes
	\begin{equation}\label{equa9b}
		\bar{X}^n(t)=\bar{u}^n(t)+\hat{v}^n(t,\bar{u}^n(t)),\quad  [0,T].~~~~~~~~~~~~~~~~~~~
	\end{equation}
	Note that the implicit function $\hat{v}^n$ is globally Lipschitz with respect to the second variable  $\bar{u}^n$. Indeed  we can just prove that the $\left\|\frac{\partial \hat{v}^n}{\partial u}(t,u)\right\|\leq L_{\hat{v}}$ (see for example \cite[Part 2 in the proof of Theorem 1]{serea2025existence} for more explanation). Note also that $L_{\hat{v}}$ is the Lipschitz constant independent to $n$.

	Let  us find now the stochastic differential equation (SDEs) associated to the SDAEs \eqref{equa2b}.\\
	
	Let us multiply the equation $(\ref{equa4b1})$ by the projector matrix $I-R(\cdot)$. Using the fact that $R(t)g(t,X(t))=0$,  we have
	\begin{align}\label{equa10b}
		A(k_n(t))d\bar{X}^n(t)&=(I-R(k_n(t)))\left[ B(k_n(t))\bar{X}^n(t)+f(k_n(t),\bar{X}^n(k_n(t)))\right] dt\nonumber\\
        &+g(k_n(t),\bar{X}^n(k_n(t)))dW(t), ~~ t\in [0,T].
	\end{align}
	
	Let $D(\cdot)$ be a non-singular matrix provided by Lemma \ref{lem2b}. Then we have  $D(t)A(t)=P(t)$. Let  $A^-(t)=D(t)(I-R(t))$ is the  pseudo inverse of a matrix A(t) with $A^-(t)A(t)=P(t)$ and $A(t)A^-(t)=(I-R(t))$ for all $ t\in [0,T]$.\\ 
	Multiplying $(\ref{equa10b})$ by the matrix $A^-(\cdot)$   we  obtain 
	\begin{align}\label{equa111b}
		P(k_n(t))d\bar{X}^n(t)&=A^-(k_n(t))(I-R(k_n(t)))\left[ B(k_n(t))\bar{X}^n(t)+f(k_n(t),\bar{X}^n(k_n(t)))\right] dt\nonumber\\
        &+A^-(k_n(t))g(k_n(t),\bar{X}^n(k_n(t)))dW(t), ~~ t\in [0,T].
	\end{align}
    
	Note that $$A^-(t)(I-R(t))=D(t)(I-R(t))(I-R(t))=D(t)(I-R(t))=A^-(t)$$ for all $t\in \left[0,T\right].$
	We continue by defining $$h(t,\bar{X}^n)=P(k_n(t))\bar{X}^n\,\,\, t\in \left[0,T \right]$$ and $\bar{X}^n\in \mathbb{R}^d$. Observe that $h(\cdot,\bar{X}^n)$ is a It\^o process wherever $\bar{X}^n(\cdot)$  is a It\^o  process.\\  
	Because   $P(\cdot)$ is an continuously differentiable projector,  by  It\^o formula, we obtain
	\begin{align*}
		dh(t,\bar{X}^n)&=P'(k_n(t))\bar{X}^ndt+P(k_n(t))d\bar{X}^n,~t\in \left[0,T\right] .~~~~~~~~~~~~~~~~~~~~~~~~~~~~~~~~~~~~~~~~~~~~~~~~~~~~~~
	\end{align*}
	Consequently,
	\begin{align}\label{equa12b}
		P(k_n(t))d\bar{X}^n(t)&= d(P(k_n(t))\bar{X}^n(t))-P'(k_n(t))\bar{X}^n(t)dt,~t\in \left[0,T \right].~~~~~~~~~~~~~~~~~~~~~~~~~~~~~~~~~~
	\end{align}
	Combining the equations $\eqref{equa111b}$ and $\eqref{equa12b}$ we obtain
	\begin{align}\label{equa14b}
		d(P(k_n(t))\bar{X}^n(t))&=P'(k_n(t))\bar{X}^n(t)dt+A^-(k_n(t))B(k_n(t))\bar{X}^n(t)dt\nonumber\\
        &+A^-(k_n(t))f(k_n(t),\bar{X}^n(k_n(t)))dt\nonumber\\
        &+A^-(k_n(t))g(k_n(t),X(k_n(t)))dW(t), ~~
		t\in \left[0,T \right].
	\end{align}
	We recall  that by equations \eqref{equa9b} and \eqref{equa6b}  $P(k_n(t))\bar{X}^n(t)= \bar{u}^n(t)$ and $\bar{X}^n(t)=\bar{u}^n(t)+\hat{v}^n(t,\bar{u}^n(t)), ~t\in  \left[ 0,T\right]$. Therefore, we can rewrite the equation \eqref{equa14b} as follows
	\begin{eqnarray}
	\label{equa16ba}
		d\bar{u}^n(t)&=&P'(k_n(t))[\bar{u}^n(t)+\hat{v}^n(t,\bar{u}^n(t))]dt\nonumber\\
        \nonumber\\
        &+&A^-(k_n(t))B(k_n(t))[\bar{u}^n(t)+\hat{v}^n(t,\bar{u}^n(t))]dt\nonumber \\
        \nonumber\\
		&+ & (A^-(k_n(t))f(k_n(t),\bar{u}^n(k_n(t))+\hat{v}^n(k_n(t),\bar{u}^n(k_n(t))))dt\nonumber\\
        \nonumber\\
        &+&A^-(k_n(t))g(k_n(t),\bar{u}^n(k_n(t))+\hat{v}^n(k_n(t),\bar{u}^n(k_n(t))))dW(t),
	\end{eqnarray}
     This means that from \eqref{chapeau} we have
     	\begin{eqnarray}
	\label{equa16b}
		d\bar{u}^n(t)&=&P'(k_n(t))[\bar{u}^n(t)+\hat{v}^n(t,\bar{u}^n(t))]dt\nonumber\\
        \nonumber\\
        &+&A^-(k_n(t))B(k_n(t))[\bar{u}^n(t)+\hat{v}^n(t,\bar{u}^n(t))]dt\nonumber \\
        \nonumber\\
		&+ & \hat{f}(k_n(t),\bar{u}^n(k_n(t)))dt+\hat{g}(k_n(t),\bar{u}^n(k_n(t)))dW(t).
	\end{eqnarray}
	The equation \eqref{equa16ba} is called the inherent regular SDEs associated to the SDAEs \eqref{equa4b1}. 
	So we can rewrite equation \eqref{equa4b1} as a following
	\begin{equation}\label{equa15ba}
		\left \{
		\begin{array}{c c c}
			d\bar{u}^n(t)&=&P'(k_n(t))[\bar{u}^n(t)+\hat{v}^n(t,\bar{u}^n(t))]dt~~~~~~~~~~~~~~~~~~~~~~~~~~~~~~~~~~~~~~~~~~~~~~~~~~~~~~~~~~~~~~~~~~\\
            \\
            &+&A^-(k_n(t))B(k_n(t))[\bar{u}^n(t)+\hat{v}^n(t,\bar{u}^n(t))]dt~~~~~~~~~~~~~~~~~~~~~~~~~~~~~~~~~~~~~~~~~~~~~~~~~~~~~~~~~\\
			\\
			&+&\hat{f}(k_n(t),\bar{u}^n(k_n(t)))dt+\hat{g}(k_n(t),\bar{u}^n(k_n(t)))dW(t),~~~~~~~~~~~~~~~~~~~~~~~~~~~~~~~~~~~~~~~~~~~\\
			\\
			\hat{v}^n(t,\bar{u}^n(t))&=&-(A(k_n(t))+R(k_n(t))B(k_n(t)))^{-1}\left[R(k_n(t))B(k_n(t))\bar{u}^n(t)\right.~~~~~~~~~~~~~~~~~~~~~~~~~~~~~~~~~\\
            \\
            &+&\left.f_1(k_n(t),\bar{u}^n(k_n(t))) \right] ,              ~~~~~~~~~~~~~~~~~~~~~~~~~~~~~~~~~~~~~~~~~~~~~~~~~~~~~~~~~~~~~~~~~~~~~~~~~~\\
			\\
			\bar{u}^n(0)&=&P(0)\zeta,~~~~~~~~~~~~~~~~~~~~~~~~~~~~~~~~~~~~~~~~~~~~~~~~~~~~~~~~~~~~~~~~~~~~~~~~~~~~~~~~~~~~~~~~~~\\
            \\
			\hat{v}^n(0, \bar{u}^n(0))&=&Q(0)\zeta,~~~~~~~~~~~~~~~~~~~~~~~~~~~~~~~~~~~~~~~~~~~~~~~~~~~~~~~~~~~~~~~~~~~~~~~~~~~~~~~~~~~~~~~~~~\\
			\\
            
			\bar{X}^n(t)&=&\bar{u}^n(t) +	\hat{v}^n(t,\bar{u}^n(t))	,~t\in  \left[ 0,T\right]. ~~~~~~~~~~~~~~~~~~~~~~~~~~~~~~~~~~~~~~~~~~~~~~~~~~~~~~~~~~~~~ 
		\end{array}\right.  
	\end{equation}
	Finally, we can write equation \eqref{equa15ba} in the discrete form and obtain exactly equation \eqref{equa5b} in lemma \ref{theo1b}
	\begin{equation}\label{equa17b}
		\left \{
		\begin{array}{c c c}
			u_{i+1}-u_i &=& P'(t_i^n)\left[ u_{i+1}+\hat{v}^n(t_{i+1}^n, u_{i+1})\right] h+\hat{f}(t_i^n,u_{i})h~~~~~~~~~~~~~~~\\
			\\
			&+&A^-(t_i^n) B(t_i^n)\left[u_{i+1}+\hat{v}^n(t_{i+1}^n,u_{i+1})\right]h+\hat{g}(t_i^n,u_{i})\Delta W_i,\\
			\\
			\hat{v}^n(t_{i+1}^n,u_{i+1})&=& -(A(t_i^n)+R(t_i^n)B(t_i^n))^{-1}\left(R(t_i^n)B(t_i^n)u_{i+1}\right),~~~~~~~~~~~~~\\
            \\
            &&-(A(t_i^n)+R(t_i^n)B(t_i^n))^{-1}f_1(t_i^n,u_{i})~~~~~~~~~~~~~~~~~~~~~~~~~\\
			\\
	u_0&=&P(0)\zeta,~~~~~~~~~~~~~~~~~~~~~~~~~~~~~~~~~~~~~~~~~~~~~~~~~~~~~~~~~~~\\
          \hat{v}^n(0,u_0)&=&Q(0)\zeta,~~~~~~~~~~~~~~~~~~~~~~~~~~~~~~~~~~~~~~~~~~~~~~~~~~~~~~~~~~~\\
			\\
			X_{i+1}&=&u_{i+1} +\hat{v}^n(t_{i+1}^n,u_{i+1}),~ i=0,1,...,n-1,n\in\mathbb{N},~~~~~~~~ 
		\end{array}\right. 
	\end{equation}
    where the functions $\hat{f},~ \hat{g},~ f_1$ are defined in \eqref{chapeau}.
    This ends the proof.
\end{proof}


The following lemma  shows that the continuous numerical solution $\bar{u}_q^n$  \eqref{equa16b} is bounded.

\begin{lemma}\label{lemef}
	Assume that Assumptions \ref{assum1} and \ref{assum2} hold,  then the solution $\bar{u}_q^n$ of equation \eqref{equa16b} where the coefficients $f$ and g are replaced by the functions $f_q$ and $g_q$ respectively \footnote{See Remark \ref{def4}} satisfies the following inequality
	$$\mathbb{	E}\left\|  \bar{u}_q^{n}(s)\right\|^{2p}\leq C, ~s\in \left[0, T\right] , ~p\geq 1, ~C\text{ is a constant independent to n}. ~~~~~~~~~~~~~~~~~~~~~~~~~~~~~~~~~~~~~~$$
\end{lemma}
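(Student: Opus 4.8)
The plan is to establish the moment bound $\mathbb{E}\|\bar{u}_q^n(s)\|^{2p} \leq C$ uniformly in $n$ by applying the It\^o formula to $\|\bar{u}_q^n(t)\|^{2p}$ (or, more robustly, to a Lyapunov functional of the form $V_1(t,\bar u_q^n(t))$ built from the function $V$ of Assumption \ref{assum1}, as in Lemma \ref{lem2}), taking expectations, and closing a Gronwall-type inequality. First I would recall from Lemma \ref{theo1b} that the continuous numerical solution satisfies the inherent regular SDE \eqref{equa16b} with piecewise-frozen coefficients evaluated at $k_n(t)$, and that after truncation the coefficients $\hat f_q$, $\hat g_q$ and the term $A^-(k_n(t))B(k_n(t))[\bar u_q^n + \hat v^n]$ are globally bounded by a constant $L_q$ (using (A2.1), (A2.3), Remark \ref{def4}, and the global Lipschitz/linear-growth bound on $\hat v^n$ with constant $L_{\hat v}$ independent of $n$). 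Writing $\bar u_q^n(t)$ via its integral form and applying It\^o's formula to $t\mapsto \|\bar u_q^n(t)\|^{2p}$, the drift contributes terms controlled by $\|\bar u_q^n(t)\|^{2p-1}$ times a bounded quantity (hence by $1+\|\bar u_q^n(t)\|^{2p}$ via Young's inequality) plus the boundedness of $P'$ and $A^-B$; the It\^o-correction term contributes $\|\bar u_q^n(t)\|^{2p-2}|\hat g_q|_F^2 \leq C(1+\|\bar u_q^n(t)\|^{2p})$ since $|\hat g_q|_F^2 \leq M_q$.

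Next I would take expectations: the stochastic integral $\int_0^t 2p\,\|\bar u_q^n(s)\|^{2p-2}\langle \bar u_q^n(s), \hat g_q(\cdot)\,dW(s)\rangle$ is a genuine martingale (its integrand has finite second moment by the boundedness just described, which can be justified by a standard localization/stopping-time argument first and then removing the stopping time by Fatou), so it drops out. This leaves
\begin{equation*}
	\mathbb{E}\|\bar u_q^n(t)\|^{2p} \leq \mathbb{E}\|\bar u_q^n(0)\|^{2p} + C\int_0^t \left(1 + \mathbb{E}\|\bar u_q^n(s)\|^{2p}\right)\,ds,
\end{equation*}
where $C$ depends on $p$, $q$, $T$, $M_q$, $\|A^-\|_\infty$, $\|B\|_\infty$, $\|P'\|_\infty$ and $L_{\hat v}$, but \emph{not} on $n$, and $\mathbb{E}\|\bar u_q^n(0)\|^{2p} = \|P(0)\zeta\|^{2p}$ is finite and $n$-independent. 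Gronwall's inequality then yields $\mathbb{E}\|\bar u_q^n(t)\|^{2p} \leq C e^{CT}$ for all $t\in[0,T]$, which is the claimed bound.

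One technical point deserves care, and it is the step I expect to be the main obstacle: the scheme \eqref{equa16b} is \emph{semi-implicit}, so $\hat v^n(t_{i+1}^n,u_{i+1})$ and the linear terms $P'[\,u_{i+1}+\hat v^n\,]$, $A^-B[\,u_{i+1}+\hat v^n\,]$ involve the unknown $u_{i+1}$; consequently the continuous interpolant $\bar u_q^n$ is not a clean It\^o process with adapted integrand unless one is careful. I would handle this by using the equivalent non-anticipating representation: solve the implicit relation locally (the matrix $D_h(t_i^n)=(A(t_i^n)-\theta B(t_i^n))^{-1}$ in (A2.3) guarantees solvability and a bound uniform in $i,n$), express $u_{i+1}$ and hence $\bar u_q^n(t)$ for $t\in[t_i^n,t_{i+1}^n]$ in terms of $u_i$, the frozen coefficients at $t_i^n$, and $W(t)-W(t_i^n)$, so that the resulting continuous-time equation has $\mathcal F_{k_n(t)}$-measurable (hence adapted) integrands with the same uniform bounds; alternatively one absorbs the implicit increment by noting $\|u_{i+1}-u_i\| \leq C h(1+\|u_{i+1}\|) + C\|\Delta W_i\|$ and, for $h$ small enough that $Ch \leq \tfrac12$, rearranging to bound $\|u_{i+1}\|$ and $\sup_{t\in[t_i^n,t_{i+1}^n]}\|\bar u_q^n(t)\|$ by $C(1+\|u_i\| + \|\Delta W_i\|)$. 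Either route reduces the problem to the standard Gronwall argument above; the bookkeeping to keep all constants independent of $n$ (and to control $\hat v^n$ uniformly in $n$, which is exactly what Lemma \ref{theo1b} and \cite{serea2025existence} provide) is the part requiring the most attention.
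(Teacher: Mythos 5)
Your proposal is correct in substance and arrives at exactly the same closing step as the paper (a Gronwall inequality for $\mathbb{E}\|\bar u_q^n(t)\|^{2p}$ with an $n$-independent constant), but the route to the pre-Gronwall estimate differs. The paper does not use It\^o's formula at all: it writes $\bar u_q^n(t)$ in its integral form \eqref{equa16b}, applies the elementary power-mean inequality $(a_1+\dots+a_5)^{2p}\le 5^{2p-1}\sum a_i^{2p}$, H\"older's inequality on the Lebesgue integrals, and the $L^p$ moment inequality for stochastic integrals (Lemma \ref{lem3}) on the noise term, together with the boundedness of $\hat f_q,\hat g_q$ by $L_q$ and the $n$-uniform Lipschitz bound $L_{\hat v}$ on $\hat v^n$. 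You instead apply It\^o's formula to $\|\bar u_q^n(t)\|^{2p}$, kill the stochastic integral as a martingale after a localization step, and use Young's inequality on the drift and the It\^o correction. Both are standard and both rely on the same structural inputs (truncated coefficients bounded, $\hat v^n$ Lipschitz uniformly in $n$, matrices bounded by (A2.3)); your route needs the extra localization/Fatou argument to justify the vanishing of the martingale, while the paper's route needs Lemma \ref{lem3} instead. Your worry about adaptedness of the semi-implicit interpolant is in fact moot for this lemma, because the dual continuous form \eqref{equa16b} already has $\mathcal F_t$-measurable integrands (the drift at time $t$ depends on $\bar u^n(t)$ and $\bar u^n(k_n(t))$, both adapted), so It\^o's formula applies directly; your fallback arguments are harmless but unnecessary. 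One small imprecision: you assert that the term $A^-(k_n(t))B(k_n(t))[\bar u_q^n+\hat v^n]$ is \emph{globally bounded} by $L_q$ after truncation — it is not (it grows linearly in $\|\bar u_q^n\|$, since only $f$ and $g$ are truncated, not the linear part); however, your subsequent estimate correctly treats the drift as contributing $C(1+\|\bar u_q^n(t)\|^{2p})$, so the slip is only in the description and does not affect the validity of the argument.
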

\begin{proof}	
	 We want to show that  for  $s\in \left[ 0,T\right]$  we  have  $\mathbb{E}\left\|  \bar{u}_q^{n}(s)\right\|^{2p}<\infty.$
	 	From \eqref{equa16b} we have
	\begin{align*}
		\mathbb{	E}\left\|  \bar{u}_q^{n}(s)\right\|^{2p}&=\mathbb{	E}\left\| \int_{0}^{t} P'(k_n(s))\left[ 	\bar{u}_q^n(s)+\hat{v}^n(s,	\bar{u}_q^n(s))\right]ds +\bar{u}_q^n(0)\nonumber\right.\\
		&\left.+\int_{0}^{t}A^-(k_n(s)) B(k_n(s))\left[	\bar{u}_q^n(s)+\hat{v}^n(s,	\bar{u}_q^n(s))\right]ds \right.  \nonumber\\
		&\left.+ \int_{0}^{t}\hat{f}_q(k_n(s),\bar{u}_q^n(k_n(s)))ds+\int_{0}^{t}\hat{g}_q(k_n(s),\bar{u}_q^n(k_n(s)))dW(s)\right\| ^{2p}.
                \end{align*}
        Using Assumption \ref{assum2} 
		\begin{align*}
		&\mathbb{	E}\left\|  \bar{u}_q^{n}(s)\right\|^{2p}
		\leq 5^{2p-1}\left( \mathbb{	E}\left\| \int_{0}^{t} P'(k_n(s))\left[ \bar{u}_q^n(s)+ \hat{v}^n(s,	\bar{u}_q^n(s))\right]ds\right\| ^{2p}\right. \\
		&\left.+\mathbb{	E}\left\| \int_{0}^{t}A^-(k_n(s)) B(k_n(s))\left[	\bar{u}_q^n(s)+\hat{v}^n(s,	\bar{u}_q^n(s))\right]ds\right\| ^{2p}+\mathbb{	E}\left\| \bar{u}^n_q(0)\right\| ^{2p}\right. \\
		&\left.+\mathbb{	E}\left\| \int_{0}^{t}\hat{f}_q(k_n(s),\bar{u}^n_q(k_n(s)))ds\right\| ^{2p}+\mathbb{	E}\left\| \int_{0}^{t}\hat{g}_q(k_n(s),\bar{u}_q^n(k_n(s)))dW(s)\right\| ^{2p}\right) \\
&\leq 5^{2p-1}\left(T^{2p-1}K_1\mathbb{	E} \int_{0}^{t}\left\| \bar{u}_q^n(s)+ \hat{v}^n(s,	\bar{u}_q^n(s))\right\|^{2p} ds+\mathbb{	E}\left\| \bar{u}^n_q(0)\right\| ^{2p}\right.\\
		&\left.+  T^{2p-1}\int_{0}^{t}\mathbb{	E}\left\|\hat{f}_q(k_n(s),\bar{u}_q^n(k_n(s)))\right\| ^{2p}ds+ C(p)\int_{0}^{t}\mathbb{	E}\left\|\hat{g}_q(k_n(s),\bar{u}_q^n(k_n(s)))\right\| ^{2p}ds\right) .
        \end{align*}
        Using the fact that
        \begin{align*}
        \left\| \bar{u}_q^n(s)+ \hat{v}^n(s,	\bar{u}_q^n(s))\right\|^{2p}&=\left\| \bar{u}_q^n(s)+ \hat{v}^n(s,	\bar{u}_q^n(s))-\hat{v}^n(s,0)+\hat{v}^n(s,0)\right\|^{2p}\\
        &\leq 3^{2p-1}(\left\| \bar{u}_q^n(s)\right\|^{2p}+ \left\|\hat{v}^n(s,	\bar{u}_q^n(s))-\hat{v}^n(s,0)\right\|^{2p}+\left\|\hat{v}^n(s,0)\right\|^{2p})\\
        &\leq 3^{2p-1}((1+L_{\hat{v}})\left\| \bar{u}_q^n(s)\right\|^{2p}+\left\|\hat{v}^n(s,0)\right\|^{2p})
        \end{align*}
        we obtain
        \begin{align}\label{equa39b}
		&\mathbb{	E}\left\|  \bar{u}_q^{n}(s)\right\|^{2p}
		\leq 5^{2p-1}T^{2p-1}K_1 3^{2p-1}K_1\int_{0}^{t}(1+L_{\hat{v}})\mathbb{	E}\left\| \bar{u}_q^n(s) \right\|^{2p} +\left\| \hat{v}^n(s,0)\right\|^{2p}ds\nonumber\\
        &+ C(p,L_q, K_1,\zeta) \nonumber\\
		&\leq 5^{2p-1}T^{2p-1}K_13^{2p-1}(1+L_{\hat{v}})\int_{0}^{t}\mathbb{	E}\left\| \bar{u}_q^n(s) \right\|^{2p} ds+C(L_{\hat{v}},\bar{u}^n_q(0),p,L_q, K_1) .
	\end{align}
	Using Gronwall's Lemma, we have,  for  $s\in \left[0,T \right]$
	\begin{align}\label{equa39bb}
		\mathbb{	E}\left\|  \bar{u}_q^{n}(s)\right\|^{2p}&\leq C(s,L_{\hat{v}},p,L_q, T,K, \zeta).
	\end{align}
    Note that we have used the fact that
    \begin{align*}
    \left\|\hat{v}^n(t,0)\right\|&=\left\|(A(k_n(t)+R(k_n(t))B(k_n(t))^{-1}f_{1q}(k_n(t),0)\right\|\\
    & \leq \left\|(A(k_n(t)+R(k_n(t))B(k_n(t))^{-1}\right\|\left\|f_{1q}(k_n(t),0)\right\|\\
    &\leq K_1^2L_q. 
    \end{align*}
    Here, the constant $K_1$ is defined in Assumption \ref{assum2}, and the constant $L_q$ is defined in Remark \ref{def4}.\\
    
    Note that $f_{1q}$ is the function that agrees with $f_1$ in $D_q$ according to Remark \ref{def4} and $f_1$ is defined in \eqref{chapeau}.
\end{proof}
The next lemma is the regularity result of our numerical continuous form \eqref{equa3b}.
\\ 
\begin{lemma}\label{theo3a}
	Let $\bar{X}^n_q(t))$ be the  solution of $\eqref{equa3b}$ where the coefficients $f(\cdot,\cdot)$ and $g(\cdot,\cdot)$ are replaced  by the functions   $f_q(\cdot,\cdot)$ and $g_q(\cdot,\cdot)$ respectively defined in the Remark \ref{def4}. Then
	for all $p\geq 2$, there exists a constant  $\hat{Z}$ independent to n such  that:
	\begin{eqnarray}\label{equa39}
		\mathbb{E}\left[\left\|\bar{X}^n_q(t)-\bar{X}^n_q(t_i^n) \right\|^p \right]\leq  \hat{Z}\left| t-t_i^n\right|^{p/2} , ~~~~~~~~~~~~~~~~ t\in [t_i^n,t_{i+1}^n),
	\end{eqnarray}
	with  $t_{i+1}^n=t_i^n+\frac{T}{n}=\frac{(i+1)T}{n},~i< n\in\mathbb{N}$.
\end{lemma}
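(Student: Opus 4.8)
The plan is to transfer the problem to the inherent numerical process $\bar{u}^n_q(\cdot)$ solving the regular SDE \eqref{equa16b} (with $f,g$ replaced by the bounded, globally Lipschitz functions $f_q,g_q$ of Remark \ref{def4}), since the singularity of $A(\cdot)$ forbids inverting \eqref{equa3b} directly. By Lemma \ref{theo1b} the numerical continuous process satisfies $\bar{X}^n_q(t)=\bar{u}^n_q(t)+\hat{v}^n(t,\bar{u}^n_q(t))$, so for $t\in[t_i^n,t_{i+1}^n)$
\begin{equation*}
\bar{X}^n_q(t)-\bar{X}^n_q(t_i^n)=\bigl(\bar{u}^n_q(t)-\bar{u}^n_q(t_i^n)\bigr)+\bigl(\hat{v}^n(t,\bar{u}^n_q(t))-\hat{v}^n(t_i^n,\bar{u}^n_q(t_i^n))\bigr).
\end{equation*}
On this subinterval $k_n(t)=t_i^n=k_n(t_i^n)$, so the time-frozen coefficients inside $\hat{v}^n$ coincide at $t$ and at $t_i^n$; invoking the global (in the second variable) Lipschitz property of $\hat{v}^n$ with an $n$-independent constant $L_{\hat{v}}$, recalled in the proof of Lemma \ref{theo1b}, the second bracket is bounded in norm by $L_{\hat{v}}\,\|\bar{u}^n_q(t)-\bar{u}^n_q(t_i^n)\|$. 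Hence it suffices to prove $\mathbb{E}\|\bar{u}^n_q(t)-\bar{u}^n_q(t_i^n)\|^p\le C\,|t-t_i^n|^{p/2}$ with $C$ independent of $n$, after which \eqref{equa39} follows with $\hat{Z}=(1+L_{\hat{v}})^p\,C$.

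Writing the integral form of \eqref{equa16b} on $[t_i^n,t]$ expresses $\bar{u}^n_q(t)-\bar{u}^n_q(t_i^n)$ as the sum of three Lebesgue integrals (the $P'$-term, the $A^-B$-term and the $\hat{f}_q$-term) and one It\^o integral (the $\hat{g}_q$-term). Raising to the power $p$ and using $\|a_1+\dots+a_4\|^p\le 4^{p-1}\sum_{j=1}^4\|a_j\|^p$, I would estimate the three drift integrals by Jensen's inequality, $\bigl\|\int_{t_i^n}^t\phi(s)\,ds\bigr\|^p\le |t-t_i^n|^{p-1}\int_{t_i^n}^t\|\phi(s)\|^p\,ds$. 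For the $P'$- and $A^-B$-integrands I would combine the boundedness of $P',A^-,B$ from (A2.3) of Assumption \ref{assum2}, the bound $\|\hat{v}^n(s,u)\|\le L_{\hat{v}}\|u\|+K_1^2L_q$ obtained as in the proof of Lemma \ref{lemef}, and the $n$-independent moment bound $\sup_{s\le T}\mathbb{E}\|\bar{u}^n_q(s)\|^p\le C$ of Lemma \ref{lemef}; for the $\hat{f}_q$-integrand I would use $\hat{f}_q=A^-f_q$ with $\|f_q\|\le L_q$ from Remark \ref{def4}. Each drift term is then $\le C\,|t-t_i^n|^p\le C\,T^{p/2}\,|t-t_i^n|^{p/2}$, since $|t-t_i^n|\le T/n\le T$.

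For the stochastic term I would apply Lemma \ref{lem3} on the interval $[t_i^n,t]$:
\begin{equation*}
\mathbb{E}\Bigl\|\int_{t_i^n}^{t}\hat{g}_q(k_n(s),\bar{u}^n_q(k_n(s)))\,dW(s)\Bigr\|^p\le \Bigl(\tfrac{p(p-1)}{2}\Bigr)^{p/2}|t-t_i^n|^{\frac{p-2}{2}}\int_{t_i^n}^{t}\mathbb{E}\bigl\|\hat{g}_q(k_n(s),\bar{u}^n_q(k_n(s)))\bigr\|^p\,ds,
\end{equation*}
and then bound $\hat{g}_q=A^-g_q$ by a constant depending only on $q$ (boundedness of $g_q$ and of $A^-$); the right-hand side is $\le C\,|t-t_i^n|^{p/2}$, which is the term that dictates the final rate $p/2$. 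Collecting the four contributions yields $\mathbb{E}\|\bar{u}^n_q(t)-\bar{u}^n_q(t_i^n)\|^p\le C\,|t-t_i^n|^{p/2}$, and combining with the first paragraph gives \eqref{equa39}. The main difficulty is not any individual estimate but organising everything around the singular matrix: one is forced to work with the inherent SDE \eqref{equa16b}, to use the global-in-$u$ Lipschitz continuity of the implicit constraint map $\hat{v}^n$, and to rely on the $n$-independent moment bounds of Lemma \ref{lemef}, because the drift of \eqref{equa16b} contains $\bar{u}^n_q(s)$ itself and cannot be controlled pathwise; keeping track that every constant ($L_{\hat{v}}$, the matrix bounds, $C$) is independent of $n$ is the other point requiring care.
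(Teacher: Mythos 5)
Your proposal is correct and follows essentially the same route as the paper: reduce to the inherent process via $\bar{X}^n_q=\bar{u}^n_q+\hat{v}^n(\cdot,\bar{u}^n_q)$, absorb the constraint part through the $n$-independent Lipschitz constant $L_{\hat v}$ (your remark that $k_n(t)=k_n(t_i^n)$ on the subinterval, so only the second argument of $\hat{v}^n$ varies, is a point the paper leaves implicit), then estimate the increment of $\bar{u}^n_q$ by Jensen on the drift integrals and Lemma \ref{lem3} on the stochastic integral. The one structural difference is at the end: the paper splits $\|\bar{u}^n_q(z)\|^p$ into $\|\bar{u}^n_q(z)-\bar{u}^n_q(k_n(z))\|^p+\|\bar{u}^n_q(k_n(z))\|^p$ and closes the estimate with Gr\"onwall's inequality, whereas you invoke the uniform moment bound $\sup_{s\le T}\mathbb{E}\|\bar{u}^n_q(s)\|^{p}\le C$ of Lemma \ref{lemef} directly on the drift integrand, which eliminates the Gr\"onwall step entirely. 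Since Lemma \ref{lemef} is already established independently, your shortcut is legitimate and slightly cleaner; both arguments yield the same constant structure $\hat{Z}=(1+L_{\hat v})^p C$ with all constants independent of $n$.
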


\begin{proof} 
    The idea in the proof is to use the fact that the solution is the sum of algebraic and stochastic variables.
From   \eqref{xx},   we have
\begin{align}\label{equa40a}
	\mathbb{E}\left[\left\| \bar{X}^n_q(t)-\bar{X}^n_q(t_i^n)\right\|^p \right]&=  	\mathbb{E}\left[\left\| \bar{u}^n_q(t)+\hat{v}^n(t,\bar{u}^n_q(t))-\bar{u}^n_q(t_i^n)-\hat{v}^n(t_i^n, \bar{u}^n_q(t_i^n))\right\|^p \right]\nonumber~~~~~~~~~~~~~~~~~~~~~~~~~~~~\\
	&\leq 	\mathbb{E}\left[\left( \left\| \bar{u}^n(t)-\bar{u}^n_q(t_i^n)\right\|+L_{\hat{v}} \left\| \bar{u}^n_q(t)-\bar{u}^n_q(t_i^n)\right\|\right) ^p \right]\nonumber\\
	& \leq (1+L_{\hat{v}})^p \mathbb{E}\left[\left\| \bar{u}^n_q(t)-\bar{u}^n_q(t_i^n)\right\|^p  \right],~t\in  [t_i^n,t_{i+1}^n).
\end{align}

where $\bar{u}^n_q(\cdot)$ is a solution of the inherent SDEs \eqref{equa16b} associated to \eqref{equa10b}. \\

For all positive numbers $a,b$ and $c$, it is well known that  $$(a+b+c)^p\leq 3^{p-1}(a^p+b^p+c^p).$$
From the representation \eqref{equa16b}, for $t\in [t_i^n,t_{i+1}^n)$, we have
\begin{eqnarray*}
	\bar{u}^n_q(t)-\bar{u}^n_q(t_i^n)=\int_{t_i^n}^{t} (P'(k_n(z))+A^-(k_n(z))B(k_n(z)))(\bar{u}^n_q(z)+\hat{v}^n(z,\bar{u}^n_q(z)))dz\\
    +\int_{t_i^n}^{t}\hat{f}_q(k_n(z),\bar{u}^n_q(k_n(z)))dz
	+ \int_{t_i^n}^{t}\hat{g}_q(k_n(z),\bar{u}^n_q(k_n(z)))dW(z).
\end{eqnarray*}
We therefore have
\begin{align*}
	&\left\|	\bar{u}^n_q(t)-\bar{u}^n_q(t_i^n) \right\|^p \\
    & \leq 3^{p-1}\left[\left\|\int_{t_i^n}^{t} (A^-(k_n(z))B(z)+P'(k_n(z)))(\bar{u}^n_q(z)+\hat{v}^n(z,\bar{u}^n_q(z)))dz \right\|^p\right.\\
	&\left.+\left\|\int_{t_i^n}^{t}\hat{f}_q(k_n(z),\bar{u}^n_q(k_n(z)))dz \right\|^p\right]+3^{p-1}\left\|\int_{t_i^n}^{t}\hat{g}_q(k_n(z),\bar{u}^n_q(k_n(z)))dW(z) \right\|^p  \\
	&\leq 3^{p-1}\left|t-t_i^n \right|^{p-1} \left[ \int_{t_i^n}^{t} \left\|(A^-(k_n(z))B(k_n(z))+P'(k_n(z)))(\bar{u}^n_q(z)+\hat{v}^n(z,\bar{u}^n_q(z)))\right\|^pdz \right.\\ 
	&\left.+\int_{t_i^n}^{t}\left\|\hat{f}_q(k_n(z),\bar{u}^n_q(k_n(z)))\right\|^pdz\right] +3^{p-1}\left\|\int_{t_i^n}^{t}\hat{g}_q(k_n(z),\bar{u}^n_q(k_n(z)))dW(z) \right\|^p.
    \end{align*}
    By taking the expectation in both sides, we have
    \begin{align*}
	&\mathbb{E}\left[\left\|	\bar{u}^n_q(t)-\bar{u}^n_q(t_i^n) \right\|^p\right]\\ 
    & \leq 3^{p-1}\left|t-t_i^n \right|^{p-1}\left(  \int_{t_i^n}^{t} \mathbb{E}\left[\left\|(A^-(z)B(z)+P'(z))(\bar{u}^n_q(z)+\hat{v}^n(z,\bar{u}^n_q(z)))\right\|^p\right]dz\right.\\ 
	&\left.+\int_{t_i^n}^{t}\mathbb{E}\left[\left\|\hat{f}_q(k_n(z),\bar{u}^n_q(k_n(z)))\right\|^p\right]  dz\right)+3^{p-1}\mathbb{E}\left[\left\|\int_{t_i^n}^{t}\hat{g}_q(k_n(z),\bar{u}^n_q(k_n(z)))dW(z)\right] \right\|^p .
\end{align*}
Let us set  $M=2^p(K_1^{2p}+K_1^p)$. We obviously have  $$\max_{t\in \left[ 0,T\right] }(\left| (A^-(t)B(t)+P'(t))\right|^p_F )\leq 2^p(K_1^{2p}+K_1^p) =M.$$
From Lemma \ref{lem3}, Assumption \ref{assum2} and Remark \ref{def4}, we have
\begin{align*}
	&\mathbb{E}\left[\left\|	\bar{u}^n_q(t)-\bar{u}^n_q(t_i^n)) \right\|^p \right] 	\leq3^{p-1}M\left|t-t_i^n \right|^{p-1}  \int_{t_i^n}^{t} \mathbb{E}\left\|(\bar{u}^n_q(z)+\hat{v}^n(z, \bar{u}^n_q(z)))\right\|^pds\\
    &+3^{p-1}M\left|t-t_i^n \right|^{p-1} \int_{t_i^n}^t \mathbb{E}\left\|\hat{f}_q(k_n(z),\bar{u}^n_q(k_n(z)))\right\|^p  dz \nonumber\\
	&+3^{p-1}M\left( \dfrac{p}{2}\right) ^{p/2}(p-1)^{p/2}\left| t-t_i^n\right| ^{\frac{p-2}{2}}\int_{t_i^n}^{t}\mathbb{E}\left[\left| \hat{g}_q(k_n(z),\bar{u}^n_q(k_n(z)))\right|^p\right] dz\nonumber\\
	&\leq 3^{p-1}M\left|t-t_i^n \right|^{p-1}  \int_{t_i^n}^{t} \mathbb{E}\left\|\bar{u}^n_q(z)+\hat{v}^n(z,\bar{u}^n_q(z))-\hat{v}^n(0,z)+\hat{v}^n(0,z)\right\|^pdz  \nonumber\\
	&+ 3^{p-1}M\left|t-t_i^n \right|^{p-1}  \int_{t_i^n}^{t}L_q dz+3^{p-1}M\left( \dfrac{p}{2}\right) ^{p/2}(p-1)^{p/2}(t-t_i^n)^{\frac{p-2}{2}}\int_{t_i^n}^{t} L_qdz.
       \end{align*}
       Using the global Lipschitz on $\hat{v}^n$ with respect to the variable $\bar{u}_q^n$ we have
    \begin{eqnarray*}
	&\mathbb{E}\left[\left\|	\bar{u}^n_q(t)-\bar{u}^n_q(t_i^n)) \right\|^p \right]
	\leq 3^{2p-2}M\left|t-t_i^n \right|^{p-1}  \int_{t_i^n}^{t} (1+L_{\hat{v}})\mathbb{E}\left\|\bar{u}^n_q(z)\right\|^p+\left|\hat{v}^n(0,z)\right\|^pdz
	 \nonumber\\
	&+ 3^{p-1}ML_q\left|t-t_i^n \right|^{p}+3^{p-1}ML_q\left( \dfrac{p}{2}\right) ^{p/2}(p-1)^{p/2}\left| t-t_i^n\right| ^{\frac{p}{2}}~~~~\nonumber\\	
    &\leq 3^{2p-2}M\left| t-t_i^n\right|^{p-1}(1+L_{\hat{v}})  \int_{t_i^n}^{t} \mathbb{E}\left\|\bar{u}^n_q(z)\right\|^pdz
	+ Z_1\left| t-t_i^n\right| ^{\frac{p}{2}}.	
    \end{eqnarray*}
    Using the fact that 
    \begin{align*}
    \left\|\bar{u}^n_q(z)\right\|^p&=\left\|\bar{u}^n_q(z)-\bar{u}^n_q(k_n(z))+\bar{u}^n_q(k_n(z))\right\|^p\\
    &\leq 2^{p-1}(\left\|\bar{u}^n_q(z)-\bar{u}^n_q(k_n(z))\right\|^p+\left\|\bar{u}^n_q(k_n(z))\right\|^p),
    \end{align*}
    we have
    \begin{align*}
    \mathbb{E}\left[\left\|	\bar{u}^n_q(t)-\bar{u}^n_q(t_i^n)) \right\|^p \right] 
	& \leq 3^{2p-2}2^{p-1}M| t-t_i^n|^{p-1}(1+L_{\hat{v}})  \int_{t_i^n}^{t} \mathbb{E}\left\|\bar{u}^n_q(z)-\bar{u}^n_q(k_n(z))\right\|^pdz\nonumber\\
    &+3^{2p-2}2^{p-1}MT^{p-1}(1+L_{\hat{v}})  \int_{t_i^n}^{t}\mathbb{E}\left\|\bar{u}^n_q(k_n(z))\right\|^pdz
	+ Z_1\left| t-t_i^n\right| ^{\frac{p}{2}}
\end{align*}
with  $$ Z_1=MT^{\frac{p}{2}}(3^{2p-2}\max_{t\in [0,T]}\left(\| \hat{v}^n(t,0)\| \right)+3^{p-1}L_q )+3^{p-1}ML_{\hat{v}}\left( \dfrac{p}{2}\right) ^{p/2}(p-1)^{p/2}.$$

Using Lemma \ref{lemef} we have
\begin{align*}
	\mathbb{E}\left[\left\|	\bar{u}^n_q(t)-\bar{u}^n_q(t_i^n) \right\|^p \right] 	 & \leq 3^{2p-2}2^{p-1}MT^{p-1}(1+L_{\hat{v}})  \int_{t_i^n}^{t} \mathbb{E}\left\|\bar{u}^n_q(z)-\bar{u}^n_q(k_n(z))\right\|^pdz\nonumber\\
    & +3^{2p-2}2^{p-1}MT^{p-1}(1+L_{\hat{v}})| t-t_i^n|^{p} C(t)
	+ Z_1\left| t-t_i^n\right| ^{\frac{p}{2}}.
    \end{align*}
 Using Gr\"onwall inequality we have
    \begin{align}\label{equa41a}
	\mathbb{E}\left[\left\|	\bar{u}^n_q(t)-\bar{u}^n_q(t_i^n) \right\|^p \right] 	 &\leq Z\left| t-t_i^n\right| ^{\frac{p}{2}},
      \end{align}
          with $Z=(3^{2p-2}2^{p-1}MT^{p-1}(1+L_{\hat{v}})T^{\frac{T}{2}} C(t)+Z_1)\exp{\left(3^{2p-2}2^{p-1}MT^{p}(1+L_{\hat{v}})\right)}.$
Combining \eqref{equa40a} and \eqref{equa41a} allows to have
\begin{align*}
	\mathbb{E}\left[\left\| \bar{X}^n_q(t)-X_q(t_i^n)\right\|^p \right]& \leq (1+L_{\hat{v}})^p  Z\left| t-t_i^n\right| ^{\frac{p}{2}}=  \hat{Z}\left| t-t_i^n\right| ^{\frac{p}{2}} , t\in [t_i^n,t_{i+1}^n),
\end{align*}
with $\hat{Z}=(1+L_{\hat{v}})^p  Z$ and this ends the proof.
\end{proof}
The next lemma is the approximation between the implicit function $\hat{v}$ and its numerical approximation $\hat{v}^n$.
\begin{lemma}\label{v}
Let $\hat{v}$ be the implicit function from inherent regular SDE \eqref{equa7A} and $\hat{v}^n$ its numerical approximation defined in  \eqref{equa8b}, then for two time depending functions u an w defined on  $D=\cup_{q=1}^{\infty}D_q$ that agree with $u_q$ and $w_q$ on $D_q, q\geq1$, we have the following inequality
  \begin{align*}
       \|\hat{v}^n(t,	u_q(t))- \hat{v}(t,w_q(t))\|^2
       &\leq C\left(\frac{1}{n^2}+\frac{\|u_q(t)\|^2}{n^2} +\|u_q(t)-u_q(k_n(t))\|^2\right)\\
        &+C\|w_q(t)-u_q(t)\|^2,
    \end{align*} 
    C is the constant independent to $n \in \mathbb{N}$.
    \end{lemma}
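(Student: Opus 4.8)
The plan is to exploit the explicit representations of $\hat{v}$ and $\hat{v}^n$. From the constraint equation \eqref{equa5A} the continuous implicit function satisfies
\[
\hat{v}(t,x)=-(A(t)+R(t)B(t))^{-1}\big[R(t)B(t)x+R(t)f_q(t,x+\hat{v}(t,x))\big],
\]
while \eqref{equa8b} gives
\[
\hat{v}^n(t,x)=-(A(k_n(t))+R(k_n(t))B(k_n(t)))^{-1}\big[R(k_n(t))B(k_n(t))x+R(k_n(t))f_q(k_n(t),x+\hat{v}^n(k_n(t),x))\big];
\]
in particular $\hat{v}^n$ is the exact implicit function with the time variable frozen at the grid point, so that $\hat{v}^n(k_n(t),x)=\hat{v}(k_n(t),x)$. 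I will also use that $\hat{v}(t,\cdot)$ is globally Lipschitz with a constant $L_{\hat{v}}$ independent of $n$ (established in the discussion following \eqref{equa8b}), that $\hat{v}(\cdot,x)$ has a time modulus of continuity inherited from the $C^1$-regularity of $A,R,B$, the non-singularity of $A+RB$ on the compact $[0,T]$, and the time-regularity of $f_q$, and that $A^-,B,R,(A+RB)^{-1}$ are bounded and Lipschitz by Assumption \ref{assum2}\,(A2.3).

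First I split, by the triangle inequality,
\begin{align*}
\|\hat{v}^n(t,u_q(t))-\hat{v}(t,w_q(t))\|\le{}&\|\hat{v}^n(t,u_q(t))-\hat{v}(k_n(t),u_q(k_n(t)))\|\\
&+\|\hat{v}(k_n(t),u_q(k_n(t)))-\hat{v}(t,u_q(k_n(t)))\|\\
&+\|\hat{v}(t,u_q(k_n(t)))-\hat{v}(t,w_q(t))\|=:\mathrm{I}+\mathrm{II}+\mathrm{III}.
\end{align*}
Since $\hat{v}^n(k_n(t),u_q(k_n(t)))=\hat{v}(k_n(t),u_q(k_n(t)))$ and $\hat{v}^n(t,u_q(t))$ obeys the same frozen algebraic relation except with $u_q(t)$ in place of $u_q(k_n(t))$ in the linear term, subtracting the two representations leaves $\mathrm{I}=\|(A(k_n(t))+R(k_n(t))B(k_n(t)))^{-1}R(k_n(t))B(k_n(t))(u_q(t)-u_q(k_n(t)))\|\le C\|u_q(t)-u_q(k_n(t))\|$ by (A2.3). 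For $\mathrm{III}$ the Lipschitz property of $\hat{v}(t,\cdot)$ gives $\mathrm{III}\le L_{\hat{v}}\|u_q(k_n(t))-w_q(t)\|\le L_{\hat{v}}\big(\|u_q(t)-u_q(k_n(t))\|+\|u_q(t)-w_q(t)\|\big)$.

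The core of the argument is $\mathrm{II}$, the time modulus of continuity of $\hat{v}$. Writing the implicit representation of $\hat{v}$ at the times $k_n(t)$ and $t$ with the common second argument $u_q(k_n(t))$, and subtracting, I bound the resulting differences term by term: the difference of the two inverse matrices applied to $R(k_n(t))B(k_n(t))u_q(k_n(t))$ is $\le C|t-k_n(t)|\,\|u_q(k_n(t))\|\le\tfrac{C}{n}\|u_q(k_n(t))\|$; the differences produced by $A,R,B,(A+RB)^{-1}$ acting on the term $Rf_q$, which is bounded by $L_q$, are $\le\tfrac{C}{n}$; and the remaining term $(A(t)+R(t)B(t))^{-1}R(t)\big[f_q(k_n(t),\cdot+\hat{v}(k_n(t),\cdot))-f_q(t,\cdot+\hat{v}(t,\cdot))\big]$ is split into the time increment of $f_q$ (which is $\le\tfrac{C}{n}$) and a term $\le C\,\mathrm{II}$, the latter being absorbed into the left-hand side, directly by viewing the time modulus of continuity of $\hat{v}$ as the unknown of a fixed-point inequality (or, if one prefers, for $n$ large). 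This yields $\mathrm{II}\le\tfrac{C}{n}\big(1+\|u_q(k_n(t))\|\big)\le\tfrac{C}{n}\big(1+\|u_q(t)\|+\|u_q(t)-u_q(k_n(t))\|\big)$.

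Finally I collect $\mathrm{I},\mathrm{II},\mathrm{III}$, square using $(a_1+a_2+a_3+a_4)^2\le4(a_1^2+a_2^2+a_3^2+a_4^2)$, and use $\tfrac1n\le1$ to absorb $\tfrac1{n^2}\|u_q(t)-u_q(k_n(t))\|^2$ and $\tfrac1{n^2}\|u_q(t)-w_q(t)\|^2$ into the corresponding un-scaled terms; this produces exactly
\[
\|\hat{v}^n(t,u_q(t))-\hat{v}(t,w_q(t))\|^2\le C\Big(\tfrac1{n^2}+\tfrac{\|u_q(t)\|^2}{n^2}+\|u_q(t)-u_q(k_n(t))\|^2\Big)+C\|u_q(t)-w_q(t)\|^2
\]
with $C$ independent of $n$. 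The step I expect to be the main obstacle is the bound on $\mathrm{II}$: the time modulus of continuity of the implicit function $\hat{v}$ must be controlled even though $\hat{v}$ appears inside the (only locally Lipschitz, here truncated) coefficient $f_q$, so an absorption/fixed-point argument is needed and one genuinely relies on regularity in time of $f_q$ on top of the matrix regularity provided by (A2.3); the bounds on $\mathrm{I}$ and $\mathrm{III}$ are immediate from the estimates already available.
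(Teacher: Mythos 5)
Your decomposition is genuinely different from the paper's. The paper never compares $\hat{v}$ and $\hat{v}^n$ through an intermediate time-frozen function; it subtracts the two explicit representations \eqref{equa5A} and \eqref{equa8b} directly, uses the algebraic identity $a_1b_1-a_2b_2=a_1(b_1-b_2)+(a_1-a_2)b_2$ to produce four terms (inverse matrix times difference of linear parts, difference of inverses times linear part, inverse times difference of the composite nonlinearity $f_{1q}$, difference of inverses times the bounded nonlinearity), and bounds each by (A2.3) plus the Lipschitz property of the composite $f_{1q}=Rf_q(\cdot,\cdot+\hat{v})$, which the paper simply asserts inherits the conditions on $f$. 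Your route instead isolates the exact identity $\hat{v}^n(k_n(t),x)=\hat{v}(k_n(t),x)$ (which is correct: at a grid time the two fixed-point equations coincide and the globally invertible Jacobian gives uniqueness) and reduces everything to the spatial Lipschitz constant $L_{\hat{v}}$ plus the time modulus of continuity of $\hat{v}$. Your terms $\mathrm{I}$ and $\mathrm{III}$ are fine, and your approach has the merit of making explicit where the time-regularity of $f_q$ enters, something the paper's estimate of its term $III$ quietly suppresses.

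The one step that is not justified as written is the absorption in $\mathrm{II}$. Unfolding the fixed-point equation at times $t$ and $k_n(t)$ gives $\mathrm{II}\le \tfrac{C}{n}(1+\|x\|)+\left\|(A(t)+R(t)B(t))^{-1}R(t)\right\|\,L_q\,\mathrm{II}$, and absorption into the left-hand side requires the coefficient $\left\|(A+RB)^{-1}R\right\|L_q$ to be strictly less than $1$; nothing in Assumptions \ref{assum1}--\ref{assum2} guarantees this, and your fallback ``for $n$ large'' does not help because that coefficient is independent of $n$ (only the inhomogeneous term shrinks). The repair is available inside the paper's hypotheses, but it is a different mechanism: differentiate the constraint \eqref{equa5A} in $t$ to get $J(t,x+\hat{v})\,\partial_t\hat{v}=-(A'\hat{v}+R'\mu+R\mu_t)$, and invoke the global invertibility of the Jacobian $J$ assumed in Theorem \ref{theo1} together with the boundedness of $f_q$ on $D_q$ to bound $\partial_t\hat{v}$, hence $\mathrm{II}\le \tfrac{C}{n}(1+\|x\|)$. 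With that substitution your argument closes and yields the stated inequality; without it, the contraction you rely on is unavailable in general.
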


   \begin{proof}
       From \eqref{equa8b} and \eqref{equa5A}, using the following composition rule
  $$a_1b_1-a_2b_2=a_1(b_1-b_2)+(a_1-a_2)b_2,$$ we have
      \begin{align*}
     	\|\hat{v}(t,w_q(t))&-\hat{v}^n(t,	u_q(t))\|^2=
        \left\|-(A(t)+R(t)B(t))^{-1}R(t)B(t)w_q(t) \right.\\
        &\left.-(A(t)+R(t)B(t))^{-1}f_{1q}(t,w_q(t))\right.\\
        &+\left. (A(k_n(t))+R(k_n(t))B(k_n(t)))^{-1}R(k_n(t))B(k_n(t))u_q(t)\right.\\
        &+\left. (A(k_n(t))+R(k_n(t))B(k_n(t)))^{-1}f_{1q}(k_n(t),u_q((k_n(t))) \right\|^2\\
        &\leq 2\left\|(A(t)+R(t)B(t))^{-1}R(t)B(t)w_q(t)\right.\\
        &\left.-(A(k_n(t))+R(k_n(t))B(k_n(t)))^{-1}R(k_n(t))B(k_n(t))u_q(t)\right\|^2\\
          &+2\left\|(A(t)+R(t)B(t))^{-1}f_{1q}(t,w_q(t))\right.\\
        &\left.- (A(k_n(t))+R(k_n(t))B(k_n(t)))^{-1}f_{1q}(k_n(t),u_q((k_n(t)))\right\|^2.
         \end{align*}
         Remember that $f_{1q}$ is the function that agrees  with $f_1=Rf$ on  $D_q, q\geq1$, $f_1$  been defined in \eqref{chapeau}.
         We therefore have 
             \begin{eqnarray}
             \label{eqm}
             \|\hat{v}(t,w_q(t))-\hat{v}^n(t,	u_q(t))\|^2
          \leq     4(I+II+III+IV),
         \end{eqnarray}
     where
     \begin{eqnarray*}
         I&:=&\left\|(A(t)+R(t)B(t))^{-1}\right\|^2\left\|R(t)B(t)w_q(t)-R(k_n(t))B(k_n(t))u_q(t) \right\|^2\\
        II &:=&\left\|(A(t)+R(t)B(t))^{-1}-(A(k_n(t))+R(k_n(t))B(k_n(t)))^{-1}\right\|^2\\
&\times&\left\|R(k_n(t))B(k_n(t))\bar{u}_q^n(t) \right\|^2\\
III&:=&\left\|(A(t)+R(t)B(t))^{-1}\right\|^2\left\|f_{1q}(t,w_q(t))-f_{1q}(k_n(t),u_q((k_n(t)))\right\|^2\\
IV&:=&\left\|(A(t)+R(t)B(t))^{-1}-(A(k_n(t))+R(k_n(t))B(k_n(t)))^{-1}\right\|^2\\
&\times&\left\|\hat{f}_q(k_n(t),u_q((k_n(t)))\right\|^2.
     \end{eqnarray*}
    Let us estimate each term. For the first term, we have the following. 
        \begin{eqnarray*}
        I&:=& \left\|(A(t)+R(t)B(t))^{-1}\right\|^2\left\|R(t)B(t)w_q(t)-R(k_n(t))B(k_n(t))u_q(t) \right\|^2\\
        &\leq& K_1^2\left\|R(t)B(t)\right\|^2\left\|w_q(t)-u_q(t) \right\|^2\\
       & +&K_1^2\left\|R(t)B(t)-R(k_n(t))B(k_n(t))\right\|^2\left\|u_q(t) \right\|^2\\
        &\leq & C(\left\|w_q(t)-u_q(t) \right\|^2+\|u_q(t) \|^2|t-k_n(t)|^2)\\
        &\leq & C(\left\|w_q(t)-u_q(t) \right\|^2+\frac{\|u_q(t) \|^2}{n^2}).
        \end{eqnarray*}
       For the second term, we also have     
\begin{eqnarray*}
II
&:=&\left\|(A(t)+R(t)B(t))^{-1}-(A(k_n(t))+R(k_n(t))B(k_n(t)))^{-1}\right\|^2\\
&\times&\left\|R(k_n(t))B(k_n(t))\bar{u}_q^n(t) \right\|^2\\
&\leq& C\|u_q(t) \|^2|t-k_n(t)|^2\\
&\leq& C\frac{\|u_q(t) \|^2}{n^2}
\end{eqnarray*}
For the estimation of the third term, we have
\begin{eqnarray*}
    III&:=&\left\|(A(t)+R(t)B(t))^{-1}\right\|^2\left\|f_{1q}(t,w_q(t))-f_{1q}(k_n(t),u_q((k_n(t)))\right\|^2\\
        &\leq&  K_1^2\left\|f_{1q}(t,w_q(t))-f_{1q}(k_n(t),u_q((k_n(t)))\right\|^2\\
&\leq& C\|w_q(t)-u_q(k_n(t)) \|^2.
\end{eqnarray*}
For the fourth term, we have the following. 
        \begin{eqnarray*}
IV&=&\left\|(A(t)+R(t)B(t))^{-1}-(A(k_n(t))+R(k_n(t))B(k_n(t)))^{-1}\right\|^2\\
&\times&\left\|\hat{f}_q(k_n(t),u_q((k_n(t)))\right\|^2\\
&\leq &L_qK_1^4|t-k_n(t)|^2\\
&\leq &\frac{C }{n^2}.
\end{eqnarray*}
 Using previous estimates in \eqref{eqm} yields the following.
         \begin{align*}
        \|\hat{v}(t,w_q(t))-\hat{v}^n(t,	u_q(t))\|^2&\leq C\|w_q(t)-u_q(k_n(t))-u_q(t)+u_q(t)\|^2\\
        & +\frac{C}{n^2}+C\|w_q(t)-u_q(t)\|^2+C\frac{\|u_q(t)\|^2}{n^2}\\
       &\leq \frac{C}{n^2}+C\frac{\|u_q(t)\|^2}{n^2} +C\|u_q(t)-u_q(k_n(t))\|^2\\
        &+C\|w_q(t)-u_q(t)\|^2.
    \end{align*} 
    This ends our proof.
     \end{proof}
     \begin{lemma}\label{v1}
Let $\hat{v}^n$ and $\hat{v}^m$ be the numerical approximations of $\hat{v}$  with $n$ and $m$ time subdivisions respectively.  For  two time depending functions u an w defined on  $D=\cup_{q=1}^{\infty}D_q$  that agree  with $u_q$ and  $w_q$ on $ D_q, q\geq1$,  we have the following inequality
  \begin{align*}
       \| \hat{v}^m(t,w_q(t))-\hat{v}^n(t,	u_q(t))\|^2
       &\leq C\left(\frac{1}{n^2} +\frac{1}{m^2}+\frac{\|u_q(t)\|^2}{n^2}+\frac{\|u_q(t)\|^2}{m^2}\right)\\
        &+C\left(\|u_q(t)-u_q(k_n(t))\|^2+\|w_q(t)-u_q(t)\|^2\right).
    \end{align*} 
    \end{lemma}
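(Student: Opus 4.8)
The plan is to deduce Lemma~\ref{v1} from Lemma~\ref{v} by inserting the \emph{exact} implicit function $\hat v$ as an intermediate point and then carrying out the bookkeeping. First I would write, using $(a+b)^2\le 2a^2+2b^2$,
\[
\|\hat v^m(t,w_q(t))-\hat v^n(t,u_q(t))\|^2\le 2\|\hat v^m(t,w_q(t))-\hat v(t,u_q(t))\|^2+2\|\hat v(t,u_q(t))-\hat v^n(t,u_q(t))\|^2 .
\]
The second term is exactly the situation covered by Lemma~\ref{v} with the mesh $n$ and with the two path-arguments both equal to $u_q$; it is therefore bounded by $C\big(n^{-2}+n^{-2}\|u_q(t)\|^2+\|u_q(t)-u_q(k_n(t))\|^2\big)$, the $\|w_q-u_q\|^2$-contribution being zero. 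For the first term I would apply Lemma~\ref{v} a second time, now with the mesh $m$, the discrete argument being $w_q$ and the exact argument being $u_q$; this gives $C\big(m^{-2}+m^{-2}\|w_q(t)\|^2+\|w_q(t)-w_q(k_m(t))\|^2\big)+C\|w_q(t)-u_q(t)\|^2$.

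It then remains to re-express the $w_q$-terms through $u_q$ and $w_q-u_q$. For the quadratic term one uses $\|w_q(t)\|^2\le 2\|u_q(t)\|^2+2\|w_q(t)-u_q(t)\|^2$ and $m^{-2}\le 1$, so that $m^{-2}\|w_q(t)\|^2$ is absorbed into $m^{-2}\|u_q(t)\|^2+\|w_q(t)-u_q(t)\|^2$. For the discretisation increment one inserts intermediate values,
\[
w_q(t)-w_q(k_m(t))=\big(w_q(t)-u_q(t)\big)+\big(u_q(t)-u_q(k_m(t))\big)+\big(u_q(k_m(t))-w_q(k_m(t))\big),
\]
so that $\|w_q(t)-w_q(k_m(t))\|^2$ is dominated by $\|w_q-u_q\|^2$ (at $t$ and at $k_m(t)$) together with the $u_q$-increment over $[k_m(t),t]$; collecting these contributions and recalling that the constant $K_1$ of Assumption~\ref{assum2} and the bounds on $f_{1q}$ from Remark~\ref{def4} are independent of $n$ and $m$ yields the asserted inequality.

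An alternative, and perhaps cleaner, route is to argue from scratch exactly as in the proof of Lemma~\ref{v}: write $\hat v^m(t,w_q(t))$ and $\hat v^n(t,u_q(t))$ out using \eqref{equa8b}, subtract, and split with the identity $a_1b_1-a_2b_2=a_1(b_1-b_2)+(a_1-a_2)b_2$ into four blocks --- (i) $(A(k_m(t))+R(k_m(t))B(k_m(t)))^{-1}$ acting on $R(k_m(t))B(k_m(t))w_q(t)-R(k_n(t))B(k_n(t))u_q(t)$, (ii) the difference of the two inverse matrices acting on $R(k_n(t))B(k_n(t))u_q(t)$, (iii) an inverse matrix acting on $f_{1q}(k_m(t),w_q(k_m(t)))-f_{1q}(k_n(t),u_q(k_n(t)))$, and (iv) the difference of the inverse matrices acting on $f_{1q}(k_n(t),u_q(k_n(t)))$ --- and bound each block using $|t-k_m(t)|\le T/m$, $|t-k_n(t)|\le T/n$, the boundedness and Lipschitz continuity in time of $(A+RB)^{-1}$ and $RB$, and the boundedness and Lipschitz continuity of $f_{1q}$. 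I expect the only genuinely delicate step to be block (iii): comparing $w_q(k_m(t))$ coming from the $m$-scheme with $u_q(k_n(t))$ coming from the $n$-scheme forces one to insert the intermediate values $w_q(t)$ and $u_q(t)$ (and, for a fully symmetric statement, also to carry an $m$-discretisation increment of $w_q$); every other block is routine once the $n$- and $m$-uniform estimates on the matrices and on $f_{1q}$ are in place, and the final combination, which needs no Gr\"onwall argument since the bound is pointwise, reproduces the stated right-hand side.
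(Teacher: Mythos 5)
Your proposal is essentially correct, and your ``alternative route'' is precisely what the paper does: its entire proof of Lemma~\ref{v1} consists of the sentence ``the proof follows the same lines as in Lemma~\ref{v}'' together with the observation $|k_m(t)-k_n(t)|\le |k_m(t)-t|+|t-k_n(t)|\le T/n+T/m$, i.e.\ one redoes the four-block decomposition of \eqref{equa8b} with $\hat v$ replaced by $\hat v^m$. Your primary route --- inserting the exact $\hat v(t,u_q(t))$ and invoking Lemma~\ref{v} twice as a black box --- is a cleaner packaging of the same estimates and buys you a shorter argument at the cost of one extra application of $(a+b)^2\le 2a^2+2b^2$.

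One point deserves flagging, and you half-noticed it yourself. Whichever route is taken, the block involving $f_{1q}(k_m(t),w_q(k_m(t)))-f_{1q}(k_n(t),u_q(k_n(t)))$ (your block (iii), or the $\|w_q(t)-w_q(k_m(t))\|^2$ term in your first route) inevitably produces either an $m$-mesh increment of $w_q$, or an $m$-mesh increment of $u_q$ together with $\|w_q-u_q\|$ evaluated at $k_m(t)$ rather than at $t$. None of these appears verbatim on the right-hand side of the stated inequality, and your claim that the $w_q$-increment is ``dominated by'' the listed terms is not literally true pointwise. This is a defect of the lemma as stated (the paper's own one-line proof glosses over it too), not of your argument; it is harmless in the only place the lemma is used, since there $w_q=\bar u^m_q$ and the missing increment is controlled by \eqref{equa41a} with $m$ in place of $n$, contributing another $C/m^p$. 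A fully honest statement would either add $\|w_q(t)-w_q(k_m(t))\|^2$ to the right-hand side or take a supremum in time over the $\|w_q-u_q\|^2$ term.
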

    
    \begin{proof}
    The proof follows the same lines as in Lemma \ref{v}.
    Note that  \begin{align*}
        |k_m(t)-k_n(t)|&=|k_m(t)-t+t-k_n(t)|\leq |k_m(t)-t|+|t-k_n(t)|\\
        &\leq \frac{T}{n}+\frac{T}{m}.
              \end{align*}   
    \end{proof}

    \begin{lemma} \label{exis}
     Assume that Assumptions \ref{assum1} and \ref{assum2} hold, then the numerical scheme \eqref{equa16b}  defined on $D_q,~q\geq1$   converges in probability,  uniformly in $[0, T ]$ to  the unique function $U_q$, solution of the SDE \eqref{equa7A}  defined on $D_q,~q\geq1$ as $n\to \infty$. 
     \end{lemma}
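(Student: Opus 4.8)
We establish a uniform-in-time $L^2$ estimate for the error $e^n(t):=\bar u_q^n(t)-U_q(t)$; since $L^2$-convergence implies convergence in probability, this proves the lemma. On $D_q$ the data $f,g$ are replaced by the globally Lipschitz, bounded functions $f_q,g_q$ of Remark~\ref{def4}, and $\hat v$ (hence $\hat v^n$) is globally Lipschitz in the state variable with constant $L_{\hat v}$ independent of $n$ (see the proof of Lemma~\ref{theo1b}); combined with (A2.3) this makes the coefficients of the inherent SDE \eqref{equa7A} globally Lipschitz with linear growth, so $U_q$ exists uniquely, has finite moments of all orders, and satisfies $\mathbb{E}\|U_q(t)-U_q(s)\|^p\le C|t-s|^{p/2}$. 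The plan is to subtract the integral form of the scheme \eqref{equa16b} (written with $f_q,g_q$) from that of \eqref{equa7A}, set $\phi_n(t):=\mathbb{E}\sup_{s\le t}\|e^n(s)\|^2$ (finite by Lemma~\ref{lemef} and the moment bound on $U_q$), and close a Gr\"onwall estimate for $\phi_n$ with a forcing term that tends to $0$.

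Concretely, I would split each coefficient difference into a ``frozen-time'' part and a ``state'' part. For the drift, $(P'(k_n(s))+A^-(k_n(s))B(k_n(s)))-(P'(s)+A^-(s)B(s))$ is $O(1/n)$ by the time-Lipschitz bounds of (A2.3), while $\hat v^n(s,\bar u_q^n(s))-\hat v(s,U_q(s))$ is controlled by Lemma~\ref{v}, whose right-hand side contributes $\|e^n(s)\|^2$ together with $n^{-2}$, $n^{-2}\|\bar u_q^n(s)\|^2$, and $\|\bar u_q^n(s)-\bar u_q^n(k_n(s))\|^2$; the term $\hat f_q(k_n(s),\bar u_q^n(k_n(s)))-A^-(s)f_q(s,U_q(s)+\hat v(s,U_q(s)))$ splits into an $O(1/n)$ time-discretization error, a state error dominated by $\sup_{r\le s}\|e^n(r)\|$, and the increments $\|\bar u_q^n(s)-\bar u_q^n(k_n(s))\|$, $\|U_q(s)-U_q(k_n(s))\|$; the diffusion difference is treated the same way, its martingale integral estimated by Burkholder--Davis--Gundy (or Lemma~\ref{lem3}). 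Applying $\mathbb{E}\sup_{t\le T}$, Cauchy--Schwarz on the drift integral, Lemma~\ref{lemef} for $\sup_{s\le T}\mathbb{E}\|\bar u_q^n(s)\|^2\le C$, Lemma~\ref{theo3a} and \eqref{equa41a} for $\mathbb{E}\|\bar u_q^n(s)-\bar u_q^n(k_n(s))\|^2\le C/n$, and the H\"older regularity of $U_q$, I obtain
$$\phi_n(t)\le C\int_0^t\phi_n(s)\,ds+\eta_n,\qquad \eta_n=O(1/n),$$
whence Gr\"onwall gives $\phi_n(T)\le\eta_n e^{CT}\to 0$, i.e. $\mathbb{E}\sup_{t\le T}\|\bar u_q^n(t)-U_q(t)\|^2\to 0$, which implies the claimed uniform convergence in probability.

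The main obstacle is the $\hat v^n-\hat v$ term in the drift: the numerical implicit function is a one-step linear solve whereas the exact one solves a genuinely nonlinear constraint, so Lemma~\ref{v} is indispensable — but using it forces $n^{-2}\|\bar u_q^n(s)\|^2$ and $\|\bar u_q^n(s)-\bar u_q^n(k_n(s))\|^2$ into the Gr\"onwall right-hand side, and these can be turned into a vanishing forcing term only because the moment bound of Lemma~\ref{lemef} and the time-regularity of Lemma~\ref{theo3a} are \emph{uniform in $n$}. An alternative that avoids invoking the a priori regularity of $U_q$ is to replace Lemma~\ref{v} by Lemma~\ref{v1}, run the same estimate on $\mathbb{E}\sup_{t\le T}\|\bar u_q^n(t)-\bar u_q^m(t)\|^2$ to show that $\{\bar u_q^n\}_n$ is Cauchy in probability uniformly on $[0,T]$, extract a limit by completeness, and identify it with $U_q$ by passing to the limit in \eqref{equa16b}; the estimates are entirely parallel.
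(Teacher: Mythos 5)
Your proposal is correct in substance, but your primary route is not the one the paper takes: you run a direct Gr\"onwall estimate on $\mathbb{E}\sup_{s\le t}\|\bar u_q^n(s)-U_q(s)\|^2$, using Lemma~\ref{v} to control $\hat v^n(s,\bar u_q^n(s))-\hat v(s,U_q(s))$ and presupposing existence, moment bounds and H\"older regularity of $U_q$ from classical Lipschitz SDE theory applied to the truncated coefficients. The paper instead follows the Gy\"ongy-style argument that you relegate to your final sentence: it applies It\^o's formula to $\|\bar u_q^n(t)-\bar u_q^m(t)\|^2$, raises to the power $p$, uses Burkholder, Lemma~\ref{v1}, Lemma~\ref{theo3a} and Lemma~\ref{lemef} to obtain $\mathbb{E}\sup_{s\le t}\bigl(e^{-p\|\zeta\|}\|\bar u_q^n(s)-\bar u_q^m(s)\|^{2p}\bigr)\le C(n^{-p}+m^{-p})$, deduces that $(\bar u_q^n)_n$ is Cauchy, and then identifies the limit with the solution of \eqref{equa7A} by passing to the limit in each integral of \eqref{equa16b}. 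What each route buys: the Cauchy argument constructs the limit rather than assuming it (and is the device that, with $m\to\infty$, yields the rate \eqref{equa18} used in the proof of Theorem~\ref{theo3}); your direct argument is shorter and gives the same $O(n^{-1})$ mean-square rate in one pass, but it does lean on the a priori regularity $\mathbb{E}\|U_q(t)-U_q(s)\|^p\le C|t-s|^{p/2}$, which you should justify explicitly (it follows from the boundedness of $f_q,g_q$ and the global Lipschitz property of $\hat v$, exactly as in Lemma~\ref{theo3a} for the discrete process). Two technical points you gloss over: the paper works with $2p$-th moments weighted by $e^{-p\|\zeta\|}$ precisely because Assumption (A2.2) does not grant $\zeta$ any integrability, so your unweighted $L^2$ estimate implicitly assumes $\mathbb{E}\|\zeta\|^2<\infty$ (or you should carry the same exponential weight); and the finiteness of $\mathbb{E}\sup_{s\le t}\|e^n(s)\|^2$ needs a Burkholder bound on the stochastic integrals, not just the pointwise bound of Lemma~\ref{lemef}. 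Neither point invalidates the argument, but both must be addressed for the proof to match the generality of the hypotheses.
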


     \begin{proof}

	We have assumed that the functions   $\hat{f}(\cdot)$ and $\hat{g}(\cdot)$ are locally Lipschitz in D. Then by remark \ref{def4}, for every $q>0$ we have a bounded measurable function $\hat{f}_q(\cdot, \cdot)$ and $\hat{g}_q(\cdot,\cdot)$ defined on: $\mathbb{	R}_+\times \mathbb{	R}^d\to \mathbb{	R}^d$ and $\mathbb{	R}_+\times \mathbb{	R}^d\to \mathbb{	R}^{d\times d_1}$ respectively, such that $\hat{f}_q(\cdot,\cdot)$ and $\hat{f}(\cdot,\cdot)$
    agree on $\left[0,T \right] \times D_q$  and also $g_q(\cdot,\cdot)$ and $g(\cdot,\cdot)$. In addition, we also have
	\begin{align*}
		\left\|\hat{f}_q(t,u_1)-\hat{f}_q(t,u_2) \right\|& \leq L_q\left\| u_1-u_2\right\|; t\in \left[0,T \right]~, ~u_1,u_2\in \mathbb{R}^d,~~~~~~~~~~~~~~~~~~~~~~~~~~~~~~~~\\
		\left|\hat{g}_q(t,u_1)-\hat{g}_q(t,u_2) \right|_F& \leq L_q\left\| u_1-u_2\right\|; t\in \left[0,T \right]~, ~u_1,u_2\in \mathbb{R}^d.
	\end{align*}
	Let $\bar{u}^n_q(\cdot), ~n\in \mathbb{	N}, ~~q>0$ denotes the Euler's approximation defined by the equation \eqref{equa17b} where the functions $f(\cdot, \cdot)$ and $g(\cdot, \cdot)$ are replaced  by the functions $f_q(\cdot, \cdot)$ and $g_q(\cdot, \cdot)$ respectively.
	We also define the stopping time
	$$\tau_n^q=T\land \inf \left\lbrace t\geq 0,~ u^q_n(t)\notin D_q \right\rbrace .~~~~~~~$$ 
	Note that the approximations $\bar{u}^n(\cdot)$ and $\bar{u}^n_q(\cdot)$ agree on $ \left( 0,\tau_n^q\right],~ n\in \mathbb{	N}, ~~q>0$. 
	From \eqref{equa17b}, we also have 
	\begin{eqnarray}
	\label{suit1}
		\bar{u}^n_q(t) &=&\bar{u}^n_q(0))+\int_{0}^{t} P'(k_n(s))\left[ 	\bar{u}_q^n(s)+\hat{v}^n(s,	\bar{u}_q^n(s))\right]\nonumber \\
        &+&\int_{0}^{t}A^-(k_n(s)) B(k_n(s))
		\left[	\bar{u}_q^n(s)+\hat{v}^n(s,	\bar{u}_q^n(s))\right]ds \nonumber\\
		&+& \int_{0}^{t}\hat{f}_q(k_n(s),\bar{u}^n_q(k_n(s)))ds+\int_{0}^{t}\hat{g}_q(k_n(s),\bar{u}^n_q(k_n(s)))dW(s),
	\end{eqnarray}

	and 
	\begin{eqnarray}\label{suit2}
		\bar{u}_q^m(t) &=&\bar{u}_q^m(0))+\int_{0}^{t} P'(k_m(s))\left[ \bar{u}_q^m(s)+\hat{v}^m(s,\bar{u}_q^m(s))\right]\nonumber\\
        &+&\int_{0}^{t} A^-(k_m(s)) B(k_m(s))\left[\bar{u}_q^m(s)+\hat{v}^m(s,\bar{u}_q^m(s))\right]ds \nonumber\\
		&+&\int_{0}^{t}\hat{f}_q(k_m(s),\bar{u}_q^m(k_m(s)))ds+\int_{0}^{t}\hat{g}_q(k_m(s),\bar{u}_q^m(k_m(s)))dW(s).\nonumber\\
	\end{eqnarray}
	Remember that  $k_n(t)=t_i^n=\frac{i T}{n} ,~ t\in \left[ t_i^n ,t_{i+1}^n\right) $.
	The difference between \eqref{suit1} and \eqref{suit2} yields
	\begin{eqnarray}
			\bar{u}^n_q(t)-	\bar{u}_q^m(t) 
			&=& \int_{0}^{t} P'(k_n(s))\left[ 	\bar{u}_q^n(s)+\hat{v}^n(s,	\bar{u}_q^n(s))\right] ds\nonumber\\
            &-& \int_{0}^{t}P'(k_m(s))\left[ \bar{u}_q^m(s)+\hat{v}^m(s,	\bar{u}_q^m(s))\right]ds\nonumber\\
		& +& \int_{0}^{t}A^-(k_n(s)) B(k_n(s))\left[	\bar{u}_q^n(s)+\hat{v}^n(s,	\bar{u}_q^n(s))\right]\nonumber\\
        &-&A^-(k_m(s)) B(k_m(s))\left[\bar{u}_q^m(s)+\hat{v}^m(s,\bar{u}_q^m(s))\right]ds\nonumber \\
		&+&\int_{0}^{t}\hat{f}_q(k_n(s),\bar{u}^n_q(k_n(s)))-\hat{f}_q(k_m(s),\bar{u}^m_q(k_m(s)))ds\nonumber\\ 
		&+&\int_{0}^{t}\hat{g}_q(k_n(s),\bar{u}^n_q(k_n(s)))-\hat{g}_q(k_m(s),\bar{u}^m_q(k_m(s)))dW(s).\nonumber\\
	\end{eqnarray}
	We apply the Ito's  lemma to $\left\| 	\bar{u}^n_{q}(t)-\bar{u}^m_{q}(t)\right\|^2,~ t\in \left[0,T \right] $ and  obtain
	\begin{align}\label{equa9}
			&\left\| 	\bar{u}^n_{q}(t)-\bar{u}^m_{q}(t)\right\|^2 
		=2	\int_{0}^{t}\left\langle 	\bar{u}^n_q(s)-	\bar{u}_q^m(s), P'(k_n(s))\left[ 	\bar{u}_q^n(s)+\hat{v}^n(s,	\bar{u}_q^n(s))\right] \right\rangle ds\nonumber\\
		-&2	\int_{0}^{t}\left\langle 	\bar{u}^n_q(s)-	\bar{u}_q^m(s), P'(k_m(s))\left[ 	\bar{u}_q^m(s)+\hat{v}^m(s,	\bar{u}_q^m(s))\right]\right\rangle ds\nonumber\\
		+&2 \int_{0}^{t}\left\langle 	\bar{u}^n_q(s)-	\bar{u}_q^m(s),A^-(k_n(s)) B(k_n(s))\left[	\bar{u}_q^n(s)+\hat{v}^n(s,	\bar{u}_q^n(s))\right] \right \rangle ds\nonumber\\
		-&2 \int_{0}^{t}\left\langle 	\bar{u}^n_q(s)-	\bar{u}_q^m(s), A^-(k_m(s)) B(k_m(s))\left[\bar{u}_q^m(s)+\hat{v}^m(s,	\bar{u}_q^m(s))\right]\right\rangle ds\nonumber\\
		+&2\int_{0}^{t}\left\langle 	\bar{u}^n_q(s)-	\bar{u}_q^m(s),\hat{f}_q(k_n(s),\bar{u}^n_q(k_n(s)))-\hat{f}_q(k_m(s),\bar{u}^m_q(k_m(s))) \right\rangle ds\nonumber\\
		+&\int_{0}^{t} \left|\hat{g}_q(k_n(s),\bar{u}^n_q(k_n(s)))
		-\hat{g}_q(k_m(s),\bar{u}^m_q(k_m(s)))\right|_F^2d(s), \nonumber\\
		+&2\int_{0}^{t} \left\langle 	\bar{u}^n_q(s)-	\bar{u}_q^m(s), \hat{g}_q(k_n(s),\bar{u}^n_q(k_n(s))) - \hat{g}_q(k_m(s),\bar{u}^m_q(k_m(s)))\right\rangle dW(s).
	\end{align}
	To ease the presentation, we define $a(s)$, $b(s)$, $c(s)$, $d(s)$ and $e(s)$ as
	\begin{eqnarray*}
	a(s)&=&2\left\langle \bar{u}^n_q(s)-	\bar{u}_q^m(s), P'(k_n(s))\left[ 	\bar{u}_q^n(s)+\hat{v}^n(s,	\bar{u}_q^n(s))\right] \right\rangle\\
	 &-& 2\left\langle \bar{u}^n_q(s)-	\bar{u}_q^m(s), P'(k_m(s))\left[ 	\bar{u}_q^m(s)+\hat{v}^m(s,	\bar{u}_q^m(s))\right]\right\rangle,~~~~~~~~~~~~~~~~~~~~~~~~
	\end{eqnarray*}
	\begin{eqnarray*}
			b(s)&=&2	\left\langle 	\bar{u}^n_q(s)-	\bar{u}_q^m(s),A^-(k_n(s)) B(k_n(s))\left[	\bar{u}_q^n(s)+\hat{v}^n(s,	\bar{u}_q^n(s))\right] \right\rangle \\
		&-& 2\left\langle 	\bar{u}^n_q(s)-	\bar{u}_q^m(s), A^-(k_m(s)) B(k_m(s))\left[	\bar{u}_q^m(s)+\hat{v}^m(s,	\bar{u}_q^m(s))\right]\right\rangle,~~~~~~~~~~~~
	\end{eqnarray*}
	\begin{eqnarray*}
		c(s)&=&2\left\langle 	\bar{u}^n_q(s)-	\bar{u}_q^m(s),\hat{f}_q(k_n(s),\bar{u}^n_q(k_n(s)))-\hat{f}_q(k_m(s),\bar{u}^m_q(k_m(s)))\right\rangle,
	\end{eqnarray*}
	\begin{eqnarray*}
		d(s)&=&2\left\langle 	\bar{u}^n_q(s)-	\bar{u}_q^m(s), \hat{g}_q(k_n(s),\bar{u}^n_q(k_n(s)))-\hat{g}_q(k_m(s),\bar{u}^m_q(k_m(s)))\right\rangle,      
	\end{eqnarray*}
	\begin{eqnarray*}
		e(s)=\left|\hat{g}_q(k_n(s),\bar{u}^n_q(k_n(s)))-\hat{g}_q(k_m(s),\bar{u}^m_q(k_m(s)))\right|_F^2.
	\end{eqnarray*}
	Then for all $p\geq 2$ \eqref{equa9} becomes
	\begin{align*}
		\left\| 	\bar{u}^n_{q}(t)-\bar{u}^m_{q}(t)\right\|^{2p}&\leq 5^{p-1} \left( \left| \int_{0}^{t}a(s) ds\right|^p+\left| \int_{0}^{t}b(s)d(s)\right|^p + \left| \int_{0}^{t}c(s) d(s)\right|^p\right.\\
		&\left.+\left| \int_{0}^{t}d(s)dW(s)\right|^p+\left| \int_{0}^{t}e(s)ds\right|^p\right) \\
		& \leq 5^{p-1} \left( \left\lbrace \left| \int_{0}^{t}a(s) ds\right|^2\right\rbrace ^{\frac{p}{2}}+\left\lbrace \left| \int_{0}^{t}b(s)d(s)\right|^2\right\rbrace^ {\frac{p}{2}} \right.\\
		&	\left.+\left\lbrace \left| \int_{0}^{t}c(s)d(s)\right|^2\right\rbrace^ {\frac{p}{2}}+ \left\lbrace \left| \int_{0}^{t}e(s)d(s)\right|^2\right\rbrace^ {\frac{p}{2}}\right.\\
        &\left.+ \left| \int_{0}^{t}d(s) dW(s)\right|^p\right),t\in \left[0,T \right] .
	\end{align*}
	Using Cauchy Schwartz inequality, we obtain
	\begin{align*}
		\left\|	\bar{u}^n_{q}(t)-\bar{u}^m_{q}(t)\right\| ^{2p}& \leq 5^{p-1} \left(T^{\frac{p}{2}} \left\lbrace \int_{0}^{t}\left| a(s)\right|^2 ds\right\rbrace ^{\frac{p}{2}}+T^{\frac{p}{2}}\left\lbrace  \int_{0}^{t}\left|b(s)\right|^2d(s)\right\rbrace^ {\frac{p}{2}} \right.\nonumber\\
		&\left.+T^{\frac{p}{2}} \left\lbrace \int_{0}^{t}\left| c(s)\right|^2 ds\right\rbrace ^{\frac{p}{2}}+	T^{\frac{p}{2}} \left\lbrace \int_{0}^{t}\left| e(s)\right|^2 ds\right\rbrace ^{\frac{p}{2}}\right.\\
        &\left.+ \left| \int_{0}^{t}d(s) dW(s)\right|^p\right),~t\in \left[0,T \right] .
	\end{align*}
	Multiplying by the $\exp(-p\left\|\zeta \right\| )$ \footnote{Remember that $\zeta$ is the initial solution of \eqref{equa1a}} yields
	\begin{align*}
		\sup_{s\leq t}\left( \exp(-p\left\|\zeta \right\| )	\left\|	\bar{u}^n_{q}(s)-\bar{u}^m_{q}(s)\right\| ^{2p}\right) & \leq 5^{p-1}T^{\frac{p}{2}} \left\lbrace \int_{0}^{t} \exp(-2\left\|\zeta \right\| )\left| a(s)\right|^2 ds\right\rbrace ^{\frac{p}{2}}\nonumber\\
		&+5^{p-1}T^{\frac{p}{2}}\left\lbrace  \int_{0}^{t}\exp(-2\left\|\zeta \right\| )\left|b(s)\right|^2d(s)\right\rbrace^ {\frac{p}{2}}\nonumber\\
		& + 5^{p-1}T^{\frac{p}{2}} \left\lbrace \int_{0}^{t} \exp(-2\left\|\zeta \right\| )\left| c(s)\right|^2 ds\right\rbrace ^{\frac{p}{2}}\nonumber\\
		& + 5^{p-1}T^{\frac{p}{2}} \left\lbrace \int_{0}^{t} \exp(-2\left\|\zeta \right\| )\left| e(s)\right|^2 ds\right\rbrace ^{\frac{p}{2}}\nonumber\\
		& + 5^{p-1}\exp(-p\left\|\zeta \right\| )\left| \int_{0}^{t}d(s) dW(s)\right|^p.
	\end{align*}
	Taking the expectation and using the Burkholder inequality yields the following.
	\begin{align}\label{equa10}
		\mathbb{	E}	\sup_{s\leq t}\left( \exp(-p\left\|\zeta \right\| )	\left\|	\bar{u}^n_{q}(s)-\bar{u}^m_{q}(s)\right\| ^{2p}\right) & \leq 5^{p-1}T^{\frac{p}{2}} \mathbb{	E}\left\lbrace \int_{0}^{t} \exp(-2\left\|\zeta \right\| )\left| a(s)\right|^2 ds\right\rbrace ^{\frac{p}{2}}\nonumber\\
		&+5^{p-1}T^{\frac{p}{2}}\mathbb{	E}\left\lbrace  \int_{0}^{t}\exp(-2\left\|\zeta \right\| )\left|b(s)\right|^2d(s)\right\rbrace^ {\frac{p}{2}}\nonumber\\
		&+5^{p-1}T^{\frac{p}{2}}\mathbb{	E}\left\lbrace  \int_{0}^{t}\exp(-2\left\|\zeta \right\| )\left|c(s)\right|^2d(s)\right\rbrace^ {\frac{p}{2}}\nonumber\\
		&+5^{p-1}T^{\frac{p}{2}}\mathbb{	E}\left\lbrace  \int_{0}^{t}\exp(-2\left\|\zeta \right\| )\left|e(s)\right|^2d(s)\right\rbrace^ {\frac{p}{2}}\nonumber\\
		& +5^{p-1}C_p \mathbb{	E}\left\lbrace \int_{0}^{t}\exp(-2\left\|\zeta \right\| )\left| d(s)\right|^2 ds\right\rbrace ^{\frac{p}{2}},
	\end{align}	
    where $C_p=\left(\frac{p^{p+1}}{2(p-1)^{p-1}} \right)^{\frac{p}{2}} $.
    
	Let us evaluate $\left| a(s)\right|^2, \left| b(s)\right|^2 \text{and }~\left| c(s)\right|^2, s\in \left[0,T \right].$
	We have
	\begin{align*}
		\left| a(s)\right|^2&=\left\langle \bar{u}^n_q(s)-	\bar{u}_q^m(s), P'(k_n(s))\left[ 	\bar{u}_q^n(s)+\hat{v}^n(s,	\bar{u}_q^n(s))\right] \right\rangle\nonumber\\
        &-\left\langle \bar{u}^m_q(s)-	\bar{u}_q^m(s), P'(k_m(s))\left[ 	\bar{u}_q^m(s)+\hat{v}^m(s,	\bar{u}_q^m(s))\right] \right\rangle\nonumber\\
		&\leq\left( \left\| \bar{u}^n_q(s)-	\bar{u}_q^m(s)\right\| ^2\right.\nonumber\\
        &\left.+\left\|  P'(k_n(s))\left[ 	\bar{u}_q^n(s)+\hat{v}^n(s,	\bar{u}_q^n(s))\right]
	       - P'(k_m(s))\left[ 	\bar{u}_q^m(s)+\hat{v}^m(s,	\bar{u}_q^m(s))\right]\right\| ^2\right) ^2\\ 
	     &\leq\left( \left\| \bar{u}^n_q(s)-	\bar{u}_q^m(s)\right\| ^2\right.\nonumber\\
         &\left.+2\left|  P'(k_n(s)\right|^2 _F\left\|  	\bar{u}_q^n(s)+\hat{v}^n(s,	\bar{u}_q^n(s))-\bar{u}_q^m(s)-\hat{v}^m(s,	\bar{u}_q^m(s))\right\| ^2\right.\nonumber\\
		&\left. +2\left\|\bar{u}_q^m(s)+\hat{v}^m(s,	\bar{u}_q^m(s)) \right\|^2 \left[ \left|  P'(k_n(s))-  P'(k_m(s))\right| _F^2	\right]\right) ^{\frac{2p}{p}}.
        \end{align*}
We use the fact that the function $f_q(\cdot)$ and the matrix $P'(\cdot)$ are bounded and  $f_q(\cdot)$ satisfies the Lipschtiz condition.
        \begin{align}\label{equa11aa}
		\left| a(s)\right|^2 
		&\leq\left( \left\| \bar{u}^n_q(s)-	\bar{u}_q^m(s)\right\| ^2\right.\nonumber\\
        &\left.+4\left|  P'\right| _F^2\left( \left\|  	\bar{u}_q^n(s)-\bar{u}_q^m(s)\right\| ^2+\left\| \hat{v}^n(s,	\bar{u}_q^n(s))-\hat{v}^m(s,	\bar{u}_q^m(s))\right\| ^2\right) \right.\nonumber\\
		&\left. +2\left\|\bar{u}_q^m(s)+\hat{v}^m(s,	0)+\hat{v}^m(s,	\bar{u}_q^m(s))-\hat{v}^m(s,	0) \right\|^2 \left[ \left|  P'(k_n(s))-  P'(k_m(s))\right| _F^2	\right]\right) ^{\frac{2p}{p}}\nonumber\\
		&\leq\left( \left\| \bar{u}^n_q(s)-	\bar{u}_q^m(s)\right\| ^2+4K_1(\left\|  	\bar{u}_q^n(s)-\bar{u}_q^m(s)\right\| ^2 +\left\| \hat{v}^n(s,	\bar{u}_q^n(s))-\hat{v}^m(s,	\bar{u}_q^m(s))\right\| ^2)\right.\nonumber\\
		&\left. +4K_1\left( \left\|\bar{u}_q^m(s)+\hat{v}^m(s,	0)\right\| ^2+\left\| \hat{v}^m(s,	\bar{u}_q^m(s))-\hat{v}^m(s,	0) \right\|^2 \right) \left\| k_n(s)-  k_m(s)\right\|^2\right) ^{\frac{2p}{p}}\nonumber\\
		&\leq3^{p-1}\left(\left[ 1+4K_1\right]^{p} \left\|  	\bar{u}_q^n(s)-\bar{u}_q^m(s)\right\| ^{2p}+4K_1\left\| \hat{v}^n(s,	\bar{u}_q^n(s))-\hat{v}^m(s,	\bar{u}_q^m(s))\right\| ^{2p}\right.\nonumber\\ 
		& \left.+\left( 4K_1\left( (2+L_{\hat{v}})\left\|\bar{u}_q^m(s)\right\|^2 +2\left\| \hat{v}^m(s,	0)\right\| ^2 \right)\right) ^{p} \left\| k_n(s)-  k_m(s)\right\|^{2p}\right) ^{\frac{2}{p}}.		
	\end{align}
	We  also have  $ \left\| k_n(s)-  k_m(s)\right\|\leq \left\| k_n(s)-s\right\|+\left\| s-k_m(s)\right\|\leq \frac{T}{n}+\frac{T}{m}$ and \eqref{equa11aa} becomes
	\begin{align}\label{equa11a}
		\left| a(s)\right|^2&\leq3^{p-1}\left(\left[ 1+4K_1\right]^{p} \left\|  	\bar{u}_q^n(s)-\bar{u}_q^m(s)\right\| ^{2p}+4K_1\left\| \hat{v}^n(s,	\bar{u}_q^n(s))-\hat{v}^m(s,	\bar{u}_q^m(s))\right\| ^{2p}\right.\nonumber\\
		& \left.+3^{2p-1}\left( 4K_1\left( (2+L_{\hat{v}})\left\|\bar{u}_q^m(s)\right\|^2 +2\left\| \hat{v}^m(s,	0)\right\| ^2 \right)\right) ^{2p}\left[  \left(  \frac{T}{n}\right) ^{2p}+ \left( \frac{T}{m}\right) ^{2p}\right] \right) ^{\frac{2}{p}}	.
	\end{align}
  
	Additionally, we use the same method as in the estimation of $a(\cdot)$, and we have for $s\in \left[0,T \right]$
	\begin{align}\label{equa12a}
		\left| b(s)\right|^2&\leq3^{p-1}\left(\left[ 1+4K_1)\right]^{p} \left\|  	\bar{u}_q^n(s)-\bar{u}_q^m(s)\right\| ^{2p}+4K_1\left\| \hat{v}^n(s,	\bar{u}_q^n(s))-\hat{v}^m(s,	\bar{u}_q^m(s))\right\| ^{2p}\right.\nonumber\\
		& \left.+3^{2p-1}\left( 4K_1\left( (2+L_{\hat{v}})\left\|\bar{u}_q^m(s)\right\|^2 +2\left\| \hat{v}^m(s,	0)\right\| ^2 \right)\right) ^{2p}\left[  \left(  \frac{T}{n}\right) ^{2p}+ \left( \frac{T}{m}\right) ^{2p}\right] \right) ^{\frac{2}{p}}	.
	\end{align}
   
	In this case, we use the  fact that
	\begin{eqnarray*}
	A(k_n(s))B(k_n(s))-A(k_m(s))B(k_m(s)&=&A(k_n(s))(B(k_n(s))-B(k_m(s)))\\
	&+&(A(k_n(s))-A(k_m(s)))B(k_m(s)).
	\end{eqnarray*}
	
	Moreover we can also estimated $c(\cdot)$ as  follows
	\begin{align}\label{equa13a}
		\left| c(s)\right|^2&=\left( 2\left\langle 	\bar{u}^n_q(s)-	\bar{u}_q^n(s),\hat{f}_q(k_n(s),\bar{u}^n_q(k_n(s)))-\hat{f}_q(k_m(s),\bar{u}^m_q(k_m(s)))\right\rangle \right) ^2\nonumber\\
		&\leq\left( \left\| \bar{u}^n_q(s)-	\bar{u}_q^m(s)\right\| ^2+\left\| \hat{f}_q(\bar{u}^n_q(k_n(s)))-\hat{f}_q(\bar{u}^m_q(k_m(s)))\right\|^2  \right)^{p\frac{2}{p}}\nonumber\\
		&\leq\left( \left\| \bar{u}^n_q(s)-	\bar{u}_q^m(s)\right\| ^2+K_1L_q(1+L_{\hat{v}})\left\| \bar{u}^n_q(k_n(s))-	\bar{u}^m_q(k_m(s))\right\| ^2  \right)^{p\frac{2}{p}},	~s\in \left[0,T \right].	
	\end{align}
	But we have
	\begin{align*}
		\left\| \bar{u}^n_q(k_n(s))-\bar{u}^m_q(k_m(s))\right\|&=\left\| \bar{u}^n_q(k_n(s))-\bar{u}^n_q(s)+\bar{u}^n_q(s)-\bar{u}^m_q(s)+\bar{u}^m_q(s)-\bar{u}^m_q(k_m(s))\right\|\\
		&\leq\left\| \bar{u}^n_q(k_n(s))-\bar{u}^n_q(s)\right\| +\left\| \bar{u}^m_q(s)-\bar{u}^m_q(k_m(s))\right\|\\
        &+\left\| \bar{u}^n_q(s)-\bar{u}^m_q(s)\right\|.
	\end{align*}
	Then \eqref{equa13a} becomes
	\begin{align}\label{equa11}
		\left| c(s)\right|^2	&\leq\left( C(K_1,L_q,L_{\hat{v}})\left[\left\| \bar{u}^n_q(s)-	\bar{u}_q^m(s)\right\| ^2\right.\right.\nonumber\\
        & \left.\left.+ \left\| \bar{u}^n_q(k_n(s))-\bar{u}^n_q(s)\right\|^2 +\left\| \bar{u}^m_q(s)-\bar{u}^m_q(k_m(s))\right\|^2\right]\right)^{p\frac{2}{p}}\nonumber\\
		&\leq\left( C(K_1,L_q,L_{\hat{v}},p)\left[\left\| \bar{u}^n_q(s)-	\bar{u}_q^m(s)\right\| ^{2p}\right.\right.\nonumber\\
        &\left.\left.+\left\| \bar{u}^n_q(k_n(s))-\bar{u}^n_q(s)\right\|^{2p} +\left\| \bar{u}^m_q(s)-\bar{u}^m_q(k_m(s))\right\|^{2p}\right]  \right)^{\frac{2}{p}}\nonumber\\
	\end{align}
	Additionally, we use the same method as in the estimation of $c(\cdot)$ to have the estimation of $d(\cdot)$.  For $s\in \left[0,T \right]$, we have
	\begin{align}\label{equa11b}
		\left| d(s)\right|^2 &\leq\left( C(K_1,L_q,L_{\hat{v}},p) \left[\left\| \bar{u}^n_q(s)-	\bar{u}_q^m(s)\right\| ^{2p}\right.\right.\nonumber\\
        &\left.\left.+\left\| \bar{u}^n_q(k_n(s))-\bar{u}^n_q(s)\right\|^{2p} +\left\| \bar{u}^m_q(s)-\bar{u}^m_q(k_m(s))\right\|^{2p}\right]  \right)^{\frac{2}{p}}.
	\end{align}
	Finally, let us estimate $e(\cdot)$.
	\begin{align}\label{equa12}
		&\left| e(s)\right| ^2
		=\left(\left|\hat{g}_q(k_n(s),\bar{u}^n_q(k_n(s)))-\hat{g}_q(k_m(s),\bar{u}^m_q(k_m(s))\right|_F^2\right) ^2\nonumber\\
                &\leq C(K_1,L_q,L_{\hat{v}},p)\left(\left\| \bar{u}^n_q(k_n(s))-\bar{u}^m_q(k_m(s))\right\| ^{2p}\right) ^{\frac{2}{p}}\nonumber\\
		&\leq C(K_1,L_q,L_{\hat{v}},p) \left[ \left\| \bar{u}^n_q(s)-	\bar{u}_q^m(s)\right\| ^{2p}\right.\nonumber\\
        &\left.+\left\| \bar{u}^n_q(k_n(s))-\bar{u}^n_q(s)\right\|^{2p} +\left\| \bar{u}^m_q(s)-\bar{u}^m_q(k_m(s))\right\|^{2p} \right]^{\frac{2}{p}}.				
	\end{align}
	Replacing \eqref{equa11a}, \eqref{equa12a} , \eqref{equa11}, \eqref{equa11b} and    \eqref{equa12} in  \eqref{equa10} and using Cauchy inequality  yields
	\begin{align}\label{equa14a2}
		\mathbb{	E}	\sup_{s\leq t}&\left( \exp(-p\left\|\zeta \right\| )	\left\| \bar{u}_q^{n}(s)-	\bar{u}_q^{m}(s)\right\| ^{2p}\right)  \leq C  \mathbb{	E}\int_{0}^{t} \exp(-p\left\|\zeta \right\| )\left\|  \bar{u}_q^{n}(s)-	\bar{u}_q^{m}(s)\right\| ^{2p} ds \nonumber\\
		&+\left( \mathbb{	E}\int_{0}^{t}  \exp(-p\left\|\zeta \right\| )\left\| \bar{u}_q^{m}(s)\right\| ^{2p}ds\right)\left( \frac{C}{m^{2p}} +\frac{C}{n^{2p}}\right) \nonumber\\
        & +C  \mathbb{	E}\int_{0}^{t} \exp(-p\left\|\zeta \right\| )\left\| \bar{u}^m_q(s)-\bar{u}^n_q(k_m(s))\right\|^{2p} ds \nonumber\\
          &+ C  \mathbb{	E}\int_{0}^{t} \exp(-p\left\|\zeta \right\| )\left\| \bar{u}^n_q(s)-\bar{u}^n_q(k_n(s))\right\|^{2p} ds \nonumber\\
		&+\frac{C}{m^{2p}} +\frac{C}{n^{2p}} +C  \mathbb{	E}\int_{0}^{t} \exp(-p\left\|\zeta \right\| )\left\|\hat{v}^n(s,\bar{u}_q^n(s)-\hat{v}^m(s,\bar{u}^m_q(s))\right\|^{2p} ds.
        \end{align}

By using  \eqref{equa41a}  in the proof of Theorem \ref{theo3a}, for $s\in \left[0,T \right]$, we have 
	\begin{align}\label{WES}
		\mathbb{	E}\left\| \bar{u}^m_q(s)-\bar{u}^m_q(k_m(s))\right\|^{2p}\leq  Z \left| k_m(s)-s\right|^p\leq \frac{C}{m}.	
        \end{align}
  From Lemma \ref{v1} we have 
     \begin{align*}
	\mathbb{E}\sup_{t\leq T}\exp{(-p\|\zeta\|) }&\left\|\hat{v}^n(t, \bar{u}_q^{n}(t))-\hat{v}^m(t,  \bar{u}_q^{m}(t))\right\|^{2p} \leq  \frac{C}{n^{2p}}+\frac{C}{m^{2p}}\\
    &+C\mathbb{E}\left(\sup_{t\leq T}\exp{(-p\|\zeta\|) }\|\bar{u}_q^n(t)-\bar{u}_q^n(k_n(t))\|^{2p}\right)\\
        &+C\mathbb{E}\left(\sup_{t\leq T}\exp{(-p\|\zeta\|) }\|\bar{u}_q^m(t)-\bar{u}_q^n(t)\|^{2p}\right)\\
        &+C\frac{\mathbb{E}\left(\sup_{t\leq T}\exp{(-p\|\zeta\|) }\|\bar{u}_q^n(t)\|^2\right)}{n^{2p}}.
	\end{align*}
    From Lemma \ref{theo3a} and Lemma \ref{lemef} we have
     \begin{align}\label{suit15}
	\mathbb{E}\sup_{t\leq T}\exp{(-p\|\zeta\|) }&\left\|\hat{v}^n(t, \bar{u}_q^{n}(t))-\hat{v}^m(t, \bar{u}_q^m(t))\right\|^{2p} \leq  \frac{C}{n^{p}}+\frac{C}{m^{p}}\nonumber\\
        &+C\mathbb{E}\left(\sup_{t\leq T}\exp{(-p\|\zeta\|) }\|\bar{u}_q^m(t)-\bar{u}_q^n(t)\|^{2p}\right).
	\end{align}
        We insert \eqref{WES} and \eqref{suit15} in  \eqref{equa14a2}  then we have
        \begin{align}\label{equa14}
		\mathbb{	E}	&\sup_{s\leq t}\left( \exp(-p\left\|\zeta \right\| )	\left\| \bar{u}_q^{n}(s)-	\bar{u}_q^{m}(s)\right\| ^{2p}\right) 
		 \leq C\int_{0}^{t}  \mathbb{	E}\sup_{t\leq T} \exp(-p\left\|\zeta \right\| )\left\|    \bar{u}_q^{n}(s)-	\bar{u}_q^{m}(s)\right\| ^{2p} ds \nonumber\\
		&+\left( C\int_{0}^{t}  \exp(-p\left\|\zeta \right\| )\mathbb{	E}\left\|  \bar{u}_q^{m}(s)\right\|^{2p}ds\right) \left( \frac{C}{m^{p}} +\frac{C}{n^{p}}\right)+\frac{C}{m^{p}} +\frac{C}{n^{p}},
	\end{align}
	where $C=C(p,q, T, Z, K_1,L_{\hat{v}})$.\\
 Using Lemma \ref{lemef} we have
	\begin{align}\label{equa41b}
		\mathbb{	E}	\sup_{s\leq t}\left( \exp(-p\left\|\zeta \right\| )	\left\| \bar{u}_q^{n}(s)-	\bar{u}_q^{m}(s)\right\| ^{2p}\right) 
		& \leq C\int_{0}^{t}  \mathbb{	E}\sup_{s\leq t} \exp(-p\left\|\zeta \right\| )\left\|    \bar{u}_q^{n}(s)-	\bar{u}_q^{m}(s)\right\| ^{2p} ds \nonumber\\
		&+\frac{C}{m^{p}} +\frac{C}{n^{p}},~t\in \left[0,T \right].
	\end{align}
	Using Gronwall's lemma yields
	\begin{align}\label{equa17w}
		\mathbb{	E}	\sup_{s\leq t}\left( \exp(-p\left\|\zeta \right\| )	\left\| \bar{u}_q^{n}(s)-	\bar{u}_q^{m}(s)\right\| ^{2p}\right) 
		&\leq C\left( \frac{1}{n^p}+\frac{1}{m^p}\right),t\in \left[0,T \right] . ~~~~~~~~~~~~~~~~~~~~~~~~~~~~~~~~~~
	\end{align}
	Then the $(\bar{u}^n_q(t))_n\geq 1$, which is a Cauchy sequence converges  in probability to the continuous stochastic process that we denote $U_q(t)$, uniformly in $t\in \left[0,T \right] $.
Furthermore, we can prove  the following uniform convergence in probability for  $t\in\left[0,T \right] $ and  $n\to \infty$
	\begin{eqnarray}
    \label{neqf}
    \int_{0}^{t} P'(k_n(s))\left[ 	\bar{u}_q^n(s)+\hat{v}^n(s,	\bar{u}_q^n(s))\right]ds\to \int_{0}^{t} P'(s)\left[ 	U_q(s)+\hat{v}(s,U_q(s))\right] ds.
    \end{eqnarray}
    $$\int_{0}^{t} A^-(k_n(s))B(k_n(s))\left[ 	\bar{u}_q^n(s)+\hat{v}^n(s,	\bar{u}_q^n(s))\right]ds\to \int_{0}^{t} A^-B(s)\left[ 	U_q(s)+\hat{v}(s,U_q(s))\right]ds,$$ Indeed, \eqref{neqf} uses the fact that
	\begin{eqnarray*}
		&\mathbb{	E}\left\| \int_{0}^{t} P'(k_n(s))\left[ 	\bar{u}_q^n(s)+\hat{v}^n(s,	\bar{u}_q^n(s))\right]-P'\left[ 	U_q(s)+\hat{v}(s,	U_q(s))\right]ds\right\| ^2 \\
        & \leq C\mathbb{E}\sup_{s\leq t}\left\| \bar{u}^n_q(s)-	U_q(s)\right\| ^2+\frac{C}{n}.
	\end{eqnarray*}
	Moreover, we can prove  the following uniform convergence in probability for  $t\in\left[0,T \right] $ and  $n\to \infty$
	\begin{eqnarray}
	\label{eqq}
	&\int_{0}^{t}\hat{f}_q(k_n(s),\bar{u}^n_q(k_n(s)))ds\to\int_{0}^{t}\hat{f}_q(s,U_q(s)))ds
	\end{eqnarray}
    \begin{eqnarray*}
	\int_{0}^{t}\hat{g}_q(k_n(s),\bar{u}^n_q(k_n(s)))dW(s)\to \int_{0}^{t}\hat{g}_q(s,U_q(s))dW(s) 
    \end{eqnarray*}
	  Indeed \eqref{eqq} uses the fact that
	\begin{eqnarray*}
		&&\mathbb{E}\left\| \int_{0}^{t}\hat{f}_q(k_n(s),\bar{u}^n_q(k_n(s)))  -\hat{f}_q(s,U_q(s)))ds\right\| ^2\\
        &\leq&
		 C\,\mathbb{E}\int_{0}^{t}\left\|\hat{f}_q(k_n(s),\bar{u}^n_q(k_n(s)))-\hat{f}_q(s,U_q(s))\right\| ^2ds\\
        &\leq&
		 C\,\mathbb{E}\sup_{s\leq t}\left\| \bar{u}^n_q(s)-	U_q(s)\right\| ^2.
	\end{eqnarray*} 
    Therefore, $U_q(t)$ is the unique solution of the SDE \eqref{equa7A}. This ends the proof.
     \end{proof}

At this point, we have enough key results for the proof of Theorem \ref{theo3}.
\subsection{Proof of  Theorem \ref{theo3}}
	From that fact that   $$\bar{X}^n(t)=\bar{u}^{n}(t) +\hat{v}^n(t,\bar{u}^{n}(t)) \text{ and } Y(t)=U(t)+\hat{v}(t,U(t)), $$ 
    we have
   \begin{align*}
		\left\| 	\bar{X}^{n}(t)-	Y(t)\right\|^{2p}&=\left\| 	\bar{u}^{n}(t)+\hat{v}^n(t, \bar{u}^{n}(t))-		U(t)-\hat{v}(t, U(t))\right\|^{2p},\nonumber~~~~~~~~~~~~~~~~\\
		&\leq 2^{2p-1}\left\| 	\bar{u}^{n}(t)-	U(t)\right\|^{2p}+2^{2p-1}\left\|\hat{v}^n(t, \bar{u}^{n}(t))-\hat{v}(t, U(t))\right\|^{2p} ;
	\end{align*}
    Let $\bar{X}_q^{n}$ and 	$Y_q$ the restriction  of the solution $\bar{X}^{n}$ and $Y$ respectively on $D_q$ with $D=\cup_{q=1}^{\infty} D_q$,  then we have 
   \begin{align*}
		\left\| 	\bar{X}^{n}_q(t)-	Y_q(t)\right\|^{2p}&=\left\| 	\bar{u}_q^{n}(t)+\hat{v}^n(t, \bar{u}_q^{n}(t))-		U_q(t)-\hat{v}(t, U_q(t))\right\|^{2p},\nonumber~~~~~~~~~~~~~~~~\\
		&\leq 2^{2p-1}\left\| 	\bar{u}_q^{n}(t)-	U_q(t)\right\|^{2p}+2^{2p-1}\left\|\hat{v}^n(t, \bar{u}_q^{n}(t))-\hat{v}(t, U_q(t))\right\|^{2p}.
	\end{align*}
    From Lemma \ref{v} we have
     \begin{align*}
		\left\| 	\bar{X}^{n}_q(t)-	Y_q(t)\right\|^{2p}
        &\leq C\left(\frac{1}{n^{2p}}+\frac{\|\bar{u}^n_q(t)\|^{2p}}{n^{2p}} +\|\bar{u}^n_q(t)-\bar{u}^n_q(k_n(t))\|^{2p}\right)\\
        &+C\|U_q(t)-\bar{u}^n_q(t)\|^{2p}.
	\end{align*}
  Multiplying by $\exp{(-p\|\zeta\|) }$ and taking the expectation yields
   \begin{align*}
		\mathbb{	E}	\sup_{s\leq t}\left( \exp(-p\left\|\zeta \right\| )	\left\| 	\bar{X}_q^{n}(t)-	Y_q(t)\right\|^{2p}\right)
		&\leq \frac{C}{n^{2p}}+\mathbb{	E}	\sup_{s\leq t}\left( \exp(-p\left\|\zeta \right\| )	\frac{\|\bar{u}_q^{n}(t)\|^{2p}}{n^{2p}}\right)\\
        &+C\mathbb{	E}	\sup_{s\leq t}\left( \exp(-p\left\|\zeta \right\| )	\|\bar{u}_q^{n}(t)-\bar{u}_q^{n}(k_n(t))\|^{2p}\right)\\
        &+C\mathbb{	E}	\sup_{s\leq t}\left( \exp(-p\left\|\zeta \right\| )	\|U_q(t)-\bar{u}_q^{n}(t)\|^{2p}\right).
	\end{align*}
From Lemma \ref{lemef} and from \eqref{equa41a} in the Proof of Lemma \ref{theo3a} we have
 \begin{align}\label{bonqw}
		\mathbb{	E}	\sup_{s\leq t}\left( \exp(-p\left\|\zeta \right\| )	\left\| 	\bar{X}_q^{n}(t)-	Y_q(t)\right\|^{2p}\right)
		&\leq \frac{C}{n^{p}}
        +C\mathbb{	E}	\sup_{s\leq t}\left( \exp(-p\left\|\zeta \right\| )	\|U_q(t)-\bar{u}_q^{n}(t)\|^{2p}\right). 
	\end{align}
    Remember that in Lemma \ref{exis} we  have proved that $\bar{u}^n_q(t)$ converges  in probability to the continuous stochastic process  $U_q(t)$, uniformly in $t\in \left[0,T \right] $. 
    Since $(\bar{u}^n_q(t))_{n \geq 1}$ is a Cauchy sequence, from \eqref{equa17w} we have obtained that
    \begin{align*}
		\mathbb{	E}	\sup_{s\leq t}\left( \exp(-p\left\|\zeta \right\| )	\left\| \bar{u}_q^{n}(s)-	\bar{u}_q^{m}(s)\right\| ^{2p}\right) 
		&\leq C\left( \frac{1}{n^p}+\frac{1}{m^p}\right),t\in \left[0,T \right]. ~~~~~~~~~~~~~~~~~~~~~~~~~~~~~~~~~~
	\end{align*}
    This means that
	\begin{align*}
		\sup_m\mathbb{	E}	\sup_{s\leq t}\left( \exp(-p\left\|\zeta \right\| )	\left\| \bar{u}_q^{n}(s)-	\bar{u}_q^{m}(s)\right\| ^{2p}\right)
		&\leq C\left( \frac{1}{n^p}+1\right)<\infty. 
			\end{align*}
	Then for every $p\geq 2$, for $m\to \infty$ and $t\in \left[0,T \right]$  we have
	\begin{align}\label{equa18}
		\mathbb{	E}	\sup_{s\leq t}\left( \exp(-p\left\|\zeta \right\| )	\left\| \bar{u}^n_q(s)-	U_q(s)\right\| ^{2p}\right)
		&\leq \frac{C}{n^p}.
	\end{align}
    We substitute \eqref{equa18}  in \eqref{bonqw}	and we obtain
  \begin{align*}
		\mathbb{	E}	\sup_{s\leq t}\left( \exp(-p\left\|\zeta \right\| )	\left\| 	\bar{X}_q^{n}(t)-	Y_q(t)\right\|^{2p}\right)
		&\leq \frac{C}{n^{p}}. 
	\end{align*}
 Let  $\alpha>0$,  by using Markov's inequality, for $t\in \left[0,T \right] $, we have
	\begin{eqnarray}\label{equa19}
		&&\mathbb{	P}	\left( \sup_{s\leq t} \exp(-\frac{1}{2}\left\|\zeta \right\| )	\left\| \bar{X}^n_q(s)-	Y_q(s)\right\|\geq\frac{1}{n^{\alpha}}\right) \nonumber\\
		&\leq& 
        \cfrac{	\mathbb{	E}	\left( \sup_{s\leq t} \exp(-\frac{1}{2}\left\|\zeta \right\| )	\left\| \bar{X}^n_q(s)-	Y_q(s)\right\|\right) ^{2p}}{\left( \dfrac{1}{n^\alpha}\right) ^{2p}} \nonumber\\
        &\leq&	
		\mathbb{	E}	\left( \sup_{s\leq t} \exp(-p\left\|\zeta \right\| )	\left\| \bar{X}^n_q(s)-	Y_q(s)\right\|^{2p}\right)\dfrac{n^{2p\alpha} }{1} \nonumber\\
		&\leq& C\cfrac{n^{2p\alpha}}{n^p}= \frac{C}{n^{p-2p\alpha}} \nonumber.
	\end{eqnarray}
	For ${p-2p\alpha}>1$, we have $\alpha<\frac{1}{2}$. For $ \alpha \in \left(0,\frac{1}{2}\right)$, by  choosing $p>1$,  we therefore have
	\begin{eqnarray}
	    \label{equa19a}
		\sum_n	\mathbb{	P}	\left( \sup_{s\leq t} \exp(-\frac{1}{2}\left\|\zeta \right\| )	\left\| \bar{X}^n_q(s)-	Y_q(s)\right\|\geq\frac{1}{n^{\alpha}}\right)
		&\leq C\sum_n\frac{1}{n^{p-2p\alpha}}
		<\infty. \nonumber
	\end{eqnarray}
	 Note that  C is a constant that does not depend on $n$. 
      By following \cite[pp. 210]{gyongy1998note},  for any $p\geq 2$, we obtain
	\begin{equation*}
		\sup_{t\leq T}\left| \bar{X}_q^n(t)-Y_q(t)\right| \leq \beta n^{-\alpha}.
	\end{equation*}

Furthermore, following \cite[pp. 210]{gyongy1998note} we also have  for any $p\geq 2$ 
	\begin{equation}\label{equa21}
		\sup_{t\leq T}\left| \bar{X}^n(t)-Y(t)\right| \leq \beta n^{-\alpha}.
	\end{equation}
    This is done by using the fact that
    $$Y(t):=\lim_{q\to \infty} Y_q(t) $$ satisfies \eqref{equa7A} for $t\leq \tau$ where 
    $$\tau:=\lim_{q\to \infty } \tau^q=\inf\{t\geq 0: Y(t) \notin D\}\land T, \text{ with },$$
    $$\tau^q:=\inf\{t\geq 0: Y^q(t)\notin D_q\}\land T.$$
    The proof is therefore ended. 

\section{Simulation results}
The goal here is to give a realistic application in high dimensions and also  to check that our theoretical results are in agreement with experiments.
\subsection{Applications in high dimension}
	This example aims to show  that our numerical method can be applied even in high-dimension  problems. 
	Indeed  it is well known. 
	For  flow processes within the production phase, taking  in account the uncertainty in the input reaction function  and sub-grid fluctuations, it is well known (see \cite{SebaGatam}) that the unknown
 variable $X$ satisfies the following stochastic partial differential equation (SPDE) 
\begin{eqnarray}
\label{model}
d \phi Y(t) = [B(t)Y(t) + f(t, Y(t))]dt + g(t, Y(t))dW(t),\; \,Y(0) = Y_0\;\; t\in[0,T].
\end{eqnarray}
 The unknown variable $X$ represents the random  concentration of  the mixture of oil/gas and (CO2) (or any miscible product) during the enhanced recovery in the domain $\Omega$.
 In a geothermal reservoir, the unknown variable $X$  represents the random temperature distribution  of the water/ substance injected in the ground to enhance the recovery, 
 while $X$ represents the random concentration of the pollutant in groundwater reservoir. The function $g$ is the noise intensity  and the operator  $B(t)$ is an unbounded second order  operator,  generator of an analytic semi group. More precisely,  the operator $B(t)$ is the $L^2(\Omega)$ realisation  of  the differential operator $\mathcal{B}(t)$ given by
\begin{eqnarray}
\label{model1}
\mathcal{B}(t)Y=-\nabla \cdot (\mathbf{D}  \nabla  Y)+ \nabla \cdot (\mathbf{v} (t) Y).
\end{eqnarray}
 Note that  $\mathbf{D}$  is the  diffusion tensor and  $\mathbf{v} $ the  Darcy's velocity. The velocity field  $\mathbf{v} $ is usual found by solving a system of partial differential equations  from mass conservative  equation and Darcy 's law, see \cite{ATthesis} for more details. The SPDE \eqref{model} is solved using appropriate boundary, that can be Dirichlet, Neumann or mixed of Dirichlet and Neumann boundary conditions. The function  $\phi \in [0,1]$ is the porosity function measuring the fraction of void space.
 To allow the SPDE \eqref{model} to not be degenerated, it is usual  to take $\phi \in (0,1]$, here we consider the general  realistic case  $\phi\in [0,1]$. Note that $\phi =0$ means the presence of impermeable rocks such as stones, which cannot allow fluid to pass through. 
 After spatial approximation with finite element method or finite volume \cite{ATthesis},  with boundary  conditions,  we aims up  with  the following system  of  stochastic differential-algebraic equations in high dimension
 \begin{eqnarray}
\label{modelh}
 A_\mathcal{H} d Y_\mathcal{H}(t) = [B_\mathcal{H}(t)Y_\mathcal{H}(t) + f_\mathcal{H}(t, Y_\mathcal{H}(t))]dt + g_\mathcal{H}(t, Y_\mathcal{H}(t))dW_\mathcal{H}(t),~ Y(0) = Y_0.
\end{eqnarray}
where $A_\mathcal{H}$ is a diagonal matrix depending of $\phi$ usually called mass matrix \footnote{mass lumping technique \cite{masslumping} is using if the finite element method is using}.  If $\phi \in [0,1]$ is used, $A_\mathcal{H}$ is singular.

 In our simulation, we consider the problem on $\Omega=[0,2]\times [0,2] $, $T=1$, $\mathbf{D}=100 \mathbf{I}_{2,2}$\footnote{Identity $2\times 2$ matrix} and $v(t)=0$ with mixed of Dirichlet and Neumann boundary conditions, $\Gamma_{D_1}=\{0\}\times[0,2]$, $\Gamma_N\cup \Gamma _{D_1} =\partial\Omega$  and use the following coefficients,  boundary conditions and initial solution 
 
	\begin{eqnarray}\label{equaq}
	\left \{
	\begin{array}{l}
		Y(t,x,y)=1 ~~ \text{on} ~~ [0,  T]\times\Gamma_{D_1}~~~~~~~~~~~~~~~~~~~~~~~~~~~~~~~~~~~~~~\\
		\newline\\
		-\mathbf{D}\nabla Y(t,x,y) \cdot \mathbf{n}=0 ~~\text{on} ~~ [0,  T]\times\Gamma_N~~~~~~~~~~~~~~~~~~~~~~~~~~~~~~~~~~~~~~\\
		\newline\\
		Y(0,x,y)=0 ~in~ \Omega.~~~~~~~~~~~~~~~~~~~~~~~~~~~~~~~~~~~~~~ ~~~~~~~~~~~~\\
		\newline\\
		 f(t, Y(t))= Y-\phi\\
         \newline\\
		 g(t, Y(t))=10^{-4}\phi.
		\end{array}\right.
	\end{eqnarray}
Here we assume that $(A_\mathcal{H}-h B_\mathcal{H})$  is not singular and apply our scheme \eqref{equa2b} to the equation \eqref{modelh}. Indeed if $(A _\mathcal{H}-h B_\mathcal{H})$ is singular we can  always add and subtract a typical matrix $B$ such that 
$(A_\mathcal{H}-h B)$  is not singular. This is the idea behind our numerical technique which allows to solve efficiently large systems of  (stochastic) differential-algebraic equations without solving many algebraic equations at each time step.
%
		\begin{figure}
		\centering

        \begin{minipage}{0.55\textwidth}
		\centering
		(a)\includegraphics[width=\textwidth]{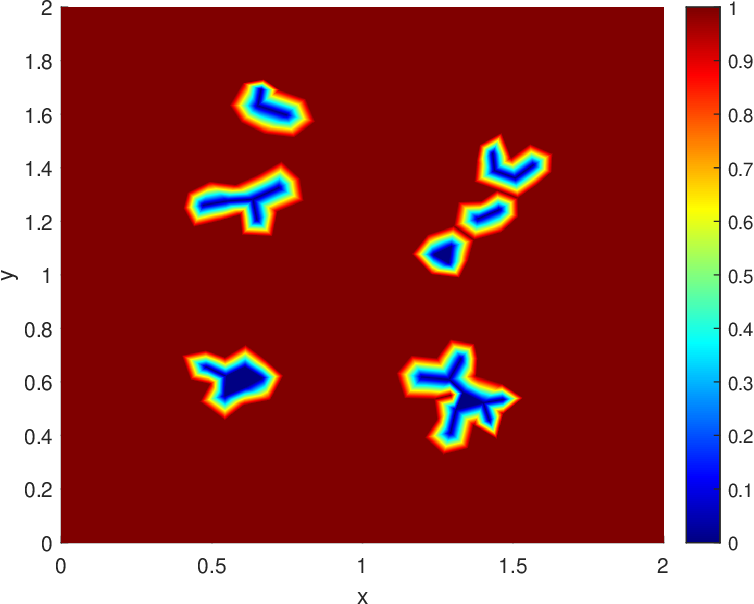}
	\end{minipage}

        	\centering
	\begin{minipage}{0.55\textwidth}
		\centering
		(b)\includegraphics[width=\textwidth]{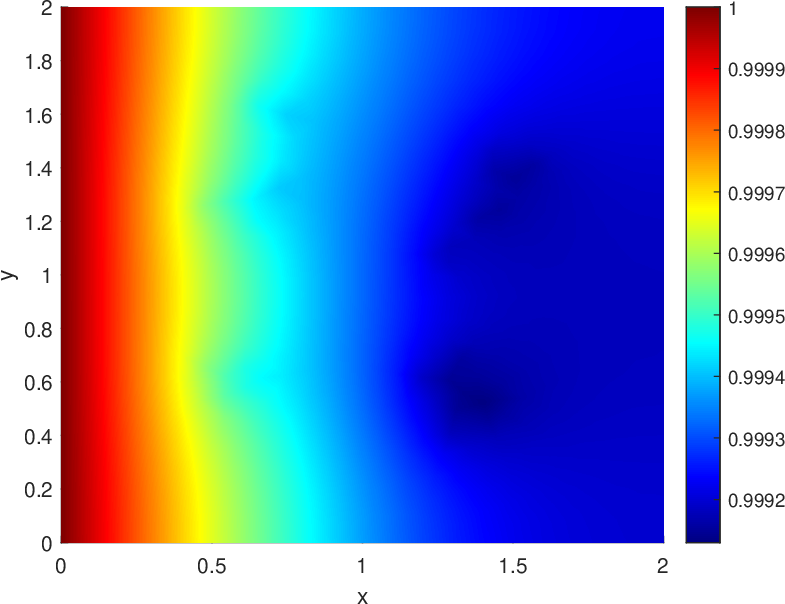}
	\end{minipage}
	\hfill
	\begin{minipage}{0.55\textwidth}
		\centering
		(c)\includegraphics[width=\textwidth]{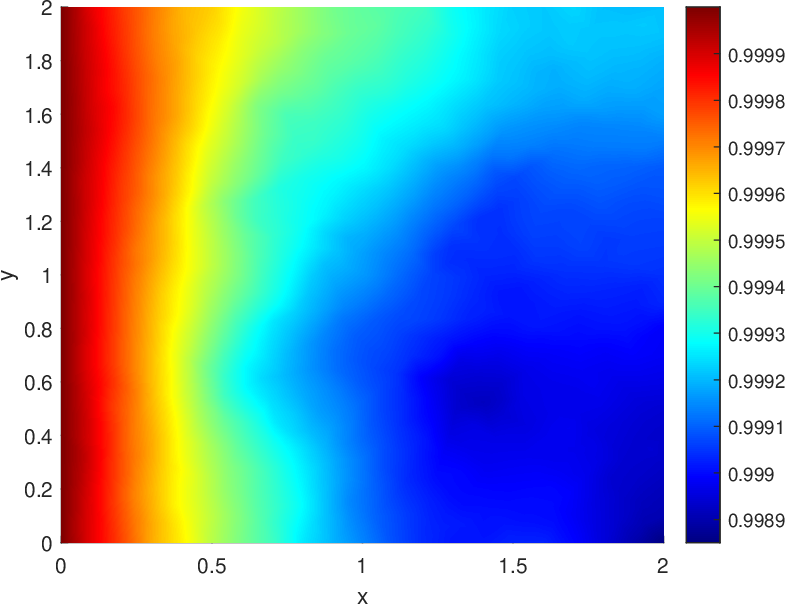}
	\end{minipage}
		\caption{Numerical simulation $ X^h $ using
			the finite element discretization for the mesh size $ 0.1 $ when the initial condition 
			$ X(0,x,y) = 0 $	and at time $ T = 1  $ (a) shows the Phi function, (b) shows the approximate solution without noise in 2D, (c) shows the approximate solution with the noise in 2D
            }
		\label{fig:grah1}
	\end{figure}
   \\
   
Figure \ref{fig:grah1} (a) we present the initial environment with impermeable rocks. Figure \ref{fig:grah1}(b) shows the propagation of the solution without noise.  We can  observe a deviation of the solution in the neighborhood of those rocks because of lower porosity.  Figure \ref{fig:grah1}  (c) illustrates the propagation  solution with  random noise with intensity $g$.  The difference between the two graphs comes from the fact that the noise acts as a source.
\subsection{Simulations in low dimension}
	The goal of this example is to check whether our theoretical convergence order for scheme \eqref{equa2b} given in Theorem \ref{theo3} is in agreement with computational order.
	 Here  $D=\mathbb{R}^3$ and consider the following index-1 SDAEs 
	\begin{equation}\label{solu1}
		\begin{pmatrix}
			1 & 0 &0  \\
			-1 & 0& t^2+1\\
			0&0&0
		\end{pmatrix}dY=f(t,Y(t))dt+g(t,Y(t))dW(t),~~ Y(0)=Y_0, ~~t\in\left[0,T \right],
	\end{equation}
	where 
	$$f(t,Y)=f(t,y_1,y_2,y_3) =\begin{pmatrix}
		-y_1-y_1^3  \\
		y_1+y_1^3+y_3\\
		y_1+y_2+y_3
	\end{pmatrix}$$ and $$ g(t,Y)=g(t,y_1,y_2,y_3)=\begin{pmatrix}
		\sqrt{2}y_1^2 & 0&0  \\
		-\sqrt{2}y_1^2+y_2 & y_1+y_3&y_1\\
		0&0&0
	\end{pmatrix}.$$
	Clearly, the matrix $A(t)=	\begin{pmatrix}
		1 & 0 &0  \\
		-1 & 0& t^2+1\\
		0&0&0
	\end{pmatrix}$ is  a singular matrix for  $t\in \left[0,T \right] $ and the functions $f$ and $g$ are local Lipschitz.
	
Let us show that the conditions from \cite[Theorem 1]{serea2025existence} are satisfied.  Indeed this will confirm that equation \eqref{solu1} has a unique solution.  We can check that the pseudo-inverse matrix $A^-$ should be
$$A^-(t)=	\begin{pmatrix}
	1 & 0 &0  \\
	0 & 0& 0\\
	\frac{1}{t^2+1}&\frac{1}{t^2+1}&0
\end{pmatrix}.$$

The projector matrix $P(t)$ is then given by $$P(t)=A^{-}A(t)=	\begin{pmatrix}
	1 & 0 &0  \\
	0 & 0& 0\\
	0&0&1
\end{pmatrix},$$

while  the projector matrix $R(t)$  is given by $$R(t)=I_{3\times 3}-AA^-(t)=	\begin{pmatrix}
	0 & 0 &0  \\
	0 & 0& 0\\
	0&0&1
\end{pmatrix}.$$

 Using the projection decomposition $Y(t)=U(t)+v(t)$, the constraint equation is given by
\begin{align*}
	A(t)v(t)+R(t)f(U(t)+v(t))&=	\begin{pmatrix}
		1 & 0 &0  \\
		-1 & 0& t^2+1\\
		0&0&0
	\end{pmatrix}	\begin{pmatrix}
	v_1(t)  \\
	v_2(t)\\
v_3(t)
	\end{pmatrix}+\begin{pmatrix}
	0 & 0 &0  \\
	0 & 0& 0\\
	0&0&1
	\end{pmatrix}\\
    &\times\begin{pmatrix}
	-(u_1+v_1)-(u_1+v_1)^3  \\
	(u_1+v_1)+(u_1+v_1)^3+(u_3+v_3)\\
	(u_1+v_1)+(u_2+v_2)+(u_3+v_3)
	\end{pmatrix}\\
	&=\begin{pmatrix}
		v_1(t)   \\
		-v_1(t)+(t^2+1)v_3(t)\\
		v_1(t)+u_1(t)+v_2(t)+u_2(t)+v_3(t)+u_3(t)
	\end{pmatrix}\\
	&=\begin{pmatrix}
		0   \\
		0\\
	0
	\end{pmatrix}.
\end{align*}
Then we obtain 
$$v_1(t)=0\;\;\; v_3(t)=0, \;\;\;\ v_2(t)=-u_1(t)-u_3(t)-u_2(t).$$
Consequentially, our constraint variable is not dependent on the noise and it is globally solvable. This means that equation \eqref{solu1} is an index-1 SDAEs.
 
More precisely, we can show that the Jacobian matrix $J$ of the constraint equation with respect to the variable $v$ is a nonsingular matrix and the norm of its inverse is bounded. Indeed  we have
$$J(t)=\begin{pmatrix}
	1 & 0 &0  \\
	-1 & 0& t^2+1\\
	1&1&1
\end{pmatrix}$$ with $$\det(J(t))=-(1+t^2)\ne 0, \text{ for all } t\in [0,T].$$

 Let us now   prove that the function $f(\cdot, \cdot)$ and $g(\cdot, \cdot)$ satisfy the monotone condition   with respect the variable $y$, that is 
$$\left\langle (P(t)Y)^T,A(t)^-f(t,Y)\right\rangle +\dfrac{1}{2}\left|A^-(t)g(t,Y) \right|^2_F \leq k(1+\left\|Y \right\|^2 ),~t\in \left[ 0, T \right], Y\in \mathbb{R}^3.$$
Indeed  we have
\begin{align*}
\left\langle (P(t)Y)^T,A(t)^-f(t,Y)\right\rangle&=\begin{pmatrix}
	y_1(t)  \\
	0\\
   y_3(t)
	\end{pmatrix} \begin{pmatrix}
	-y_1(t)-y_1^3(t)   \\
	0\\
	\frac{y_3}{t^2+1}
	\end{pmatrix}~~~~~~~~~~~~~~~~~~~~~~~~~~~~~~\\
	&=-y_1^2(t)-y_1^4(t)+\frac{y_3^2(t)}{t^2+1},
\end{align*}
with
\begin{align*}
	A^-(t)g(t,Y)&=\begin{pmatrix}
		\sqrt{2}y_1^2(t) & 0 &0  \\
		0&0&0\\
		\frac{y_2(t)}{1+t^2} & \frac{y_3(t)+y_1(t)}{1+t^2}& \frac{y_1(t)}{1+t^2}\\
	\end{pmatrix}\\
	|A^-(t)g(t)|^2&=2y_1^4+\frac{y_2^2}{(t^2+1)^2}+\frac{(y_3+y_1)^2}{(t^2+1)^2}+\frac{y_1^2}{(t^2+1)^2}~~~~~~~~~~~~~~~~~~~~~~~~~~~~~~~~~~~~\\
	&\leq 2y_1^4+\frac{y_2^2}{(t^2+1)^2}+\frac{2y_1^2+2y_3^2}{(t^2+1)^2}+y_1^2.
\end{align*}
Therefore for $Y\in \mathbb{R}^3$ and $t\in \left[ 0, T \right]$, we finaly have 
\begin{align*}
\left\langle (P(t)Y)^T,A(t)^-f(t,Y)\right\rangle +\dfrac{1}{2}\left|A^-(t)g(t,Y) \right|^2_F& \leq \frac{y_2^2}{2(t^2+1)^2}+\frac{y_1^2+y_3^2}{(t^2+1)^2}+\frac{y_3^2(t)}{1+t^2}\\
& \leq \frac{y_2^2}{2(t^2+1)}+\frac{y_1^2+y_3^2}{t^2+1}+\frac{y_3^2(t)}{1+t^2}\\
&\leq\frac{4}{2(t^2+1)} (1+\|Y\|^2).
\end{align*}

 Consequently, the solution $Y(\cdot)$  of equation \eqref{solu1}  exists and belongs to $\mathcal{M}^2(\left[0,T \right], \mathbb{R}^2 )$.
For our numerical simulation, we take $N=2^{18}$ , $T=1$ and $Y_0=(1,-2,1)$. 
	
	\begin{figure}[h]
	\centering
	\includegraphics[width=1\linewidth]{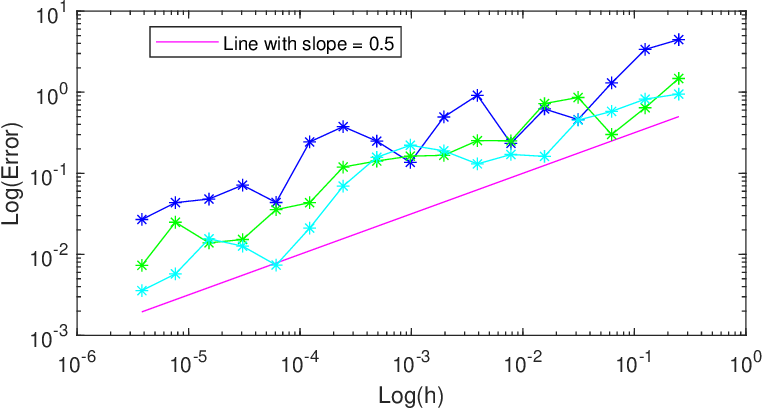}
	\caption{Pathwise convergence with our semi-implicit numerical method for three samples.}
	\label{fig:eroorr}
\end{figure}
	We  can observe from Figure \ref{fig:eroorr}, the error's curves  in log scale are not straight, this is due to the fact that the bounded constant $\beta$ in Theorem \ref{theo3} is a random variable. The convergence rates for these three different samples are 0.4010, 0.4344, and  0.4918. These three orders of convergence confirm our theoretical result  in Theorem \ref{theo3}.
	

	
\newpage
\textbf{Competing Interests:}
The authors declare that they have no competing interests.





\begin{thebibliography}{00}	
\bibitem{SebaGatam}
Geiger S., Lord G.,  Tambue A.
\newblock Exponential time integrators for stochastic partial differential equations in 3{D} reservoir simulation.
\newblock {\em Computational Geosciences}, 16(2),  323 -- 334 (2012).

\bibitem{ATthesis}
Tambue  A.,
\newblock {\em Efficient Numerical Schemes for Porous Media Flow}.
\newblock PhD thesis, Department of Mathematics, Heriot--Watt University, (2010).


\bibitem{serea2025existence}
Serea, Oana-Silvia and Tambue, Antoine and Tsafack, Guy,
  \newblock {\em Existence and uniqueness for the solutions of non-autonomous stochastic differential algebraic equations with locally Lipschitz coefficients},
  
\newblock Nonlinear Differential Equations and Applications NoDEA,
  32(4), 1--27,
  Springer (2025)


		
\bibitem{Koeden} 
P. E.  Kloeden  and A. Neuenkirch, 	
\newblock The pathwise  convergence of approximation schemes  for Stochastics differenetial equations. 
\newblock LMS Journal of Computation and Mathematics, Volume 10,  235 - 253 (2007).

\bibitem{Jentzen} 
A. Jentzen, P. E. Kloeden and A. Neuenkirch, 
 \newblock Pathwise Convergence of Numerical Schemes for Random and Stochastic Differential Equations.
 \newblock Foundations of Computational Mathematics, Hong Kong  , pp. 140 - 161,
 DOI: https://doi.org/10.1017/CBO9781139107068.006 (2008).
 	

	
\bibitem{Renate2003} 	
Winkler, Renate,
	\newblock	Stochastic differential-algebraic equations of index 1 and applications in circuit simulation.
    \newblock Journal of computational and applied mathematics, v157, 477-505, Elsevier, (2003).
	
	\bibitem{HUT}
\newblock Martin Hutzenthaler, Arnulf Jentzen  and Peter Kloeden, Strong convergence of an explicit numerical method for SDEs with nonglobally Lipschitz continuous coefficients.  
\newblock The Annals of Applied Probability 22(4), (2012).
	
	
	
	
	
		\bibitem{mukam2015} 	
        Mukam, Jean Daniel,
		\newblock Stochastic Calculus with Jumps Processes: Theory and Numerical Techniques. 
        \newblock Master Thesis, AIMS Senegal, arXiv preprint arXiv:1510.01236, (2015).
	
	
	\bibitem{Mao2007} 	
    Mao, X.
		\newblock Stochastic differential equations and applications, Elsevier, (2007).

	\bibitem{kupper2012runge} 	
    K{\"u}pper, Dominique and Kv{\ae}rn{\o}, Anne and R{\"o}{\ss}ler, Andreas,
    \newblock A Runge-Kutta method for index 1 stochastic differential-algebraic equations with scalar noise.
    \newblock BIT Numerical Mathematics,	V52, 437-455, Springer, (2012).
		
	\bibitem{kupper2015stability} 		
    K{\"u}pper, Dominique and Kv{\ae}rn{\o}, Anne and R{\"o}{\ss}ler, Andreas \newblock Stability analysis and classification of Runge--Kutta methods for index 1 stochastic differential-algebraic equations with scalar noise.
    \newblock Applied Numerical Mathematics, V96, 24-44, Elsevier, (2015).
	
	
	\bibitem{gear1988differential} 	Gear, Charles William
	\newblock Differential-algebraic equation index transformations.
    \newblock SIAM Journal on Scientific and Statistical Computing, V9, 39-47, SIAM, 	(1988).
	
	\bibitem{rabier2002theoretical} 	Rabier, Patrick J and Rheinboldt, Werner C
	\newblock Theoretical and numerical analysis of differential-algebraic equations.
	\newblock Elsevier, (2002).
	
	\bibitem{kunkel2006differential} 	Kunkel, Peter,
	\newblock Differential-algebraic equations: analysis and numerical solution,  	\newblock European Mathematical Society, V2,  (2006).
	
	\bibitem{riaza2008differential} 	Riaza, Ricardo
    \newblock Differential-algebraic systems: Analytical aspects and circuit applications.
    \newblock World Scientific, (2008).

	
	\bibitem{denk2008efficient} 	Denk Georg , R{\"o}misch; Werner  Sickenberger Thorsten; Winkler Renate 	
    \newblock Efficient transient noise analysis in circuit simulation. 	\newblock Springer, (2008).
	
	\bibitem{Khasminskii} 	 Khasminskii R. Z.
	 \newblock Stochastic Stability of Differential Equations. 
		\newblock springer, (1980).

	\bibitem{gyongy1998note} 	Gy{\"o}ngy, Istv{\'a}n 
    \newblock A note on Euler's approximations.
    \newblock Potential Analysis,	V8, 205-216, Springer, (1998).
	

	
	\bibitem{luo2011generalised} Luo, Qi and Mao, Xuerong and Shen, Yi 
    \newblock Generalised theory on asymptotic stability and boundedness of stochastic functional differential equations. 
    \newblock Automatica, V47, 2075-2081, Elsevier, (2011).
	
	
	\bibitem{Nguyen2010} 	Cong, N. Dinh and Nguyen, T.  T.
    \newblock Stochastic differential algebraic equation of index 1.
    \newblock Vietnam J. Math., V38, 117-131, (2010).
	
	\bibitem{masslumping} Yannis Voet, Espen Sande and Annalisa Buffa.  
    \newblock A mathematical theory for mass lumping and its generalization with applications to isogeometric analysis, 
    \newblock Volume 410, 15 May 2023, 116033 (2023).
\end{thebibliography}
\end{document}